\newtheoremstyle{myremark}     {10pt}{10pt}{}{}{\bfseries}{.}{.5em}{}
\newtheoremstyle{myremark}     {10pt}{10pt}{}{}{\bfseries}{.}{.5em}{}
\newtheorem{thm}{Theorem}[section]
\newtheorem{lem}[thm]{Lemma}
\newtheorem{prop}[thm]{Proposition}
\theoremstyle{definition}
\newtheorem{defn}[thm]{Definition}
\newtheorem{exmp}[thm]{Example}
\theoremstyle{myremark}
\newtheorem{rem}[thm]{Remark}
 \newcommand{\eps}{\varepsilon}
 \newcommand{\ri}{\rightarrow}
 \newcommand{\F}{\mathbb{F}}
 \newcommand{\N}{\mathbb{N}}
 \newcommand{\Z}{\mathbb{Z}}
 \newcommand{\E}{\mathbb{E}}
 \newcommand{\R}{\mathbb{R}}
 \newcommand{\abs}[1]{\left\vert#1\right\vert}
 \newcommand{\norm}[1]{\left\Vert#1\right\Vert}
\DeclareMathOperator{\id}{id}
\DeclareMathOperator{\tr}{tr}
\newcommand{\CP}{\mathcal{P}}
\begin{document}

\title{Convergence of noncommutative spherical averages for actions of free groups}

\author[Bikram]{Panchugopal Bikram}
\address{\hskip-\parindent
Panchugopal Bikram, School of Mathematical Sciences, National Institute of Science Education and Research,  Bhubaneswar, An OCC of Homi Bhabha National Institute, Jatni -  752050, India.}

\email{bikram@niser.ac.in}



\subjclass[2020]{46L10, 46L36, 37A30 $($Primary$)$}

\keywords{von Neumann algebras, ergodic theory}

\begin{abstract}
 In this article,  we   extend the Bufetov   pointwise ergodic theorem for  spherical averages of even radius for free group actions on noncommutative $L\log L$-space. Indeed, we  extend it  to  more general Orlicz space $L^\Phi(M, \tau)$ (noncommutative/classical), where $M$  is the semifinite von Neumann algebra with faithful normal semifinite  trace $\tau$ and  $\Phi: [0, \infty ) $ is  a Orlicz function such that $ [0, \infty ) \ni t\ri \left({\Phi(t)}\right)^{  \frac{1}{p}}$ is convex for some $p >1$. To establish this convergence we follow similar approach as Bufetov (\cite{Bu02}) and  Anantharaman-Delaroche (\cite{Ad06}).  Thus, additionally we obtain Rota theorem on the same noncommutative Orlicz space by  extending the  earlier work of Anantharaman-Delaroche. In \cite{Ad06}, author proved Rota theorem  for noncommutative $L^p$-spaces for $p >1$,  and mentioned as ``interesting open problem''   to extend it to  noncommutative $L\log L$-space as classical case. In the  end we also look at the  convergence of  averages of spherical averages associated to free group and free semigroup actions on  noncommutative spaces. 
\end{abstract}

\maketitle

\section{Introduction}\label{Sec:Intro}
This article is dedicated  to study ergodic theorems of the averaging operators associated to actions of  free group or free semigroup in noncommutative spaces (quantum setting). \\
The origin of ergodic theory on classical measure spaces traces back to the 1930s due to Birkhoff and von Neumann.  Since then, the subject went through substantial evolution.
There are numerous  ergodic theorems for the  actions of various  different groups on classical measure spaces (cf. \cite{anantharaman2010ergodic}, \cite{calderon1953general}). For example in  
\cite{Lindenstrauss2001}, Lindenstrauss proved  pointwise ergodic theorems for tempered F\o lner sequences, which is considered as the generalisation of Birkhoff's ergodic theorem for actions of second countable, amenable groups. 
 
The noncommutative analogue of the ergodic theorems first appeared in the works of Lance (\cite{Lance1976}) in 1976. In (\cite{Lance1976}), the author established a pointwise ergodic theorem for the averages of a single automorphism in von Neumann algebra that preserves a faithful, normal state.
 These results are then generalised by Kummerer, Conze, Dang-Ngoc for positive maps in the von Neumann algebra setting and many others (cf. \cite{Ku78}, \cite{Conze1978} and references therein) .\\
 In the seminal works \cite{Yeadon1977} and \cite{Yeadon1980}, Yeadon obtained maximal ergodic theorems for actions of positive subtracial, subunital maps on the preduals of semifinite von Neumann algebras and proved associated pointwise ergodic theorems to the noncommutative $L^1$-spaces.\\
 Later on Junge and Xu in \cite{JX07} in 2007, extended Yeadon's weak type  $(1,1)$ maximal ergodic inequality  in  the noncommutative $L^p$-spaces using interpolation techniques and  used it to prove an individual ergodic theorem for Dunford-Schwartz  operator in noncommutative $L^p$-spaces for $1< p < \infty$. \\
 In  \cite{Hong2021}, Hong, Liao and Wang made  a significant breakthrough by  obtaining  several ergodic theorems on noncommutative $L^p$-spaces for the actions of locally compact group metric measure spaces satisfying doubling conditions and compactly generated groups of polynomial growth in noncommutative $L^p$-spaces for $1< p < \infty$. 
 Furthermore, these results are extended to the general setting of amenable groups in \cite{cadilhac2022noncommutative}.  \\
It is worth mentioning here that both these articles deal with actions preserving a faithful, normal, semifinite trace. However, in \cite{JX07}, the authors studied maximal inequalities for ergodic averages in Haagerup's $L^p$-spaces ($1< p < \infty$) associated to state preserving actions of the group $\Z$ and semigroup $\R_+$. However,  ergodic theorems for state preserving actions in the endpoint case ($p=1$) was not studied until \cite{Bik-Dip-neveu}, \cite{BS23}. In, \cite{PgD24maximal-inequality}, authors obtained similar ergodic theorems for state preserving actions of locally compact group  metric measure space satisfying doubling condition  in noncommutative $L^1$-spaces.


In this article our main focus is to study 
the ergodic convergence of spherical averages  and averages of spherical averages associated with the actions of free group or semigroup  $\F_m$ of $m$-generators. 
The study of ergodic theorems for actions of groups that are not amenable was first initiated in the classical setting by Arnold and Krylov in \cite{AK63}. After that, it was carried out in great detail by many authors. We refer to \cite{grigorchuk1986individual}, \cite{Gui69}, \cite{Ne94}, \cite{NS94} and references therein. \\

The ergodic theorem for actions by arbitrary countable groups were established by V. I. Oseledets (\cite{Os65}) in the following setting.\\
Let $ G$ be a  countable  group and $(X, \nu)$ be a probability measure space. For each $g \in G$, let $ T_g: X\ri X$ be a measure preserving transformation. Suppose $ \mu$ be a probability measure on $G$ with $ \mu( g)= \mu( g^{-1})$ for all $ g \in G$ and let $ \mu^{ (n)}$ denotes the $n$-th convolution of the measure $\mu$. Then Oseledetes ergodic theorem says that the averages 
$$  A_{ 2n}(f) = \sum_{ g \in G} \mu^{ (2n)}(g) T_g\circ f, \text{ for } f \in L\log L( X, \nu), $$
converges pointwise almost everywhere. Then in 1969 in \cite{Gui69},  the author considered free group $\F_m$ and its action $(T_g)$ on a measure space $ (X, \nu)$, then proved that the following spherical average
$$ S_n (f)= \frac{1}{2m ( 2m-1)^{n-1}} \sum_{ g \in \mathbb{S}_n } T_g\circ f \text{ for } f \in L^2(X, \nu), $$
converges in $L^2$-norm to an $ \F_m^{(2)}$-invariant function,, where $\mathbb{S}_n $ denotes the set of all words of length $n$.

Later, Nevo and Stien  $($see \cite{NS94}, \cite{Ne94}$)$ considered the uniform spherical averages for measure-preserving actions of free groups and proved that the even-average converges pointwise for $L^p$ spaces, for $p>1$. Moving from uniform spherical average to non-uniform was challenging, which is demonstrated by Bufetov in his remarkable work \cite{Bu02}, which generalized the Nevo and Stien theorem \cite{NS94} for the Orlicz space $L\log L$, which includes all $L^p$, $p>1$. The proof uses the Markov operator approach and a suitable maximal inequality.
In fact  in  this setting, in 1994 Nevo and Stein obtained the following.

\begin{thm}[Nevo and Stein \cite{NS94}] Suppose $ p > 1 $, then for all $ f \in L^p(X, \nu)$, the sequence $(  S_{2n}(f)) $ converges pointwise almost everywhere in $L^p(X, \nu)$ to an $\F_m^{ (2)}$-invariant function. 
\end{thm}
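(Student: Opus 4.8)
The plan is to combine the $L^2$ spectral analysis on the tree with a maximal inequality, and then to invoke the Banach principle. Write $\sigma = S_1 = \frac{1}{2m}\sum_{|g|=1} T_g$ for the averaging operator over the $2m$ generators. Since the generating set is symmetric, $\sigma$ is a self-adjoint contraction on $L^2(X,\nu)$ with spectrum in $[-1,1]$, and it is positivity preserving and an $L^\infty$-contraction, i.e.\ a symmetric Markov operator. The geometry of the $2m$-regular tree (the Cayley graph of $\F_m$) yields the recursion
\[ \sigma S_n = \tfrac{1}{2m} S_{n-1} + \tfrac{2m-1}{2m} S_{n+1}, \qquad n \geq 1, \qquad \sigma S_0 = S_1, \]
which exhibits each $S_n$ as a fixed polynomial $p_n(\sigma)$, the radial spherical functions of the tree. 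First I would record that the even-indexed $p_{2n}$ are polynomials in $\sigma^2$, so the analysis of $S_{2n}$ is insensitive to the $-1$ end of the spectrum produced by the bipartiteness of the tree; this is exactly why even radius is needed.

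Using the spectral theorem for $\sigma$ I would establish $L^2$-convergence: since $\|S_{2n}\|\le 1$ forces $|p_{2n}|\le 1$ on the spectrum, and $p_{2n}(\lambda)\to 0$ for $|\lambda|<1$ while $p_{2n}(\pm 1)=1$, dominated convergence in the spectral integral gives $S_{2n}\to E$ in the strong operator topology, where $E$ is the orthogonal projection onto $\ker(\sigma^2-I)$, i.e.\ onto the $\F_m^{(2)}$-invariant functions. This identifies the limit and, in particular, yields convergence on the dense subspace $L^2\cap L^p$.

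The crux is the maximal inequality
\[ \Big\| \sup_{n} |S_{2n} f| \Big\|_{L^p} \leq C_p \, \|f\|_{L^p}, \qquad 1 < p < \infty. \]
Here I would exploit that $\sigma$ is a self-adjoint Markov operator and apply Stein's maximal theorem, which bounds $\sup_k|\sigma^{k} f|$ on $L^p$ for $1<p<\infty$. The difficulty is that $S_{2n}=p_{2n}(\sigma)$ is not a pure power of $\sigma$; to pass from powers to the spherical polynomials I would use the explicit form of $p_{2n}$ coming from the recursion to split $S_{2n}$ into a main term comparable to $\sigma^{2n}$ plus a remainder, and control the remainder by a Littlewood--Paley square function $\big(\sum_n |(S_{2n+2}-S_{2n})f|^2\big)^{1/2}$, whose $L^p$-boundedness again follows from the spectral-multiplier theory available for the self-adjoint contraction $\sigma$. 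Equivalently, following Bufetov and Anantharaman-Delaroche, one realizes the even averages through a self-adjoint Markov dilation and invokes Rota's theorem, which simultaneously produces convergence on a dense set and the maximal inequality via the reverse-martingale maximal inequality.

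With the maximal inequality in hand, the Banach principle upgrades convergence on $L^2\cap L^p$ to pointwise almost everywhere convergence for every $f\in L^p(X,\nu)$, the limit being the $\F_m^{(2)}$-invariant function $Ef$. The main obstacle throughout is the non-amenability of $\F_m$: there is no F\o lner sequence and the usual covering arguments for maximal functions are unavailable, so the maximal inequality must be extracted from the spectral theory of $\sigma$ and the tree structure. The genuinely delicate step is the comparison of the spherical-function polynomials $p_{2n}(\sigma)$ with the powers $\sigma^{2n}$ to which Stein's theorem and the square-function estimates directly apply.
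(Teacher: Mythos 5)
You should note at the outset that the paper does not prove this statement: it is quoted verbatim from [NS94] as background, and the closest thing to a proof in the paper is the noncommutative generalization (Theorem \ref{Thm-Ergo-Orlicz}), which is proved by the Bufetov--Anantharaman-Delaroche route: one packages the spherical averages into powers of a single Markov operator $T$ on $\bigoplus_{i\in I}L^p$, proves Rota's theorem for $(T^*)^nT^n$ via the Markov dilation and a martingale (b.u.e.m.) argument, and then recovers $S_{2n}$ from the algebraic relation in Eq. \eqref{Relation-}. Your primary route --- spectral calculus for the self-adjoint Markov operator $\sigma=S_1$, the three-term recursion realizing $S_n=p_n(\sigma)$, $L^2$-convergence from $p_{2n}(\lambda)\to 0$ on $|\lambda|<1$, a maximal inequality from Stein's theorem plus square functions, and the Banach principle --- is genuinely different from the paper's, and is in fact the strategy of the cited Nevo--Stein paper itself. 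What the spectral route buys is a direct maximal inequality with explicit constants on $L^p$; what the Rota route buys (and why this paper adopts it) is that it survives the passage to $L\log L$, to general Orlicz spaces, and to the noncommutative setting, where the pointwise spectral calculus and scalar square functions are unavailable. Your closing remark that one can "equivalently" invoke Rota's theorem via a Markov dilation is exactly the paper's mechanism.

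Two steps in your outline are genuinely under-justified. First, the identification of $\ker(\sigma^2-I)$ with the space of $\F_m^{(2)}$-invariant functions is asserted but not proved; $S_1^2f=f$ does not formally give $T_gf=f$ for every even word $g$ without an extra argument --- the paper handles the analogous point via the identity $S_1^2\circ S_{2n}=\bigl(\tfrac{2m-1}{2m}\bigr)^2S_{2n+2}+\tfrac{2m-1}{2m^2}S_{2n}+\tfrac{1}{4m^2}S_{2n-2}$ together with strict convexity of $L^2$, and you need some version of this. Second, and more seriously, the decomposition of $S_{2n}=p_{2n}(\sigma)$ into "a main term comparable to $\sigma^{2n}$ plus a remainder" cannot work as literally stated: near $\lambda=1$ the polynomial $p_{2n}(\lambda)$ decays like $t_+(\lambda)^{2n}$ with $t_+(\lambda)\approx 1-\tfrac{m}{m-1}(1-\lambda)$, so $p_{2n}(\lambda)$ and $\lambda^{2n}$ have different exponential rates and their difference is not summable or square-summable in any useful uniform sense. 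The actual Nevo--Stein comparison is with Ces\`aro-type averages of the $S_k$ (whose maximal function is controlled by Stein's theorem), and the square function one must bound is chosen so that the corresponding scalar multiplier is uniformly bounded on the spectrum \emph{and} expressible through Littlewood--Paley--Stein theory for the symmetric Markov operator $\sigma$; this is the entire technical content of the theorem and is deferred rather than executed in your proposal. As a plan it points at the right tools and correctly locates the difficulty, but it is not yet a proof.
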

After a while in 2002,  Bufetov proved the following result using the Rota's theorem. Indeed, Bufetov proved it in more general setting which we discuss in the sequel.  
\begin{thm}[Bufetov \cite{Bu02}] Let $ f \in L \log L( X, \nu) $, then the sequence $(  S_{2n}(f)) $ converges pointwise almost everywhere in $L^1(X, \nu)$ to an $\F_m^{ (2)}$-invariant function. 
\end{thm}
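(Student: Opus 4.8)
The plan is to follow Bufetov's strategy: relate the spherical averages to the powers of a single symmetric Markov operator, obtain a.e.\ convergence together with a maximal inequality for those powers via Rota's theorem, and then transfer this to $S_{2n}$ and finish by a Banach principle. First I would introduce the one-step averaging operator
$$ P := S_1 = \frac{1}{2m}\sum_{g\in\mathbb{S}_1} T_g, $$
the sum running over the $2m$ generators and their inverses. Since each $T_g$ preserves $\nu$ and $\mathbb{S}_1$ is symmetric, $P$ is a doubly stochastic (Markov) operator that is moreover self-adjoint on $L^2(X,\nu)$, because $T_g^{*}=T_{g^{-1}}$. This self-adjointness is the key structural point: it gives $P^{2n}=(P^{*})^{n}P^{n}$, which is exactly the form to which Rota's theorem applies. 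Rota's dilation realizes $(P^{*})^{n}P^{n}$ as a conditional expectation onto a decreasing family of $\sigma$-algebras on an enlarged probability space, so that $(P^{2n}f)$ becomes a reversed martingale; consequently $P^{2n}f$ converges a.e.\ and satisfies a weak-type maximal inequality, already for $f\in L^1(X,\nu)$.

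Next I would build the combinatorial bridge between the spherical averages and the powers of $P$. The tree structure of $\F_m$ yields the three-term recursion
$$ P\,S_n = \frac{1}{2m}\,S_{n-1} + \frac{2m-1}{2m}\,S_{n+1}, \qquad n\ge 1,\quad S_0=\mathrm{id},\ S_1=P, $$
so that $S_n=\phi_n(P)$ for the sequence of ``spherical'' polynomials $\phi_n$ determined by this recursion, with $\phi_{2n}$ even. Encoding the recursion in the generating function
$$ \sum_{n\ge0}\phi_n(x)\,z^n = \frac{(2m-1)-xz}{(2m-1)-2m\,xz+z^2} $$
exhibits $S_n$ as a resolvent-type expression in $P$. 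The plan is to use this representation, together with an Abel summation as $z\to1$, to dominate the spherical maximal function $S^{*}f:=\sup_n\abs{S_{2n}f}$ by the maximal function furnished by Rota for $P$, and thereby to establish a maximal inequality of the form $S^{*}\colon L\log L(X,\nu)\to L^{1,\infty}(X,\nu)$.

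The last step is the Banach principle. On the dense subspace $L^\infty\subset L\log L(X,\nu)$ the sequence $(S_{2n}f)$ already converges a.e.\ to an $\F_m^{(2)}$-invariant function by the Nevo--Stein theorem stated above (valid for $f\in L^p$, any $p>1$, hence for bounded $f$); the maximal inequality from the previous step then upgrades this to a.e.\ convergence for every $f\in L\log L(X,\nu)$. Spectrally the identification of the limit is transparent: $P$ is self-adjoint with spectrum in $[-1,1]$, one has $\phi_{2n}(\pm1)=1$ and $\phi_{2n}(x)\to0$ for $x\in(-1,1)$, so $S_{2n}=\phi_{2n}(P)$ converges strongly to the spectral projection of $P$ onto the eigenvalues $\pm1$, whose range is the space of $\F_m^{(2)}$-invariant functions.

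I expect the main obstacle to be the transfer in the second paragraph. Expanding $\phi_{2n}$ naively into powers of $P$ produces coefficients whose total variation grows exponentially in $n$, so one cannot simply add up the maximal bounds for the individual $P^{2k}$. The genuine work is to organize the resolvent/Abel representation so that the reversed-martingale structure, and with it the maximal control, is preserved through the passage from $P^{2n}$ to $S_{2n}$; the unavoidable logarithmic loss incurred in this passage is precisely what forces the hypothesis $f\in L\log L(X,\nu)$ rather than merely $f\in L^1(X,\nu)$.
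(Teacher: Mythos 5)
There is a genuine gap, and it sits exactly where you predict: the transfer of the maximal control from the powers of $P=S_1$ to the spherical averages $S_{2n}$. Writing $S_{2n}=\phi_{2n}(P)$ and trying to dominate $\sup_n\abs{S_{2n}f}$ via an Abel/resolvent representation of the polynomials $\phi_n$ is essentially the Nevo--Stein spectral route, and it is precisely the route that stops at $L^p$, $p>1$: the coefficients of $\phi_{2n}$ in the powers of $P$ have exponentially growing total variation, and no summation procedure is known (or, by the nature of Bufetov's result, needed) that preserves the reversed-martingale structure through this expansion. Bufetov's actual mechanism --- which is the one this paper adapts to the noncommutative setting --- avoids the single self-adjoint operator $P$ on $L^1(X,\nu)$ altogether. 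One passes to the $2m$-fold direct sum $\bigoplus_{i\in I}L^1(X,\nu)$ with $I=\{-m,\dots,-1,1,\dots,m\}$ and the Markov operator $T$ built from the transition matrix $p_{ij}=\frac{1}{2m-1}$ for $j\neq -i$ (Eq.~\eqref{Eq:DirectSumOperator}); $T$ is \emph{not} self-adjoint, but with the coordinate-flip unitary $U$ one has $UTU=T^*$ and the \emph{exact} identities of Eq.~\eqref{Relation-},
\begin{align*}
(T^*)^{n}T^{n}&=\tfrac{2m-2}{2m-1}\,UT^{2n-1}+\tfrac{1}{2m-1}\,(T^*)^{n-1}T^{n-1},\\
T^{2n-1}&=\tfrac{2m-1}{2m-2}\,U(T^*)^{n}T^{n}+\tfrac{1}{2m-2}\,U(T^*)^{n-1}T^{n-1}.
\end{align*}
Since $S_n(x)=\frac{1}{2m}\sum_{i\in I}T^n(\tilde x)_i$ for $\tilde x=(x,\dots,x)$ (Lemma~\ref{Lem:DiagonalOperatorTn}), convergence of the Rota products $(T^*)^nT^n$ transfers to $S_{2n}$ with bounded coefficients and no logarithmic loss. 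This algebraic identity is the missing idea in your proposal; without it, step two of your plan cannot be completed.

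A secondary but substantive error: Rota's theorem does \emph{not} give a.e.\ convergence of $P^{2n}f=(P^*)^nP^nf$ for all $f\in L^1(X,\nu)$. In the dilation $(P^*)^nP^n=\widehat{\mathbb{E}}\circ\mathbb{E}_n$, the decreasing martingale $\mathbb{E}_nf$ does converge a.e.\ for $f\in L^1$, but composing with the fixed (non-nested) conditional expectation $\widehat{\mathbb{E}}$ requires domination of $\sup_n\abs{\mathbb{E}_nf}$ in $L^1$, i.e.\ Doob's inequality $\norm{\sup_n\abs{\mathbb{E}_nf}}_1\lesssim \norm{f}_{L\log L}$. So the hypothesis $f\in L\log L$ is consumed already by Rota's theorem itself, not by the passage from $P^{2n}$ to $S_{2n}$ as your last paragraph asserts; in Bufetov's argument that passage is an exact identity and costs nothing. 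Finally, the identification of the limit as the $\F_m^{(2)}$-invariant conditional expectation is fine in outline, but in the paper's treatment it is carried out via the identity $S_1^2\circ S_{2n}=\bigl(\tfrac{2m-1}{2m}\bigr)^2S_{2n+2}+\tfrac{2m-1}{2m^2}S_{2n}+\tfrac{1}{4m^2}S_{2n-2}$ and strict convexity of $L^2$ on a dense subspace, followed by an approximation argument, rather than by spectral calculus for $P$ on all of $L\log L$.
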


In the quantum setting, the free group action  was first considered by Walker in \cite{Walk:1997}.  To discuss his result we need to set the background. \\
Suppose the free group  $\F_m$  is  generated by  
$\{ a_1, a_2, \cdots, a_m \}$ and $M$ be a  von Neumann algebra. For $ 1 \leq i \leq m $,  let  $ \alpha_i : M \ri M $ be a normal  automorphism  corresponding  to generator $a_i$ and we set $ \alpha_{-i} = \alpha_i^{-1} $.
Let $ \omega \in \F_m $ and   $ \omega = a_{i_1} a_{i_2}\cdots a_{i_n}$, we consider
$$ \alpha_\omega = \alpha_{i_n} \circ \cdots \circ \alpha_{ i_1}.$$
Suppose $$ \mathbb{S}_n = \{ \omega \in \F_m :  \abs{ \omega} = n \},$$
where $ \abs{\cdot }$ denotes the length function. Then the averaging operator $S_n$ on the sphere $\mathbb{S}_n$ is defined as 
$$ S_n = \frac{1}{\abs{\mathbb{S}_n}} \sum_{ \omega  \in \mathbb{S}_n }\alpha_\omega. $$

Further, consider $ M_n =  \frac{1}{n}\underset{ k = 0}{ \overset{ n-1}{ \sum}} S_n $ for $n \in \N$. 
We refer $S_n$ as spherical average and $M_n$ as average of spherical averages. 
In this article, we mainly study pointwise  convergence of the sequence $(M_n)$ and $(S_n)$ . 

In \cite{Walk:1997},
the author proved that if  $\rho \circ \alpha_i = \rho $ for $ 1 \leq i \leq m$ where $\rho$   is  a faithful, normal state $\rho$ on $M$, then for all $x \in M$ the sequence $M_n(x)$ will converge almost uniformly to an element  $\hat{x}$ in $M$. In fact, Walker dealt with more general maps on $M$. In particular, the author considered a sequence $(T_n)$ of completely positive maps on $M$ which preserves a faithful, normal state $\rho$ and satisfies $T_1 \circ T_n= w T_{n+1} + (1-w) T_{n-1}$ for some $w \in (1/2,1]$ with $T_0(x)=x$. Then the author  proved the almost uniform convergence of the average $\frac{1}{n} \underset{k=0}{ \overset{n-1}{\sum}} T_k$ under some natural spectral condition on $T_1$. In \cite{ CLS05}, Walker's result was extended to $L^1(M, \tau)$, where $\tau$ is a faithful, normal, semifinite trace such that $\tau(T_1(x)) \leq \tau(x)$ for all $x \in M \cap L^1(M, \tau)$ with $0 \leq x \leq 1$.

In 2006,  Anantharaman-Delaroche  extended  Nevo and Stein ergodic theorem in  the quantum setting  in \cite{Ad06}. 
Anantharaman-Delaroche obtained 
 the following results for state preserving factorable  Markov maps. For the  definition of factorable map see the Definition \ref{facto}. 
\begin{thm}[Claire Anantharaman-Delaroche \cite{Ad06}] Suppose $p > 1$ and  $(M,\varphi)$ be a noncommutative probability space.
	For each  $1 \leq  i \leq m  $,    $\alpha_i: M \rightarrow M,~~~$ is  a factorizable $\varphi$-preserving Markov operators.   Then for $ x \in L^p(M, \varphi)$, 
	
	\begin{enumerate}
		\item 
		the sequence  $\big(S_{2n}(x) \big)$ converges in  \emph{b.a.u} to $ \E^{ (2)}(x)$ in $L^p(M, \tau)$, 
		\item the sequence $\left( \frac{1}{n}\underset{k =0}{ \overset{ n}{ \sum}} S_n(x) \right)$ converges to 
		\emph{b.a.u} to $ \E^{ (2)}(x)$ in $L^p(M, \varphi)$.
	\end{enumerate} 
	where $\E^{ (2)}$ is the conditional expectation  onto  the fixed points of $ \F_m^{(2)}$.
	Moreover, if $p \geq 2$, the above averages converges in  \emph{a.u}.
\end{thm}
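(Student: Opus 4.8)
The plan is to follow the Markov-operator route of Bufetov \cite{Bu02}, replacing the classical Rota ``alternierende Verfahren'' by its noncommutative analogue for factorable maps. The starting point is the combinatorial recursion satisfied by the spherical operators. Writing $P = S_1 = \frac{1}{2m}\sum_{0<|i|\le m}\alpha_i$, where $\alpha_{-i} := \alpha_i^*$ is the $\varphi$-adjoint so that $P$ is again a $\varphi$-preserving Markov operator, a direct count of reduced words (each word of length $n$ extends to $2m-1$ reduced words of length $n+1$ and back-tracks in one way) shows
$$P S_n = \frac{2m-1}{2m}\,S_{n+1} + \frac{1}{2m}\,S_{n-1}, \qquad n\ge 1,$$
with $S_0 = \mathrm{id}$. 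Thus every $S_n$ is a Chebyshev-type polynomial in $P$, and the resulting spectral analysis of $P$ on $L^2(M,\varphi)$ already yields norm convergence of $S_{2n}$; the whole difficulty is the pointwise (a.u./b.a.u.) statement.

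The central tool is a noncommutative Rota theorem: for a factorable $\varphi$-preserving Markov operator $T$ on $M$ (Definition \ref{facto}), the alternating iterates $T^{*n}T^n$ converge b.a.u. on $L^p(M,\varphi)$ for $p>1$, and a.u. for $p\ge 2$, to the conditional expectation onto the fixed-point algebra. I would prove this exactly along the $L^p$ lines of \cite{Ad06}: factorability furnishes a dilation, i.e. an inclusion $M\subseteq \widetilde M$ into a larger probability space $(\widetilde M,\widetilde\varphi)$ equipped with a $\widetilde\varphi$-preserving automorphism and the $\varphi$-preserving conditional expectation $\E_M$, in terms of which $T^{*n}T^n$ is realised as a composition of conditional expectations onto a decreasing filtration. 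Convergence then reduces to the noncommutative reverse (decreasing) martingale convergence theorem, whose quantitative input is the noncommutative Doob maximal inequality; the endpoint behaviour of that inequality is precisely what produces a.u. convergence for $p\ge 2$ and only b.a.u. for $1<p<2$.

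The heart of the argument is Bufetov's reduction of the even spherical average to such an alternating product. Here I would build, out of the $2m$ Markov operators $\alpha_{\pm 1},\dots,\alpha_{\pm m}$, an auxiliary factorable Markov operator $R$ on an amplified algebra (morally $M\otimes \ell^\infty(\{\pm1,\dots,\pm m\})$, recording the ``last letter'' so as to forbid immediate back-tracking) and show that $S_{2n}$ is recovered as a compression of the Rota iterate $R^{*n}R^n$ up to a controllable remainder. The point of passing to even radius is that a reduced word of length $2n$ splits canonically into a forward half and a backward half whose concatenation matches the forward--backward alternation of $R^{*n}R^n$; this is the step where the non-backtracking bookkeeping must be carried out at the operator level, respecting the order of composition of the non-commuting $\alpha_i$. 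Combined with the recursion above, this yields an identity of the schematic form $S_{2n} = R^{*n}R^n + (\text{lower-order Ces\`aro remainder})$.

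Finally, I would assemble the pieces. The leading term converges by the noncommutative Rota theorem above; the remainder is dominated by a maximal ergodic inequality for the family $(S_{2n})$, obtained from the maximal inequality for $(R^{*n}R^n)$ together with summation by parts against the three-term recursion, so that the full sequence $(S_{2n}(x))$ converges b.a.u., and a.u. when $p\ge 2$. The averaged statement for $M_n = \frac1n\sum_{k=0}^{n-1} S_k$ follows from the $S_{2n}$ result by an Abel summation argument, again using the maximal inequality to pass from the even subsequence to the Ces\`aro means. That the common limit is $\E^{(2)}$, the conditional expectation onto the $\F_m^{(2)}$-fixed points, is identified first in $L^2$-norm via the mean ergodic theorem and then transported to the a.u. limit. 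I expect the genuinely hard step to be the operator-level Bufetov reduction of the third paragraph: in the classical case the commuting transformations make the forward/backward splitting transparent, whereas here one must manufacture the correct factorable dilation $R$, verify the compression identity while respecting non-commutativity, and simultaneously ensure that the remainder is truly dominated by the available noncommutative Doob/maximal inequalities.
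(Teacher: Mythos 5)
For part (1) your route is exactly the paper's route: the noncommutative Rota theorem for factorable maps (dilation via the noncommutative Markov chain construction, realization of $(T^*)^nT^n$ as $\E\circ\E_n$ with $(\E_n)$ a decreasing filtration, Doob's maximal inequality giving \emph{b.a.u.} for $p>1$ and \emph{a.u.} for $p\geq 2$), followed by Bufetov's reduction on the amplified algebra $\underset{i\in I}{\oplus}M\cong M\otimes\ell^\infty(I)$, $I=\{\pm1,\dots,\pm m\}$, with $T(x)_j=\frac{1}{2m-1}\sum_{i\neq -j}\alpha_j(x_i)$ and $S_n(x)=\frac{1}{2m}\sum_{i\in I}T^n(x,\dots,x)_i$. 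The one place you are vaguer than the paper is your ``compression identity with controllable remainder'': in the paper the identity is exact (Eq.~\eqref{Relation-}), namely $(T^*)^{n}T^n=\frac{2m-2}{2m-1}UT^{2n-1}+\frac{1}{2m-1}(T^*)^{n-1}T^{n-1}$ where $U:(y_j)\mapsto(\alpha_j(y_{-j}))$ is a self-adjoint unitary with $UTU=T^*$, so the ``remainder'' is itself a Rota iterate and converges by the same theorem; no separate maximal inequality or summation by parts is needed at that point.

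The genuine gap is in part (2). You propose to pass from the even subsequence $(S_{2n}(x))$ to the Ces\`aro means $\frac1n\sum_{k<n}S_k(x)$ by Abel summation. This forces you to control the odd spherical averages, and the only handle on them is the three-term recursion $S_1S_{2n}=\frac{2m-1}{2m}S_{2n+1}+\frac{1}{2m}S_{2n-1}$; extracting $S_{2n+1}(x)$ from it amounts to applying $S_1$ to the \emph{b.a.u.} limit of $(S_{2n}(x))$. In the noncommutative setting a positive contraction does not in general carry \emph{b.a.u.}-convergent sequences to \emph{b.a.u.}-convergent ones (the compressing projection $e$ is not respected by $S_1$), so this step does not go through as stated. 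The paper avoids it entirely: $\frac1n\sum_{k<n}S_k(x)$ is the compression of the Ces\`aro averages $\frac1n\sum_{k<n}T^k$ of the \emph{single} positive contraction $T$ on the amplified algebra, to which the K\"ummerer and Junge--Xu individual ergodic theorems apply directly (see Theorems \ref{Thm:AUconvergenceInVNA} and \ref{Thm:MultiParameterErgodicTheoremInLp}); alternatively, in \S\ref{Sec:MainResults} the averages are dominated by $C_w$ times the Ces\`aro averages of powers of $S_1$ and one invokes the Lamperti maximal inequality. You would need to replace the Abel-summation step by one of these arguments, and likewise identify the limit of the Ces\`aro means through the fixed points of $T$ rather than through the even subsequence alone.
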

 In \cite{Hu08}, Hu  also obtain  the Nevo-Stain pointwise ergodic theorem  for  spherical averages for free group actions  in noncommutative $L^p$-spaces $( p > 1)$ by generalizing the Nevo-Stein idea in the noncommutative  setting. Further, the same author in \cite{Hu09},  improved the regularity of the space and proved the  pointwise ergodic theorem  for the  spherical averages of free group action  to noncommutative $L\log^\beta L $-spaces for $ \beta \geq 2$.
 
Later on, inspired by the seminal works of Junge and Xu  (\cite{JX07}), in \cite{Hu08}, Hu obtained a maximal ergodic theorem and associated individual ergodic theorem in  $L^p(M, \tau)$ corresponding to $(T_n)$. In particular, the author considered the sequence $(T_n)$ as described above on $(M, \varphi)$, where $\varphi$ is a faithful, normal state, together with $\varphi \circ T_n \leq \varphi$ and $T_1 \circ \sigma_t^\varphi = \sigma_t^\varphi \circ T_1$. Under these assumptions, Hu proved maximal inequality and individual ergodic theorem of the average $ \frac{1}{n} \underset{ k=0}{\overset{ n}{\sum}} T_k $ in Haagerup noncommutative $L^p$-spaces for $1<p< \infty$. Furthermore, if $M$ is assumed to be semifinite with faithful, normal, semifinite trace and  $\tau(T_1(x)) \leq \tau(x)$ for all $x \in M \cap L^1(M, \tau)$ with $0 \leq x \leq 1$, then Hu recovered the result in \cite{CLS05} as mentioned above. In \cite{BS23}, authors  assumed that $\varphi \circ T_1 = \varphi$ and $\rho(  y T_1(x) ) = \rho(T_1(y) x ) $ for $ x, y \in M$ and proved individual ergodic theorem for the sequence $( \frac{1}{n} \underset{ k=0}{\overset{ n}{\sum}} T_k )$.\\

In this article, we discuss the individual convergence of the sequence $( \frac{1}{n} \underset{ k=0}{\overset{ n}{\sum}} T_k )$ in more general setting. 
Our main result in this article  is to establish  the noncommutative Bufetov pointwise ergodic theorem. Infact,  we not only prove the   Bufetov pointwise ergodic theorem for  free group actions on noncommutative $L\log L$-space, but also prove it for more general noncommutative Orlicz spaces.  Here we state our main results and for the definition of \emph{b.a.u} and \emph{a.u} see the Definition  \ref{Defn:AUconvergence}. Further, for definition of factorable map see the Definition \ref{facto}.

\begin{thm}[\textbf{ Non-commutative Bufetov ergodic theorem}]\label{Ergo-Orlicz}
	Let $M$ be a von Neumann algebra equipped with a faithful normal semfinite trace $\tau$ and
	 $\Phi: [0, \infty ) $ is  a Orlicz function such that $ [0, \infty ) \ni t\ri \left({\Phi(t)}\right)^{  \frac{1}{p}}$ is convex for some $p >1$. 
	For each  $1 \leq  i \leq m  $, suppose    $\alpha_i:(M,\tau)\rightarrow (M,\tau),~~~$ is  a factorizable $\tau$-preserving Markov operators.   Then we have the following. 
	
	\begin{enumerate}
		\item 
		For any  $x\in L^\Phi(M, \tau)$, the sequence  $\big(S_{2n}(x) \big)$ converges in  \emph{b.a.u} to $ \E^{ (2)}(x)$.
		\item In addition suppose the function $\widetilde{ \Phi } (t) = \Phi( \sqrt{t}) $ for $ t \in [1, \infty) $ is also $p$-convex. Then for  any  $x\in L^\Phi(M, \tau)$, the sequence  $\big(S_{2n}(x) \big)$ converges  in  \emph{a.u} to $ \E^{ (2)}(x)$,
	\end{enumerate}
 where $ \E^{ (2)}$ is defined prior to the  Theorem \ref{Thm-Ergo-Orlicz}. 
	In particular, $(1)$ holds for all $ x \in L \log L(M, \tau) $, i.e, the sequence $\big(S_{2n}(x) \big)$ converges in  \emph{b.a.u} for all $ x \in L \log L(M, \tau) $.
\end{thm}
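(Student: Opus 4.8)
The plan is to follow the Markov–operator strategy of Bufetov \cite{Bu02} and its noncommutative adaptation by Anantharaman-Delaroche \cite{Ad06}, upgrading the two analytic inputs from $L^p$ with $p>1$ to the Orlicz space $L^\Phi(M,\tau)$. The guiding observation is that, because the Cayley graph of $\F_m$ is a $2m$-regular tree, the even spherical averages $S_{2n}$ are not arbitrary polynomials in $S_1$ but can be rewritten as explicit convex combinations of a canonical family of \emph{Rota operators} $R_k$ attached to the factorizable dilation of the generators $\alpha_i$. Once b.a.u. (resp. a.u.) convergence of the single family $(R_k)$ is established, convergence of $(S_{2n})$ follows by an approximate-identity/summation argument, provided a maximal inequality on $L^\Phi$ is available to pass from norm control to pointwise control.

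First I would prove the Rota theorem on $L^\Phi(M,\tau)$. Using factorizability (Definition \ref{facto}), each $\alpha_i$ admits a Markov dilation: there is a larger tracial von Neumann algebra carrying a $\tau$-preserving normal faithful conditional expectation onto $M$ together with an automorphism implementing $\alpha_i$, so that the compositions $\alpha_\omega$ over even-length words assemble into a decreasing filtration of conditional expectations. The operators $R_k$ then become a \emph{reverse martingale}, and their convergence reduces to a noncommutative reverse-martingale convergence theorem on $L^\Phi$. This is exactly the step left open at the $L\log L$ endpoint in \cite{Ad06}; I would establish it for every Orlicz $\Phi$ with $(\Phi)^{1/p}$ convex by proving the corresponding Orlicz Doob-type maximal inequality, the $p$-convexity supplying the interpolation-free bound that replaces reflexivity of $L^p$.

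Next I would carry out the combinatorial reduction. Using the spherical-function recursion on the $2m$-regular tree, which expresses $S_1 S_n$ as a convex combination of $S_{n+1}$ and $S_{n-1}$, together with the factorizable representation of $S_1$, I would write $S_{2n}=\sum_k c_{n,k}R_k$ with nonnegative coefficients summing to one and with mass escaping to infinity as $n\ri\infty$. Combined with the Rota convergence $R_k\ri\E^{(2)}$ and a maximal inequality for $(S_{2n})$ on $L^\Phi$, obtained by transferring the Rota maximal inequality through these coefficients, a density argument on $M\cap L^\Phi(M,\tau)$ then yields b.a.u. convergence of $(S_{2n}(x))$ to $\E^{(2)}(x)$ for all $x\in L^\Phi(M,\tau)$, and identifies the limit as the conditional expectation onto the $\F_m^{(2)}$-fixed points. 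Since the Orlicz function of $L\log L(M,\tau)$ satisfies the convexity hypothesis for a suitable $p>1$, this specializes to the stated $L\log L$ statement.

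For part $(2)$, a.u. (two-sided) convergence requires controlling $\abs{S_{2n}(x)}$ rather than $S_{2n}(x)$ alone, which in the noncommutative setting forces a maximal inequality for the squared averages; the extra hypothesis that $\widetilde{\Phi}(t)=\Phi(\sqrt{t})$ is $p$-convex is precisely what makes the Orlicz maximal inequality available after passing to the square function, thereby upgrading b.a.u. to a.u. and recovering the $p\geq 2$ case of \cite{Ad06}. I expect the crux of the whole argument to be the Orlicz reverse-martingale maximal inequality at the $L\log L$ endpoint: the interpolation methods that give the $L^p$ result for $p>1$ break down at $L^1$, so one must exploit the $p$-convexity of $\Phi$ directly to obtain a weak-type Orlicz bound, and then track the coefficients $c_{n,k}$ carefully enough that the single-operator maximal inequality genuinely transfers to the spherical averages.
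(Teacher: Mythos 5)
Your skeleton --- reduce the even spherical averages to the Rota operators $(T^*)^nT^n$ attached to the Markov operator on $\bigoplus_{i}M$, prove a Rota theorem on $L^\Phi(M,\tau)$, then identify the limit by density --- is the same as the paper's. The genuine gap is in the central analytic step. You propose to prove the Orlicz Rota theorem by establishing an ``Orlicz Doob-type maximal inequality'' (or an unspecified ``weak-type Orlicz bound'') for the reverse martingale $A_n=\E\circ\E_n$ at the $L\log L$ endpoint. In the noncommutative setting this is precisely the step that is known to be obstructed: Hu proved that the Doob-type maximal inequality for noncommutative martingales from $L\log^{\alpha}L$ to $L^1$ holds only for $\alpha\geq 2$ and that the exponent $2$ is \emph{optimal} (this is recalled in the paper's introduction and is exactly why Anantharaman-Delaroche left the $L\log L$ case open). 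Since $\Phi(t)=t\log(1+t)$ is $p$-convex for $p=10/9$, your hypothesis class contains $L\log L$, where the strong-type inequality you would need fails; you give no precise weak-type substitute, nor a mechanism for extracting pointwise convergence from one. As written, the crux of your argument cannot be carried out.

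The paper circumvents this by proving no new maximal inequality at all. The $L^p$ maximal inequality ($p>1$) is used only to show that $(A_n)$ is bilaterally uniformly equicontinuous in measure at $0$ on $L^p$ (Theorem \ref{Thm:buem}); the new idea is the pointwise operator splitting, valid for any $p$-convex Orlicz function,
\[
x \;\leq\; x_\delta \;+\; t\,\Phi(x)^{1/p},\qquad x_\delta=\int_0^{\delta/2}\lambda\,de_\lambda,\qquad \norm{\Phi(x)^{1/p}}_p\leq \norm{x}_\Phi^{1/p},
\]
which transfers b.u.e.m.\ from $L^p$ to $L^\Phi$ (Lemma \ref{Lem:BUEMA}); pointwise convergence then follows from the noncommutative Banach principle (Theorem \ref{Thm:BanachPrincipleAu}) together with convergence on the dense set $M\cap L^1(M,\tau)$. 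Your treatment of part $(2)$ via the square function is close in spirit to the paper's use of Kadison's inequality $A_n(x)^2\leq A_n(x^2)$ combined with b.u.e.m.\ on $L^{\widetilde{\Phi}}$. Two further inaccuracies: the passage from the Rota operators to $S_{2n}$ is a fixed two-term linear identity conjugated by a unitary (Eq.\ \ref{Relation-}), not a convex combination with mass escaping to infinity; and the identification of the limit as $\E^{(2)}(x)$ is not automatic from density --- the paper needs the recursion $S_1^2\circ S_{2n}=\left(\frac{2m-1}{2m}\right)^2S_{2n+2}+\frac{2m-1}{2m^2}S_{2n}+\frac{1}{4m^2}S_{2n-2}$ together with strict convexity of $L^2(M,\tau)$ to show the limit is $\F_m^{(2)}$-invariant.
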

To prove this theorem we follow Anantharaman-Delaroche's approach and prove a noncommutative version of  Rota's ``Alternierende Verfahren" theorem. Then  use  it to establish     Bufetov ergodic theorem in noncommutative  Orlicz spaces.

In function theory, many mathematicians in the $20^{th}$ century studied the limiting theorem involving the limit of products $T_1T_2\cdots T_n f$ of a sequence of certain operators $T_n$ on a measure space $(X,\mu)$. This study of convergence is known as the ``Alternierende Verfahren" theorem. G. C. Rota \cite[Theorem 1]{Ro62} investigated the convergence of these products involving doubly stochastic operators and their adjoints on $L^p(\mu)$-spaces, where $\mu$ is a probability measure. 
\begin{thm}[\cite{Ro62}]
	Let $T$ be a $\mu$-preserving Markov operator on the probability space $(X,\mu)$. Then for every $p>1$ and $f\in L^p(X,\mu)$, the sequence $T^n (T^*)^n (f)$ converges almost everywhere.
\end{thm}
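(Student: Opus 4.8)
The plan is to reduce the almost-everywhere convergence of $T^n(T^*)^n f$ to the convergence of a reversed (backward) martingale, via the classical device of dilating the Markov operator $T$ to a stationary Markov chain. Working on a standard (Lebesgue) probability space, I would first build a probability space $(\Omega, P)$ carrying the one-sided stationary chain $(\xi_k)_{k \geq 0}$ whose one-step transition is governed by $T$; concretely $\xi_0$ has law $\mu$ and $\E[h(\xi_{k+1}) \mid \xi_0, \dots, \xi_k] = (Th)(\xi_k)$. Such a chain exists by the Kolmogorov extension theorem from the two-point couplings on $X \times X$ prescribed by $T$, which are consistent precisely because $T$ is a $\mu$-preserving Markov operator (positive, unital, with $T^*1 = 1$); stationarity of each $\xi_k$ follows from $\int Th \, d\mu = \int h \, d\mu$. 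On $\Omega$ I single out the fixed $\sigma$-algebra $\mathcal{G} = \sigma(\xi_0)$, under which $L^p(X,\mu)$ is identified with the $\mathcal{G}$-measurable functions $f(\xi_0)$, together with the decreasing family of tail $\sigma$-algebras $\mathcal{F}_n = \sigma(\xi_n, \xi_{n+1}, \dots)$.

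The heart of the argument is the dilation identity
\[
 T^n (T^*)^n f = \E\big[\, \E[\, f(\xi_0) \mid \mathcal{F}_n \,] \,\big|\, \mathcal{G} \,\big],
\]
valid under the above identification. I would verify this by two applications of the Markov property: conditioning $f(\xi_0)$ onto the future $\mathcal{F}_n$ collapses to conditioning on $\xi_n$ alone, which by time reversal (the defining duality $\langle Tf, g \rangle = \langle f, T^*g \rangle$ together with stationarity) produces $((T^*)^n f)(\xi_n)$; conditioning this back onto $\mathcal{G} = \sigma(\xi_0)$ then applies $T^n$ and returns $(T^n(T^*)^n f)(\xi_0)$. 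The inner sequence $g_n := \E[\, f(\xi_0) \mid \mathcal{F}_n \,]$ is, by construction, a reversed martingale relative to the decreasing filtration $(\mathcal{F}_n)$.

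With the identity in hand the convergence is harvested from martingale theory. The reversed martingale convergence theorem gives $g_n \to g_\infty := \E[\, f(\xi_0) \mid \bigcap_n \mathcal{F}_n \,]$ almost everywhere and in $L^p$, and since $p>1$ Doob's maximal inequality yields $g^* := \sup_n |g_n| \in L^p \subseteq L^1$. I would then push this convergence through the fixed conditional expectation $\E[\,\cdot \mid \mathcal{G}\,]$ using the conditional dominated convergence theorem, whose hypothesis is met precisely because the dominating function $g^*$ is integrable; this gives $\E[g_n \mid \mathcal{G}] \to \E[g_\infty \mid \mathcal{G}]$ almost everywhere on $\Omega$, and restricting to the $\mathcal{G}$-measurable coordinate $\xi_0 \sim \mu$ delivers the desired almost-everywhere convergence of $T^n(T^*)^n f$ on $(X,\mu)$.

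I expect the main obstacle to be the rigorous construction of the dilation and the verification that ``conditioning down the tail and back up'' reproduces exactly one pair $TT^*$ per step; this bookkeeping with the adjoint and the Markov property is Rota's essential technical input, and its operator-algebraic analogue is exactly where factorizability will be needed in the noncommutative extension pursued later in the paper. A secondary point worth flagging is the role of the hypothesis $p>1$: it is used only to guarantee $g^* \in L^1$ through Doob's inequality, and this is precisely the step that fails at the endpoint $p=1$, which explains why the borderline case forces one down to the smaller class $L\log L$.
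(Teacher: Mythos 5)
Your argument is correct and is essentially Rota's original proof, which the paper cites without reproducing: the Markov-chain dilation identity $T^n(T^*)^nf=\E\bigl[\E[f(\xi_0)\mid\mathcal{F}_n]\mid\sigma(\xi_0)\bigr]$, reversed martingale convergence for the decreasing tail filtration, and Doob's maximal inequality to justify passing the limit through the outer conditional expectation. This is also precisely the skeleton the paper transplants to the noncommutative setting (the factorization $(T^*)^nT^n=\mathbb{E}\circ\mathbb{E}_n$ of Theorem \ref{Thm:FactoMap} playing the role of your dilation identity, and Junge's Doob inequality replacing the classical one), and your closing remark about $p>1$ versus $L\log L$ matches the paper's motivation for working in Orlicz spaces.
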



Then Anantharaman-Delaroche first proved the noncommutative analog of Rota's theorem for factorizable Markov operators in noncommutative $L^p$-spaces, where $p>1$  in \cite{Ad06}. The proof relies on a noncommutative Kolmogorov construction and Junge's Doob maximal inequality for noncommutative martingales \cite{Ju02}.\\
Anantharaman-Delaroche mentioned (see \cite{Ad06}) that \emph{``the extension of Rota's theorem to noncommutative setting (noncommutative $L\log L$-spaces) is an interesting open problem"}.\\
Ying Hu later considered a factorable operator and proved Rota's theorem for noncommutative Orlicz space $L\log^\alpha L$, where $\alpha\geq 2$, by establishing a maximal inequality for noncommutative martingales from $L\log^\alpha L$ to $L^1$ \cite{Hu09}. They also showed that the exponent $2$ is optimal for that maximal inequality. Conde-Alonso et al. recently proved a similar result for a bi-parametric case involving the noncommutative Orlicz space $L\log^2 L$ and $L^1$ \cite{CGP20}. In this article, we fully establish the noncommuttaive version of Rota's theorem not only for noncommutative $L\log L$-spaces but also for more general noncommutative Orlicz spaces. Indeed, we prove the following theorem.

\begin{thm}[\textbf{Noncommutative Rota's theorem}]\label{NCRotaOrlicz}
	Let $M$ be a von Neumann algebra equipped with a faithful normal tracial state $\tau$. Suppose   $T:(M,\tau)\rightarrow (M,\tau)$ is  a factorizable $\tau$-preserving Markov operator and  $\Phi: [0, \infty ) $ is  a Orlicz function such that $ [0, \infty ) \ni t\ri \left({\Phi(t)}\right)^{  \frac{1}{p}}$ is convex for some $p >1$.
	Then  for any $x\in L^\Phi(M, \tau)$, the sequence  $\big(T^n (T^*)^n( x) \big)$ converges in \emph{b.a.u.}\\
	Furthermore, if the function $ \widetilde{ \Phi }(t) = \Phi( \sqrt{ t})$ for $ t \in [0, \infty )$, is $ p$-convex, then for all $x\in L^\Phi(M, \tau)$, the sequence  $\big(T^n (T^*)^n( x) \big)$ converges in \emph{a.u.}\\
	 In particular, , i.e, the sequence the sequence  $\big(T^n (T^*)^n( x) \big) $ converges in  \emph{b.a.u} for all $ x \in L \log L(M, \tau) $.\\
\end{thm}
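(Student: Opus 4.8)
The plan is to establish the Noncommutative Rota's theorem (Theorem \ref{NCRotaOrlicz}) by constructing an auxiliary von Neumann algebra on which the products $T^n(T^*)^n$ are realized as a martingale-type conditional expectation sequence, and then transporting a suitable maximal inequality for noncommutative martingales back to the original algebra. First I would invoke the \emph{factorizability} of the $\tau$-preserving Markov operator $T$: by Definition \ref{facto}, there exists a larger tracial von Neumann algebra $(N,\hat\tau)$ together with trace-preserving $*$-homomorphisms $J_0, J_1 : M \to N$ such that $T = J_0^* J_1$ (where $J_i^*$ denotes the adjoint with respect to the traces, i.e. the trace-preserving conditional expectation onto the image). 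This is precisely the noncommutative Kolmogorov construction used by Anantharaman-Delaroche. The key structural step is to iterate this factorization to build a single von Neumann algebra carrying an increasing and a decreasing filtration of subalgebras, so that the composite $T^n(T^*)^n$ becomes the composition $E_0 \circ E_n$ of two conditional expectations, or more precisely a product $\prod E_{\mathcal A_k}$ along a martingale filtration adapted to the tensor-power construction.

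Once the product $T^n(T^*)^n(x)$ is identified with the action of a noncommutative (reverse) martingale built from conditional expectations $(\mathbb{E}_n)$ on the ambient algebra $N$, the convergence of $T^n(T^*)^n(x)$ reduces to a martingale convergence statement. For this I would appeal to a \emph{maximal inequality for noncommutative martingales}: Junge's Doob maximal inequality gives the $L^p$ ($p>1$) bound, which recovers the Anantharaman-Delaroche result, but the whole point here is the endpoint and Orlicz refinement. So the essential input must be a maximal inequality valid on $L^\Phi(M,\tau)$, i.e. a Doob-type inequality controlling the $L^1$-norm (or the b.a.u maximal functional $\|(\cdot)\|_{\Lambda_\infty}$ in the Junge-Xu formalism) of the martingale maximal function by the $L^\Phi$-norm of the input. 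Under the hypothesis that $t \mapsto (\Phi(t))^{1/p}$ is convex for some $p>1$, the Orlicz space $L^\Phi$ embeds favorably between $L^1$ and $L^p$, and this $p$-convexity is exactly what lets one interpolate or directly adapt the martingale maximal inequality so that the maximal function lands in the weak-$L^1$/b.a.u regime; this yields b.a.u convergence for all $x \in L^\Phi(M,\tau)$, and specializing $\Phi(t) = t\log^+ t$ (which satisfies the hypothesis) gives the $L\log L$ endpoint resolving the open problem. For the a.u upgrade one strengthens to a two-sided maximal inequality; the additional hypothesis that $\widetilde\Phi(t)=\Phi(\sqrt t)$ is $p$-convex is what supplies the requisite square-function / symmetrized control needed to pass from b.a.u to a.u convergence (mirroring the $p\ge 2$ phenomenon in the Anantharaman-Delaroche theorem).

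The scheme of the argument is therefore: (i) factorize $T$ and set up the ambient filtered algebra so that $T^n(T^*)^n$ is a genuine conditional-expectation martingale; (ii) prove convergence first on the dense subset $M \subset L^\Phi(M,\tau)$ (where martingale convergence is comparatively soft, following from the normality and order-continuity of the conditional expectations); (iii) establish the Orlicz-to-$L^1$ martingale maximal inequality under the $p$-convexity hypothesis; and (iv) combine the dense convergence with the maximal inequality via the standard Banach-principle/maximal-function argument to deduce b.a.u convergence on all of $L^\Phi(M,\tau)$, and a.u convergence under the strengthened hypothesis.

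I expect the \textbf{main obstacle} to be step (iii): proving the martingale maximal inequality at the Orlicz endpoint. On noncommutative $L^p$ for $p>1$ Junge's Doob inequality is available off the shelf, but at the $L\log L$ / general Orlicz boundary there is genuine difficulty — indeed Hu \cite{Hu09} could only reach $L\log^\alpha L$ for $\alpha\ge2$ and showed $\alpha=2$ is optimal for the general noncommutative martingale maximal inequality, while Conde-Alonso et al.\ \cite{CGP20} needed a bi-parametric $L\log^2 L$ statement. The reason one can nonetheless reach the full $L\log L$ (and the sharp $p$-convexity class) here is that the martingales arising from the Rota construction are not arbitrary: they carry extra structure (they are generated by a single factorizable map, hence are in a strong sense \emph{commuting} or \emph{symmetric} conditional-expectation sequences), and it is precisely this structural restriction that must be exploited to beat the general optimal exponent. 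Making this reduction rigorous — quantifying how the special form of the Rota martingale weakens the maximal inequality requirement down to exactly the $p$-convexity hypothesis on $\Phi$ — is the crux of the proof.
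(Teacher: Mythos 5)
Your steps (i), (ii) and (iv) match the paper: the factorization $(T^*)^nT^n=\mathbb{E}\circ\mathbb{E}_n$ via the Anantharaman--Delaroche Markov chain construction (Theorem \ref{Thm:FactoMap}), convergence on a dense subset, and a Banach-principle closure argument are all exactly what the paper does. The genuine gap is your step (iii). You correctly identify that an Orlicz-to-$L^1$ martingale maximal inequality at the $L\log L$ endpoint is obstructed by Hu's optimality result (exponent $2$ in $L\log^\alpha L$), and you propose to beat it by exploiting the special structure of the Rota martingales --- but you give no mechanism for doing so, and you flag it yourself as the unresolved crux. That is not a sketch of a proof of the hardest step; it is a restatement of the difficulty.

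The paper's resolution is different and is worth internalizing: it never proves an endpoint maximal inequality at all. Instead it establishes the strictly weaker property of \emph{bilateral uniform equicontinuity in measure} (b.u.e.m.) of $(A_n)$ at $0$ on $L^\Phi(M,\tau)$, which is all that the noncommutative Banach principle (Theorem \ref{Thm:BanachPrincipleAu}) requires. The b.u.e.m. on $L^\Phi$ is deduced from the already-available $L^p$ ($p>1$) maximal inequality (Junge's Doob inequality, as in Anantharaman--Delaroche) by a spectral splitting that uses the $p$-convexity pointwise: for $x\in M\cap L^1(M,\tau)_+$ one writes
\begin{align*}
x \;=\; \int_0^{\delta/2}\lambda\,de_\lambda \;+\; \int_{\delta/2}^{\infty}\lambda\,de_\lambda \;\leq\; x_\delta \;+\; t\cdot \Phi^{1/p}(x),
\end{align*}
where the low part satisfies $\norm{A_n(x_\delta)}\leq \delta/2$ uniformly because the $A_n$ are contractions on $M$, and the high part $\Phi^{1/p}(x)$ lies in $L^p$ with $\norm{\Phi^{1/p}(x)}_p\leq \norm{x}_\Phi^{1/p}$, so the $L^p$ equicontinuity applies to it (Lemma \ref{Lem:BUEMA}). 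One then passes from the positive cone of $M\cap L^1$ to all of $L^\Phi$ by the closure results (Theorems \ref{Thm:BUEMonClosure} and \ref{Thm:BoundedMeasure}). The a.u.\ upgrade under $p$-convexity of $\widetilde\Phi(t)=\Phi(\sqrt t)$ is obtained concretely via Kadison's inequality $A_n(x)^*A_n(x)\leq A_n(x^2)$ together with $\norm{x^2}_{\widetilde\Phi}=\norm{x}_\Phi^2$, not via a new two-sided maximal inequality. So your outline would need step (iii) replaced wholesale by this equicontinuity-plus-splitting argument; as written, the proposal does not close.
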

Our approach employs Anantharaman-Delaroche's noncommutative Kolmogorov construction/ noncommutative Markov chain construction  \cite{Ad06} and then prove the bilateral uniform equicontinuity in measure of a sequence of martingales in Orlicz space setting.  
In general, a maximum inequality is required for establishing pointwise convergence of a sequence of maps. But,  proving a maximal inequality, often requires highly delicate analysis that can occasionally become complex. However, it has recently been shown that in noncommutative settings, bilateral uniform equicontinuity in measure is highly helpful in establishing the pointwise convergence of a sequence  of maps (see \cite{Li12}, \cite{CL17}, \cite{CL06},\cite{ CLS05}, \cite{pg-ds-ergo-orlicz}  and references therein).

Now we describe the lay out of this article. \S \ref{Sec:preli} is the preliminary section. In the preliminary section, we recollect some standard facts of the literature which will be useful in  this article. In \S \ref{Rota}, we prove noncommutative Rota Theorem. In the last section, we study the convergence of various averages  associated to free group or semigroup actions on noncommutative spaces. In particular in this section  we prove  the pointwise convergence of spherical averages of even radius associated to free group actions on noncommutative Orlicz spaces.

\section{Preliminaries}\label{Sec:preli}

In this section, we  set preliminary background of this article, we briefly recollect some Tomita-Takesaki theory $($for details, see \cite{SZ19}, \cite{St20, Ta03}$)$,  noncommutative $L^p$-spaces $($for details, see \cite{Hi21, GL20}$)$, Haagerup $L^p$-spaces, noncommutative Orlicz spaces and vector valued $L^p$-spaces. We also discuss some technical results regarding  bilateral and almost uniform uniform convergence of a sequence of maps.

In this article, all inner products are linear in the left variable, all Hilbert spaces are separable, and all von Neumann algebras have separable predual. 
Further, $M$ denotes a separable von Neumann algebra. Suppose $ \varphi$ is a faithful normal (in short f.n) state or weight (resp. finite f.n trace or f.n semifinite trace) on $M$, then we refer the pair $( M, \varphi) $ as  noncommutative measure space (resp. noncommutative tracial measure space) and if $\varphi $ is f.n state (resp. tracial state), then refer the pair $( M, \varphi) $   as  noncommutative probability space (resp. noncommutative tracial probability space).

Let us first  recall some basic facts on standard form of von Neumann algebra $M$ when it equipped with a faithful normal state $\varphi$. For weight we refer to \cite{St20}.  Let $M$ be represented on the GNS Hilbert space $\mathcal{H}_{\varphi}:=L^2(M,\varphi)$
in standard form \cite{Ha75}, the GNS vector being denoted by $\Omega_{\varphi}$. 
The inner product and norm on $\mathcal{H}_{\varphi}$ will respectively be denoted by 
$\langle\cdot,\cdot\rangle_{\varphi}$ and $\norm{\cdot}_{2,\varphi}$, and $\norm{\cdot}$
will denote the operator norm of $M$. Recall that  $\mathfrak{A}_{\varphi}=M\Omega_{\varphi}$ is the canonical full left Hilbert algebra associated with $\varphi$. The involution on $\mathfrak{A}_\varphi$ is given by 
$\mathfrak{A}_\varphi\ni x\Omega_\varphi\mapsto x^*\Omega_\varphi\in \mathfrak{A}_\varphi$ 
which is  closable and its closure is denoted by $S_\varphi$. 

Let $\Delta_{\varphi}$, $J_\varphi$ and $(\sigma_t^{\varphi})$ respectively denote 
the Tomita's modular operator, conjugation operator and the modular automorphism 
group associated with $\varphi$. Then $\Delta_\varphi$ is positive, nonsingular 
and self-adjoint, and $S_\varphi=J_\varphi\Delta_\varphi^{1/2}$ is the polar 
decomposition of $S_\varphi$. Moreover, $\Delta_\varphi^{it}J_\varphi
=J_\varphi\Delta_\varphi^{it}$ for all $t\in\mathbb{R}$, and in general 
$J_\varphi f(\Delta_{\varphi})J_\varphi = \bar{f}(\Delta_{\varphi}^{-1})$ for all 
complex valued Borel functions $f$ on $[0,\infty)$. Note that $t\mapsto \Delta_\varphi^{it}$ 
is $s.o.t.$ continuous group of unitaries and is called the modular group associated with 
$\mathfrak{A}_\varphi$ and $(Ad \text{ }\Delta_\varphi^{it})_{\upharpoonleft M}
=\sigma_t^\varphi$ for all $t\in\mathbb{R}$.

\subsection{Noncommutative $L^p$-spaces.} In this subsection, we recall noncommutative $L^p$-spaces associted to tracial von Neumann algebras  and some of their basics properties. Most of the basic properties of noncommutative $L^p$-spaces can be found in \cite{Hi21, GL20}. 

Let $M\subseteq \mathbf{B}(\mathcal{H})$ be a semifinite von Neumann algebra equipped with a faithful normal semifinite $($f.n.s. in the sequel$)$ trace. Let $L^0(M, \tau)$ denote the set of all $\tau$-measurable operators.  The semifinite trace $\tau$ can be extended to the positive cone $L^0_+(M, \tau)$ of $L^0(M, \tau)$ as follows:
\begin{align}
    \tau(x)=\int_0^\infty \lambda \, d\tau(e_\lambda),
\end{align}
where $x=\int_0^\infty \lambda \,de_\lambda$ denotes the spectral decomposition of $x$.  For $0\leq p<\infty$ and $x\in L^0(M, \tau)$, define $\norm{x}_{p}=\tau(\abs{x}^p)^{1/p}$. Then, the noncommutative $L^p$-space $L^p(M)$ associated with the pair $(M,\tau)$ is defined as:
\begin{align*}
L^p(M)=\{x\in L^0(M,\tau): \norm{x}_p<\infty\}.
\end{align*}
Note that $L^p(M)$ equipped with the norm $\norm{\cdot}_{p}$ becomes a Banach space for $p\geq 1$ and a quasi-Banach Space for $p<1$. As usual, for $p=\infty$,  we set $L^\infty(M)=M$ equipped with the operator norm.

For $x\in L^0(M, \tau)$ and $t>0$, the $t$-th generalized $s$-number $\mu_t(x)$ defined by
\begin{align*}
\mu_t(x):&=\inf\{s\geq 0: \tau(e_{(s,\infty)}(\abs{x}))\leq t\}\\
&=\inf\{\norm{xe}_\infty: e\in \mathcal{P}(M), \tau(e^\perp)\leq t\}.
\end{align*} 
The map $[0,\infty)\ni t\rightarrow \mu_t(x)$ is known to be non-increasing and right-continuous. For every $x\in L^0_+(M, \tau)$,
\begin{align*}
\tau(x)=\int_0^\infty \mu_t(x)\, dt.
\end{align*}
Moreover, for any continuous non-decreasing function $f$ on $[0,\infty)$ with $f(0)\geq 0$,
\begin{align*}
\tau (f(x))=\int_0^\infty \mu_t(f(x))\, dt=\int_0^\infty f(\mu_t(x))\, dt.
\end{align*}

Let us recall the $K$-functional in real interpolation theory, which will be useful for our purpose. This was proven in \cite{FK86} (see also \cite{BL76}).

\begin{prop}\label{Prop:IntegralDecRea}
	For $x\in L^0(M, \tau)$ and $t>0$, we have
	\begin{align*}
		\int_0^t \mu_s(x)\,ds=\inf\left\{ \tau(\abs{y})+t\norm{z}_\infty: y,z\in L^0(M, \tau), \,x=y+z\right\}.
	\end{align*}
\end{prop}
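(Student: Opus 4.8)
The plan is to recognise the integral $\int_0^t\mu_s(x)\,ds$ as the classical $K$-functional $K(t,x)=K\big(t,x;L^1(M,\tau),M\big)$ for the interpolation couple $(L^1(M,\tau),M)$, and to prove the two inequalities separately. The computation rests entirely on the basic properties of the generalised singular numbers recalled above, in particular the identity $\tau(f(\abs{x}))=\int_0^\infty f(\mu_s(\abs{x}))\,ds$ and the relation $\mu_s(f(\abs{x}))=f(\mu_s(\abs{x}))$ for continuous non-decreasing $f$ with $f(0)\ge 0$.

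First I would show that the integral is dominated by the infimum. Fix any admissible decomposition $x=y+z$ with $y,z\in L^0(M,\tau)$. Using the subadditivity of the functional $a\mapsto\int_0^t\mu_s(a)\,ds$, namely $\int_0^t\mu_s(y+z)\,ds\leq\int_0^t\mu_s(y)\,ds+\int_0^t\mu_s(z)\,ds$ (a standard Fack--Kosaki property of generalised singular numbers), together with the monotonicity bounds $\int_0^t\mu_s(y)\,ds\leq\int_0^\infty\mu_s(y)\,ds=\tau(\abs{y})$ and $\int_0^t\mu_s(z)\,ds\leq t\norm{z}_\infty$ (since $\mu_s(z)\leq\norm{z}_\infty$ for all $s$), one obtains $\int_0^t\mu_s(x)\,ds\leq\tau(\abs{y})+t\norm{z}_\infty$. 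Taking the infimum over all decompositions gives the ``$\leq$'' inequality. This argument also disposes of the degenerate case $\int_0^t\mu_s(x)\,ds=\infty$, in which every value $\tau(\abs{y})+t\norm{z}_\infty$ is then forced to be infinite, so that the infimum equals the integral trivially.

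For the reverse inequality I would construct an explicit optimal decomposition by spectral truncation. Writing the polar decomposition $x=u\abs{x}$ and putting $\alpha=\mu_t(x)$, I set $y=u\,(\abs{x}-\alpha)_+$ and $z=x-y=u\,\min(\abs{x},\alpha)$, both lying in $L^0(M,\tau)$ by functional calculus. Since $\min(\lambda,\alpha)\leq\alpha$ we get $\norm{z}_\infty\leq\alpha$. Applying $\mu_s(f(\abs{x}))=f(\mu_s(\abs{x}))$ with $f(\lambda)=(\lambda-\alpha)_+$, and using that $\mu_s(x)\geq\alpha$ for $s<t$ while $\mu_s(x)\leq\alpha$ for $s>t$ (by monotonicity and right-continuity of $s\mapsto\mu_s(x)$), I would compute $\tau(\abs{y})=\int_0^\infty(\mu_s(x)-\alpha)_+\,ds=\int_0^t\mu_s(x)\,ds-t\alpha$. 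Combining, $\tau(\abs{y})+t\norm{z}_\infty\leq\int_0^t\mu_s(x)\,ds$, so the infimum is at most the integral, and equality follows.

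The main obstacle is the exact evaluation of $\tau(\abs{y})$ in the last step: one must handle the behaviour at the cut-off value $s=t$ when $s\mapsto\mu_s(x)$ has a jump or a flat plateau at height $\alpha$, where the inequalities $\mu_s(x)\gtrless\alpha$ may fail on a set of positive measure at the boundary. This is controlled by the right-continuity and monotonicity of the singular-value function, which guarantee that the relevant identity $\int_0^\infty(\mu_s(x)-\alpha)_+\,ds=\int_0^t\mu_s(x)\,ds-t\alpha$ holds regardless of the fine structure at $s=t$.
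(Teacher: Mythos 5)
Your proof is correct and is the standard argument: the paper itself gives no proof of this proposition, citing Fack--Kosaki \cite{FK86}, and your two steps (subadditivity of $a\mapsto\int_0^t\mu_s(a)\,ds$ for ``$\le$'', together with $\int_0^t\mu_s(y)\,ds\le\tau(\abs{y})$ and $\mu_s(z)\le\norm{z}_\infty$; and the truncation $y=u(\abs{x}-\mu_t(x))_+$, $z=u\min(\abs{x},\mu_t(x))$ for ``$\ge$'') are exactly those of the reference. The ``main obstacle'' you describe at the end is in fact vacuous: monotonicity alone gives $\mu_s(x)\ge\mu_t(x)$ for $s<t$ and $\mu_s(x)\le\mu_t(x)$ for $s>t$, so $(\mu_s(x)-\mu_t(x))_+$ coincides with $\mu_s(x)-\mu_t(x)$ on $(0,t)$ and vanishes on $(t,\infty)$, and the single point $s=t$ does not affect the integral.
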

Another theorem that is helpful for our purpose is proven by Hardy, Littlewood, and Polya \cite{HLP29} is given below.
\begin{thm}\label{Thm:HLP}
	Let $f,g$ be non-negative Borel measurable functions on $[0,\infty)$ which are finite almost everywhere. Then the following are equivalent:
	\begin{enumerate}
		\item $\int_{0}^t f(s)\, ds\leq \int_0^t g(s)\, ds$ for all $t>0$.
		\item any non-negative, non-decreasing, convex continuous function $\Phi: [0,\infty)\rightarrow [0,\infty)$, which is not identically zero, satisfying the following inequality:
		\begin{align*}
			\int_{0}^t \Phi(f(s))\, ds\leq \int_0^t \Phi(g(s))\, ds.
		\end{align*}
	\end{enumerate}
\end{thm}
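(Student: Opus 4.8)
The implication $(2)\Rightarrow(1)$ is immediate: the identity $\Phi(x)=x$ is non-negative, non-decreasing, convex, continuous and not identically zero on $[0,\infty)$, so specializing $(2)$ to it returns precisely $(1)$. All the content lies in $(1)\Rightarrow(2)$, and the plan is to decompose an arbitrary admissible $\Phi$ into elementary pieces on which the inequality is transparent, and then reassemble. Since $\Phi$ is convex and non-decreasing on $[0,\infty)$, its right derivative $\Phi'_+$ is non-negative and non-decreasing; letting $\mu:=d\Phi'_+$ be the associated non-negative Lebesgue--Stieltjes measure and $\beta:=\Phi'_+(0)\ge 0$, a Tonelli argument yields the representation
\[
\Phi(x)=\Phi(0)+\beta\,x+\int_{(0,\infty)}(x-a)_+\,d\mu(a),\qquad x\ge 0,
\]
in which every summand is non-negative.

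Substituting $x=f(s)$ and $x=g(s)$ and integrating over $s\in[0,t]$, the non-negativity of all integrands lets me interchange integrals once more to obtain
\[
\int_0^t\Phi(f)\,ds=\Phi(0)\,t+\beta\int_0^t f\,ds+\int_{(0,\infty)}\Big(\int_0^t (f(s)-a)_+\,ds\Big)\,d\mu(a),
\]
and the analogous identity for $g$. The constant terms coincide, and the linear terms satisfy $\beta\int_0^t f\le\beta\int_0^t g$ by $(1)$ together with $\beta\ge 0$. Thus $(1)\Rightarrow(2)$ reduces to the single family of \emph{ramp inequalities} $\int_0^t (f(s)-a)_+\,ds\le\int_0^t (g(s)-a)_+\,ds$ for all $a\ge 0$ and $t>0$, which I expect to be the main obstacle.

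To prove these, set $F(u):=\int_0^u f$ and $G(u):=\int_0^u g$; these are non-decreasing and, by $(1)$, satisfy $F\le G$ pointwise. The key lemma is the sup-representation
\[
\int_0^t (h(s)-a)_+\,ds=\sup_{0\le u\le t}\big(H(u)-a\,u\big),\qquad H(u):=\int_0^u h,
\]
valid for a \emph{non-increasing} $h\ge 0$. The inequality ``$\ge$'' holds for any $h$, since $(h-a)_+\ge h-a$ and $(h-a)_+\ge 0$ give $\int_0^t(h-a)_+\ge\int_0^u(h-a)_+\ge H(u)-a\,u$ for every $u\le t$. For the reverse inequality monotonicity enters: when $h$ is non-increasing the super-level set $\{h>a\}\cap[0,t]$ is an initial interval $[0,u_0)$ with $u_0=\min\{t,\,|\{h>a\}|\}$, so $u\mapsto H(u)-a\,u$ rises on $[0,u_0]$ and falls thereafter, attaining its maximum at $u_0$, where its value equals $\int_0^t(h-a)_+$. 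Applying the lemma to $f$ and to $g$ and using $F(u)-a\,u\le G(u)-a\,u$ for all $u$ gives $\sup_{[0,t]}(F-a\,\cdot)\le\sup_{[0,t]}(G-a\,\cdot)$, which is exactly the ramp inequality; reassembling the three terms then yields $(2)$.

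The decisive point — and the reason the sup-representation, hence the whole argument, goes through — is that $f$ and $g$ are non-increasing. This is automatic in every use of the theorem in this paper, where $f$ and $g$ are generalized $s$-number functions $t\mapsto\mu_t(\cdot)$, which are non-increasing by construction; and the hypothesis is genuinely needed, since for non-monotone data (e.g.\ $f=2\,\mathbf{1}_{(1/2,\,9/10)}$, $g=\mathbf{1}_{(0,1)}$, $a=1$) one checks $\int_0^t f\le\int_0^t g$ for all $t$ while the ramp inequality fails at $t=9/10$. I would therefore carry out the proof for non-increasing $f,g$, this being the intended and the applied setting.
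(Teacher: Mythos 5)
Your argument is correct and complete, but there is no internal proof to compare it against: the paper states this result without proof, quoting it from Hardy--Littlewood--P\'olya \cite{HLP29}. What you have written is the standard self-contained proof of the submajorization theorem: $(2)\Rightarrow(1)$ via $\Phi(x)=x$, and $(1)\Rightarrow(2)$ via the representation $\Phi(x)=\Phi(0)+\beta x+\int_{(0,\infty)}(x-a)_+\,d\mu(a)$, Tonelli, and the reduction to the ramp inequalities, which you settle through the sup-representation $\int_0^t(h(s)-a)_+\,ds=\sup_{0\le u\le t}\bigl(H(u)-au\bigr)$ valid for non-increasing $h$; all steps check out. Equally valuable is your observation that the theorem as stated in the paper is false for general non-negative Borel $f,g$: your counterexample is right, since with $f=2\,\mathbf{1}_{(1/2,9/10)}$ and $g=\mathbf{1}_{(0,1)}$ one has $\int_0^t f(s)\,ds\le\int_0^t g(s)\,ds$ for every $t>0$, yet for the admissible function $\Phi(x)=(x-1)_+$ one gets $\int_0^{9/10}\Phi(f(s))\,ds=2/5>0=\int_0^{9/10}\Phi(g(s))\,ds$. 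The missing hypothesis is that $f,g$ be non-increasing (this is how the classical theorem reads, equivalently a statement about decreasing rearrangements), and as you note the omission is harmless for the paper: the sole application, in Proposition \ref{ext}, takes $f(s)=\mu_s(T(x))$ and $g(s)=\mu_s(x)$, which are non-increasing by the definition of generalized $s$-numbers. So restricting to non-increasing $f,g$ is the right fix, and your proof of that corrected statement is sound.
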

\subsection{Haagerup $L^p$-spaces.} 

Let us now discuss on the Haagerup $L^p$-space associated with the pair $(M,\varphi)$. Let $\mathcal{R}=M\rtimes_{\sigma}\mathbb{R}\subseteq \mathbf{B}(L^2(\mathbb{R},\mathcal{H}_\varphi))$ denote the crossed product of $M$ by $\mathbb{R}$ with respect to the modular automorphism group $\sigma:=\sigma^{\varphi}$ is the von Neumann algebra generated by the elements $\pi(x)$, $x\in M$ and $\lambda(t)$, $t\in\mathbb{R}$, on $L^2(\mathbb{R},\mathcal{H}_\varphi)$, where
\begin{align*}
	\pi(x)\xi(s)=\sigma_{-s}(x)\xi_{s}\quad \text { and }\quad \lambda(t)\xi(s)=\xi(s-t).
\end{align*}
The action $\sigma$ of $\mathbb{R}$ admits a dual action $\widehat{\sigma}$ of the dual 
group $\widehat{\mathbb{R}}=\mathbb{R}$ on $\mathcal{R}$ which satisfies
\begin{align*}
	\widehat{\sigma}_{t}(\pi_{\sigma}(x))=\pi_{\sigma}(x), \text{ and, } \widehat{\sigma}_{t}(\lambda(s))=\mathrm{e}^{-ist}\lambda(s),\text{ }\text{ }x\in M,\text{ }t\in \widehat{\mathbb{R}}, \text{ }s\in\mathbb{R}.
\end{align*}
It is well known that 
$\mathcal{R}$ is semifinite and there exists a unique canonical $f.n.s.$ tracial weight $\tau$ on $\mathcal{R}$ satisfying $\tau\circ\widehat{\sigma}_t=\mathrm{e}^{-t}\tau$, $t\in\mathbb{R}$. 

Let $\mathcal{A}(\mathcal{R})$ denote the collection of all $\tau$-measurable operators affiliated to $\mathcal{R}$. Then, $\mathcal{A}(\mathcal{R})$ becomes a complete Hausdorff topological $*$-algebra with respect to strong sum and strong product in which $\mathcal{R}$ is dense. Note that the dual action $\widehat{\sigma}$ extends to $\mathcal{A}(\mathcal{R})$ and this extension will also be denoted by $\widehat\sigma$ with slight abuse of notation.

Let $\mathcal{A}(\mathcal{R})$ denote the collection of all $\tau$-measurable operators affiliated to $\mathcal{R}$. Then, $\mathcal{A}(\mathcal{R})$ becomes a complete Hausdorff topological $*$-algebra with respect to strong sum and strong product in which $\mathcal{R}$ is dense. Note that the dual action $\widehat{\sigma}$ extends to $\mathcal{A}(\mathcal{R})$ and this extension will also be denoted by $\widehat\sigma$ with slight abuse of notation. Following \cite{Te81, Ha77}, for $1\leq p\leq\infty$, define
\begin{align*}
	L^p(M,\varphi)=\{x\in \mathcal{A}(\mathcal{R}): \widehat\sigma_t(x) =\mathrm{e}^{-\frac{t}{p}} x\text{ }\forall \text{ }t\in\mathbb{R}\}.
\end{align*}
For $\psi\in M_{*}^{+}$, we denote by $D_{\psi}:=\frac{d\tilde{\psi}}{d\tau}$, the Radon-Nikodym derivative of $\tilde{\psi}$ with respect to $\tau$ $(\tilde\psi$ being the associated dual weight$)$. Consider $L^{1}(M,\varphi)$ equipped with the inherited topology coming from $\mathcal{A}(\mathcal{R})$.

Let $\mathcal{A}(\mathcal{R})$ denote the collection of all $\tau$-measurable operators affiliated to $\mathcal{R}$. Then, $\mathcal{A}(\mathcal{R})$ becomes a complete Hausdorff topological $*$-algebra with respect to strong sum and strong product in which $\mathcal{R}$ is dense. Note that the dual action $\widehat{\sigma}$ extends to $\mathcal{A}(\mathcal{R})$ and this extension will also be denoted by $\widehat\sigma$ with slight abuse of notation. Following \cite{Te81, Ha77}, for $1\leq p\leq\infty$, define
\begin{align*}
	L^p(M,\varphi)=\{x\in \mathcal{A}(\mathcal{R}): \widehat\sigma_t(x) =\mathrm{e}^{-\frac{t}{p}} x\text{ }\forall \text{ }t\in\mathbb{R}\}.
\end{align*}
For $\psi\in M_{*}^{+}$, we denote by $D_{\psi}:=\frac{d\tilde{\psi}}{d\tau}$, the Radon-Nikodym derivative of $\tilde{\psi}$ with respect to $\tau$ $(\tilde\psi$ being the associated dual weight$)$. Consider $L^{1}(M,\varphi)$ equipped with the inherited topology coming from $\mathcal{A}(\mathcal{R})$.

Then, the map
\begin{align}\label{Eq:PredualBijectionL1}
	M_{*}^{+}\ni \psi\mapsto D_{\psi}=\frac{d\tilde{\psi}}{d\tau}\in L^{1}(M,\varphi)^{+},
\end{align}
extends to a linear homeomorphism of $M_{*}$ onto $L^{1}(M,\varphi)$. This, allows one to equip $L^1(M,\varphi)$ with a norm defined by
\begin{align*}
	\norm{D_\psi}_{1}:=\abs{\psi}(1)=\norm{\psi}_{M_*},\text{ }\psi\in M_{*}. 
\end{align*}
Consequently, the map considered in Eq. \eqref{Eq:PredualBijectionL1} is an isometric isomorphism between $(M_*,\norm{\cdot}_{M_*})$ and $(L^1(M,\varphi),\norm{\cdot}_1)$.  Thus, $L^{1}(M,\varphi)$ equipped with $\norm{\cdot}_{1}$ is a Banach space.

Note that 
\begin{align*}
	\tr: L^1(M,\varphi)\rightarrow\mathbb{C} \text{ defined by }\tr(D_\psi)=\psi(1),\text{ }\psi\in {M_{*}},
\end{align*}
is linear, positive and $\norm{\cdot}_1$-continuous. In fact, it is a contraction. 

If $x\in L^p(M,\varphi)$, then $\abs{x}^p\in L^1(M,\varphi)$ for $1\leq p <\infty$ \cite[Prop. 12, pp. 39]{Te81}. 
Consequently, for $1< p<\infty$, $L^{p}(M,\varphi)$ equipped with the norm $\norm{\cdot}_{p}$ defined by
\begin{align*}
	\norm{x}_{p}=\tr(\abs{x}^{p})^{\frac{1}{p}}, \text{ }x\in L^{p}(M,\varphi),
\end{align*}
becomes a Banach space. 

Observe that $L^{\infty}(M,\varphi)$ coincides with $M$. Therefore, $\norm{\cdot}_{\infty}$ on $L^{\infty}(M,\varphi)$ is defined to be the usual operator norm on $M$.

Fix $1\leq p<\infty$ and $x\in M$. Then, the operators $x D_{\varphi}^{\frac{1}{p}}$ and 
$D_{\varphi}^{\frac{1}{2p}}x D_{\varphi}^{\frac{1}{2p}}$ defined on $L^2(\mathbb{R},\mathcal{H}_\varphi)$ 
are closable \cite{Wa88}, and we denote the closure of these operators by $x D_{\varphi}^{\frac{1}{p}}$ and
$D_{\varphi}^{\frac{1}{2p}}x D_{\varphi}^{\frac{1}{2p}}$ respectively with slight abuse of notation. Note that  
$L^{p}(M,\varphi)=\overline{MD_{\varphi}^{1/p}}^{\norm{\cdot}_{p}}$, where $MD_{\varphi}^{1/p}:=\{yD_{\varphi}^{1/p}:y\in M\}$ \cite[Lemma 3]{Wa88}.  Further, by \cite[Cor. 4]{Wa88}, $L^{p}(M,\varphi)=\overline{D_{\varphi}^{\frac{1}{2p}}M D_{\varphi}^{\frac{1}{2p}}}^{\norm{\cdot}_{p}}$, where $D_{\varphi}^{\frac{1}{2p}}M D_{\varphi}^{\frac{1}{2p}}:=\{D_{\varphi}^{\frac{1}{2p}}y D_{\varphi}^{\frac{1}{2p}}:y\in M\}$.

\subsection{Noncommutative Orlicz spaces.} In this section, we discuss the preliminaries of Orlicz spaces. Details can be found in \cite{GL20}.

Let $\Phi:[0,\infty)\rightarrow [0,\infty)$ be an Orlicz function, i.e., a continuous increasing and convex function satisfying $\Phi(0)=0$ and $\lim_{t\rightarrow\infty} \Phi(t)=\infty$. We say that  a Orlicz function $ \Phi$ satisfies the $\Delta_2$ condition if there exists a constant $ K > 0$ such that 
 $$ \Phi(2 t ) \leq K \Phi(t), \text{ for all } t >0.$$
In this article we assume that  $\Phi$ satisfies $\Delta_2$-condition  and we write $\Phi\in\Delta_2$ (unless otherwise stated).

\begin{defn}
Given a semifinite von Neumann algebra $(M,\tau)$ and an Orlicz function $\Phi$, the noncommutative Orlicz space $L^\Phi(M, \tau)$ is defined by the collection of all $x\in L^0(M, \tau)$ for which there exists some positive constant $\lambda>0$ such that $\tau\left(\Phi(\lambda^{-1} \abs{x})\right)<\infty$. The space $L^\Phi(M)$, equipped with the norm
\begin{align*}
    \norm{x}_\Phi=\inf\{ \lambda>0: \tau\left(\Phi\left(\lambda^{-1} \abs{x}\right)\right)\leq 1\},
\end{align*}
is a Banach space. Further, it is known that 
$$ 
\norm{x}_\Phi = \inf \{ \eps >0  :    \int_0^\infty \Phi \left( \frac{ \mu_t (x) }{ \eps} \right) \leq 1   \}.
$$
Equivalently, we have $\norm{ x}_\Phi = \norm{ \left(\mu_t(x) \right) }_\Phi$, where $\norm{ \left(\mu_t(x) \right) }_\Phi$ denotes the Orlicz norm of the function $ [0, \infty )\ni t  \ri \mu_t(x) $.
We remark that if $ \Phi \in \Delta_2$, then for  $ x \in L^0( M, \tau)$,  $~~\, \tau(  \Phi(\abs{x})) < \infty $ if and only if $ x \in L^\Phi( M, \tau)$. We further note that 
the  function $ \Phi_s(t) = t (\log(1+t))^s $ for $ s \geq 0$  is an Orlicz function and it satisfies $\Delta_2$ condition.  
\end{defn}
Following lemma will be useful in the sequel. We add a proof for plain reading.
\begin{lem}\label{leq}
	Suppose $ \Phi$ be a Orlicz function. Suppose $  x \in L^\Phi(M, \tau) $ with $ \norm{ x }_\Phi \leq 1$. Then we have the following  $$ \tau( \Phi(x)) \leq \norm{ x }_\Phi.$$
\end{lem}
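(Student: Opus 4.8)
The plan is to deduce the inequality from two elementary ingredients: the convexity scaling inequality $\Phi(\lambda t)\le\lambda\Phi(t)$ valid for $\lambda\in[0,1]$, and the boundary value of the Luxemburg modular at the norm. Since $\Phi$ is convex with $\Phi(0)=0$, for every $t\ge 0$ and every $\lambda\in[0,1]$ one has $\Phi(\lambda t)=\Phi\big(\lambda t+(1-\lambda)\cdot 0\big)\le\lambda\Phi(t)+(1-\lambda)\Phi(0)=\lambda\Phi(t)$. First I would dispose of the trivial case $x=0$, where $\norm{x}_\Phi=0$ and both sides vanish, and henceforth set $\lambda:=\norm{x}_\Phi$; the hypothesis $\norm{x}_\Phi\le 1$ is used precisely to guarantee $\lambda\in(0,1]$, which is what makes the scaling inequality available. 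Here $\Phi(x)$ is understood as $\Phi(\abs{x})$ via Borel functional calculus.

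The key step is to establish $\tau\big(\Phi(\abs{x}/\lambda)\big)\le 1$. Using the singular-value representation recalled in the preliminaries, I would write
\[
\tau\big(\Phi(\abs{x}/\mu)\big)=\int_0^\infty \Phi\!\left(\frac{\mu_t(x)}{\mu}\right)dt,\qquad \mu>0,
\]
so that $\mu\mapsto\tau\big(\Phi(\abs{x}/\mu)\big)$ is a non-increasing function of $\mu$. As $\mu\downarrow\lambda$ the integrands increase pointwise to $\Phi\big(\mu_t(x)/\lambda\big)$, by continuity and monotonicity of $\Phi$, and the monotone convergence theorem together with the definition of $\lambda=\norm{x}_\Phi$ as an infimum gives
\[
\tau\big(\Phi(\abs{x}/\lambda)\big)=\lim_{\mu\downarrow\lambda}\tau\big(\Phi(\abs{x}/\mu)\big)\le 1.
\]

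Finally I would combine the two ingredients. Applying the scaling inequality with the scalar $\lambda\in(0,1]$ to each singular value, equivalently in the functional calculus of $\abs{x}$, yields the pointwise bound $\Phi\big(\mu_t(x)\big)\le\lambda\,\Phi\big(\mu_t(x)/\lambda\big)$, whence
\[
\tau\big(\Phi(\abs{x})\big)=\int_0^\infty\Phi\big(\mu_t(x)\big)\,dt\le\lambda\int_0^\infty\Phi\!\left(\frac{\mu_t(x)}{\lambda}\right)dt=\lambda\,\tau\big(\Phi(\abs{x}/\lambda)\big)\le\lambda=\norm{x}_\Phi,
\]
which is the claim. The only genuinely delicate point is the boundary estimate $\tau\big(\Phi(\abs{x}/\lambda)\big)\le 1$, since it requires passing to the limit at the infimum defining the Luxemburg norm; reducing the trace to the scalar integral $\int_0^\infty\Phi(\mu_t(x)/\mu)\,dt$ turns this into a routine application of monotone convergence and, notably, avoids invoking the $\Delta_2$ condition.
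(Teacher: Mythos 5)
Your proposal is correct and follows essentially the same route as the paper: both reduce $\tau(\Phi(\cdot))$ to the scalar integral $\int_0^\infty\Phi(\mu_t(x)/\mu)\,dt$, pass to the limit $\mu\downarrow\norm{x}_\Phi$ by monotone convergence to get $\tau\bigl(\Phi(\abs{x}/\norm{x}_\Phi)\bigr)\le 1$, and then apply the convexity scaling $\Phi(\lambda t)\le\lambda\Phi(t)$ for $\lambda=\norm{x}_\Phi\in(0,1]$ (the paper writes this equivalently as $u\Phi(t)\le\Phi(ut)$ for $u=1/\norm{x}_\Phi\ge 1$). No substantive difference.
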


\begin{proof}
We may assume $\norm{ x }_\Phi >0$. 	First we note that $ \norm{ x }_\Phi = \norm{ \left( \mu_t(x) \right) }_\Phi$.
Let $ (r_n)	\subseteq ( \norm{ x }_\Phi, \infty )$ be a decreasing sequence such that 
$ \underset{ n \ri \infty }{ \lim } r_n = \norm{ x}_\Phi $. Then by monotone convergence theorem it follows that 
$$  \underset{ n \ri \infty }{ \lim } \int_0^\infty \Phi\left( \frac{\mu_t(x)}{r_n} \right)dt = \int_0^\infty \Phi\left( \frac{\mu_t(x)}{ \norm{ x }_\Phi} \right)dt $$
We note that $ \int_0^\infty \Phi\left( \frac{\mu_t(x)}{r_n} \right)dt  \leq 1$ for all $n \in \N$.  Therefore, we note that $\int_0^\infty \Phi\left( \frac{\mu_t(x)}{ \norm{ x }_\Phi} \right)dt \leq 1$. Consequently we have 
$$ \tau \left(  \frac{\Phi(x)}{ \norm{x}_\Phi} \right) \leq 1.$$
As $ \Phi$ is convex  and $ \Phi(0) = 0$,  so we have $ \Phi( rt ) \leq r \Phi(t) $ for all $  0 \leq r \leq 1 $ and $t \geq 0$.  Consequently, $ u \Phi(t) \leq  \Phi(  u t )  $ for $ u \geq 1 $ and $ t \geq 0$. Since $ 0 < \norm{ x }_\Phi \leq 1 $, we have 
$$  \frac{1}{\norm{ x }_\Phi } \Phi(t) \leq  \Phi \left(    \frac{t}{\norm{ x }_\Phi} \right). 
$$
Since $ 0 \leq \norm{ x } \leq 1 $, we have $  \frac{1}{\norm{ x }_\Phi } \Phi(x) \leq  \Phi \left(    \frac{x}{\norm{ x }_\Phi}\right)$. Thus, we have 
$$ \tau( \Phi(x))  \leq \norm{ x}_\Phi \tau \left(  \frac{\Phi(x)}{ \norm{x}_\Phi} \right) \leq \norm{ x }_\Phi.$$
\end{proof}

\begin{defn}\label{Defn:AUconvergence}
	Let $M$ be a von Neumann algebra equipped with a $f.n $ semifinite  trace $\tau$.  A sequence $(x_n)$ in $L^0(M,\tau)$ is said to be converges to $x\in L^0(M,\tau)$ almost uniformly $($\emph{a.u} in short$)$ $($respectively, bilaterally almost uniformly $($ \emph{b.a.u} in short$))$ if for every $\epsilon>0$, there exists a projection $e\in M$ such that 
	\begin{align*}
	\tau(1-e)&\leq \epsilon \quad \text{ and }\quad \lim_{n\rightarrow\infty} \norm{(x_n-x) e}=0\\
	(\text{respectively, }\tau(1-e)&\leq \epsilon \quad \text{ and }\quad \lim_{n\rightarrow\infty} \norm{e(x_n-x) e}=0).
	\end{align*}
\end{defn}
We note that if $(x_n)$ converges in \emph{a.u}, then it converges in \emph{b.a.u}. The following results  are standard fact of the subject. For completeness,  we add their proofs.

\begin{lem}\label{dense}
	$ M \cap L^1(M, \tau)$ is dense in $L^\Phi(M, \tau)$. Moreover, for $x \in L^\Phi(M, \tau)_+$, then there exists an increasing  sequence $(x_k) \subseteq   M \cap L^1(M, \tau)_+$  such that $\underset{ k \ri \infty }{\lim} \norm{ x - x_k }_\Phi = 0 $   and satisfying the following; 
	$$  x = \text{a.u-}\underset{ n \ri \infty }{\lim}  x_k .$$
\end{lem}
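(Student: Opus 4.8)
The plan is to prove everything by a single spectral truncation applied to a positive element, and to deduce the general density statement from the positive case. First I would reduce the density assertion to the second part of the statement: writing an arbitrary $x\in L^\Phi(M,\tau)$ as a combination of four positive elements of $L^\Phi(M,\tau)$ (the positive and negative parts of its real and imaginary parts), it is enough to approximate each of these in $\norm{\cdot}_\Phi$ by elements of $M\cap L^1(M,\tau)_+$. So I fix $x\in L^\Phi(M,\tau)_+$ with spectral decomposition $x=\int_0^\infty \lambda\,de_\lambda$ and set
$$ x_k = x\,e_{(1/k,\,k]}(x) = \int_{(1/k,\,k]}\lambda\,de_\lambda, \qquad k\in\N. $$
Each $x_k$ is bounded by $k$. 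Since $x\in L^\Phi(M,\tau)$ forces $\mu_t(x)\to 0$ as $t\to\infty$, the distribution function $s\mapsto \tau(e_{(s,\infty)}(x))$ is finite for every $s>0$ and decreases to $0$ as $s\to\infty$; in particular $\tau(e_{(1/k,k]}(x))<\infty$, so $\tau(x_k)\leq k\,\tau(e_{(1/k,k]}(x))<\infty$ and hence $x_k\in M\cap L^1(M,\tau)_+$. Because the intervals $(1/k,k]$ increase to $(0,\infty)$, the $x_k$ form an increasing sequence with $x_k\uparrow x$.

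Next I would establish $\norm{x-x_k}_\Phi\to 0$. Here
$$ x-x_k = x\,e_{(0,1/k]}(x) + x\,e_{(k,\infty)}(x), $$
which decreases to $0$; thus $\mu_t(x-x_k)\downarrow 0$ for each $t>0$ while $\mu_t(x-x_k)\leq\mu_t(x)$. Since $\Phi\in\Delta_2$, the remark following the definition of $L^\Phi(M,\tau)$ gives $\tau(\Phi(x))=\int_0^\infty\Phi(\mu_t(x))\,dt<\infty$, so dominated convergence yields the modular convergence $\tau(\Phi(x-x_k))=\int_0^\infty\Phi(\mu_t(x-x_k))\,dt\to 0$. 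I would then upgrade this to norm convergence using $\Delta_2$: if $\norm{x-x_k}_\Phi$ did not tend to $0$, then along a subsequence $\norm{x-x_k}_\Phi\geq\eps$, and iterating $\Phi(2t)\leq K\Phi(t)$ finitely many times would force $\tau(\Phi(x-x_k))$ to stay bounded below, a contradiction. This proves $\norm{x-x_k}_\Phi\to 0$, and with it the density statement.

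Finally I would verify the \emph{a.u.} convergence. Given $\eps>0$, using $\tau(e_{(N,\infty)}(x))\to 0$ I pick $N$ with $\tau(e_{(N,\infty)}(x))\leq\eps$ and set $e=e_{[0,N]}(x)$, so $\tau(1-e)\leq\eps$. For $k>N$ the two tails decouple against $e$: the large-value tail is annihilated, $x\,e_{(k,\infty)}(x)\,e=0$, while the small-value tail satisfies $x\,e_{(0,1/k]}(x)\,e=x\,e_{(0,1/k]}(x)$ with $\norm{x\,e_{(0,1/k]}(x)}\leq 1/k$. Hence $\norm{(x-x_k)e}\leq 1/k\to 0$, which gives $x=\text{a.u-}\lim_k x_k$.

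The spectral bookkeeping in the first and last paragraphs is routine; the one genuinely load-bearing step is the passage from modular convergence $\tau(\Phi(x-x_k))\to 0$ to norm convergence $\norm{x-x_k}_\Phi\to 0$, which is exactly where the standing $\Delta_2$ hypothesis enters. I would therefore isolate that implication (order continuity of the Orlicz norm under $\Delta_2$) and treat it with care, as it is the only place the result could fail without $\Delta_2$.
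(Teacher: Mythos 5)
Your proof is correct and follows essentially the same route as the paper: the same spectral truncation $x_k = x\,e_{(1/k,k]}(x)$, the same domination $\mu_t(x-x_k)\leq \mu_t(x)$, and the same dominated-convergence argument for $\int_0^\infty \Phi(\mu_t(x-x_k))\,dt \ri 0$. Two local points where you diverge are worth recording. First, you explicitly isolate the passage from modular convergence $\tau(\Phi(x-x_k))\ri 0$ to norm convergence $\norm{x-x_k}_\Phi \ri 0$ and justify it by iterating the $\Delta_2$ inequality; the paper performs this step silently, and you are right that this is where $\Delta_2$ is genuinely load-bearing. Second, for the \emph{a.u.} statement the paper only observes that $x-x_k \ri 0$ in measure with $x_k\leq x$ and extracts an a.u.-convergent \emph{subsequence}, whereas you exhibit the projection $e=e_{[0,N]}(x)$ with $\tau(1-e)\leq\eps$ directly and get $\norm{(x-x_k)e}\leq 1/k$ for all large $k$, i.e.\ a.u.\ convergence of the whole sequence; this is cleaner and slightly stronger than what the paper writes (though either version suffices for the lemma as stated, which only asserts the existence of some sequence). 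The one small thing to keep explicit is the finiteness of $\tau(e_{(1/k,\infty)}(x))$, which you justify via $\mu_t(x)\ri 0$ as $t\ri\infty$; this uses that $\Phi(s)>0$ for $s>0$, which holds for the Orlicz functions considered here, and the paper's own proof relies on the same fact without comment.
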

\begin{proof}
Let $ x \in  L^\Phi(M, \tau)_+$ and  consider $ e_n = \chi_{ ( 0, n ]}(x)$.
Suppose $ x_n= xe_n $, then clearly $ x_n \in M$ and $\int^n_0  \lambda d e_\lambda $, where $\{ e_\lambda  : \lambda \geq 0 \}$ is the set of spectral projections of $x$. . Since $ x$ is measurable, we note that $ \underset{ n \ri \infty }{\lim} \tau( 1-e_n) = 0$ (see \cite[Proposition 4.7]{Hi21}).

Further as $ (x-xe_n)e_n=0$  for all $ n \in \N$, it follows that 
the sequence  $ (x-xe_n) $ converges to $0$ in measure topology. Consequently, by using 
\cite[Lemma 4.18 (4)]{Hi21}, we obtain that 
\begin{align}\label{measure}
\lim_{ n \ri \infty } \mu_t( x-xe_n) \ri 0  \text{ for all   } t > 0.
\end{align}
Further, we have $ 0\leq x -xe_n \leq x $ for all $n \in \N$, so, from \cite[Proposition 4.19 (5)]{Hi21}, it follows that 
$$ \mu_t(  x -xe_n ) \leq \mu_t(x) \text{ for all } t >0 \text{ and } n \in \N.$$
Now we apply  Lebesgue dominated convergence theorem (see \cite[Proposition 4.25]{Hi21}), we obtain the following 
$$ \lim_{ n \ri \infty } \tau( \Phi( x-xe_n)) = \lim_{ n \ri \infty } \int_0^\infty \Phi(\mu_t( x-xe_n)) dt = 0.$$
Therefore, $\underset{ n \ri \infty }{ \lim }  \norm{ x-xe_n}_\Phi = 0$, i.e, $\underset{ n \ri \infty }{ \lim }  \norm{ x-x_n}_\Phi = 0$ .  As  $ x_n \in M \cap L^\Phi(M, \tau)$, so,  $ M \cap L^\Phi(M, \tau)$ is dense in $L^\Phi(M, \tau)$. 

To prove  $ M \cap L^1(M, \tau))$ is dense in $L^\Phi(M, \tau)$. Let $ x \in L^\Phi(M, \tau)_+$, then  consider,   $ p_n =   \chi_{ ( \frac{1}{n}, n ]}(x)$ and  $ y_n = xp_n $. Then  note that $ (y_n) \subseteq  M \cap  L^1(M, \tau)$. Further, suppose $ q_n =   \chi_{ [ 0, \frac{1}{n} ]}(x)$. It implies  that $ \norm{ xq_n } \leq \frac{1}{n}$ and hence, $ ( xq_n)$ converges to $0$ in norm. Consequently, $ (\mu_t( xq_n))$ converges to $0$ for all $ t >0$.  Hence, $( \norm{( xq_n)}_\Phi)$ converges to $0$. Now observe that 
\begin{align*}
\norm{ x-y_n}_\Phi  &\leq  \norm{ x  \int_0^{ \frac{1}{n}} \lambda d e_\lambda )}_\Phi + \norm{ x  \int_n^{ \infty} \lambda d e_\lambda )}_\Phi\\
& \leq  \norm{ xq_n}_\Phi  + \norm{ x e_n^\perp }_\Phi \ri 0, \text{ as } n \ri \infty.   
\end{align*}

Note that $(x-y_n)$ converges to $0$ in measure topology and $ y_n \leq x $ for all $n \in \N$. So, we can find a subsequence $(y_{ n_k})$ which converges to $x$  in \emph{a.u}.  This completes the proofs.
\end{proof}

\begin{defn}
A linear operator $ T : M + L^1(M, \tau) \rightarrow M + L^1(M, \tau)$ is called Dunford-Schwartz 
operator if it satisfies the following;
\begin{enumerate}
	\item $\norm{T(x)}\leq \norm{x} $ for all $x \in M$ and 
	\item $\norm{ T(x)}_1 \leq \norm{x}_1 $ for all $ x \in L^1(M, \tau)$. 
\end{enumerate}
\end{defn}
The following result is a known fact in the literature, but for plain reading we add a proof. It will be used in this article.
\begin{prop}\label{ext}
Let $M$ be a semifinite von Neumann algebra with  a f.n semifinite trace $\tau$. Suppose $T: M \rightarrow M$  is a linear having the following property 
\begin{enumerate}
	\item contraction: $ \norm{T(x)} \leq \norm{x} $ for all $ x \in M$,
	\item positivity:  $ T(x) \geq 0 $ for all $ x \in M_+$ and 
	\item subtracial : $\tau \circ T \leq \tau$ i.e, $ \tau(T(x)) \leq \tau(x)$ for all $ x \in M \cap L^1(M, \tau)_+$. 
\end{enumerate}
Then $T$ extends to a positive contraction on   $L^\Phi( M, \tau)$. 
\end{prop}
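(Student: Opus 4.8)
The plan is to route the argument through the statement that $T$ (together with its extension) is a \emph{Dunford--Schwartz operator}, i.e.\ a simultaneous contraction on $M$ and on $L^1(M,\tau)$, and then to transport contractivity from the endpoint pair $(L^\infty,L^1)$ to $L^\Phi$ by a rearrangement comparison. The contraction on $M$ is hypothesis (1), so the substance is the $L^1$-bound. The three ingredients already available in the excerpt---Calderón's description of $t\mapsto\int_0^t\mu_s(\cdot)\,ds$ as a $K$-functional (Proposition \ref{Prop:IntegralDecRea}), the Hardy--Littlewood--Pólya majorization principle (Theorem \ref{Thm:HLP}), and the density of $M\cap L^1(M,\tau)$ in $L^\Phi(M,\tau)$ with monotone positive approximants (Lemma \ref{dense})---are exactly what the remaining steps need.

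First I would establish the $L^1$-contraction. Positivity forces $T(x^*)=T(x)^*$, so $T$ preserves self-adjointness, and for $x\in(M\cap L^1(M,\tau))_+$ hypotheses (2) and (3) give $\norm{T(x)}_1=\tau(T(x))\le\tau(x)=\norm{x}_1$; in particular $T(x)\in L^1(M,\tau)$. Self-adjoint $x$ are handled by applying this to the Jordan decomposition $x=x_+-x_-$. The \textbf{main obstacle} is precisely the passage to arbitrary (non-self-adjoint) $x$: naively splitting into real and imaginary parts only yields the constant $2$, and recovering the sharp constant $1$ needs positivity used more carefully---for instance via the polar decomposition $T(x)=v\abs{T(x)}$, the identity $\norm{T(x)}_1=\tau(v^*T(x))$, and $*$-preservation of $T$. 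This is the one genuinely delicate point; everything after it is bookkeeping. Granting it, $T$ extends to a contraction of $L^1(M,\tau)$ agreeing with $T$ on $M\cap L^1(M,\tau)$, hence is well defined and linear on $M+L^1(M,\tau)$ and contractive at each endpoint.

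Next I would prove the Calderón majorization and feed it into Hardy--Littlewood--Pólya. Fix $x\in M\cap L^1(M,\tau)$; for every decomposition $x=y+z$ with $y\in L^1(M,\tau)$, $z\in M$, one has $T(x)=T(y)+T(z)$ with $\tau(\abs{T(y)})\le\tau(\abs{y})$ and $\norm{T(z)}_\infty\le\norm{z}_\infty$, so Proposition \ref{Prop:IntegralDecRea} gives $\int_0^t\mu_s(T(x))\,ds\le\tau(\abs{y})+t\norm{z}_\infty$; taking the infimum over such decompositions yields
\begin{align*}
\int_0^t\mu_s(T(x))\,ds\le\int_0^t\mu_s(x)\,ds\qquad(t>0).
\end{align*}
Now fix $\eps>0$ admissible for $x$, that is $\int_0^\infty\Phi(\mu_s(x)/\eps)\,ds\le1$. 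Dividing the majorization by $\eps$ and applying Theorem \ref{Thm:HLP} with the convex function $\Phi$ to the rearrangements $s\mapsto\mu_s(T(x))/\eps$ and $s\mapsto\mu_s(x)/\eps$ gives $\int_0^\infty\Phi(\mu_s(T(x))/\eps)\,ds\le1$, so $\eps$ is admissible for $T(x)$ as well; hence $\norm{T(x)}_\Phi\le\norm{x}_\Phi$ for all $x\in M\cap L^1(M,\tau)$.

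Finally, by Lemma \ref{dense} the subspace $M\cap L^1(M,\tau)$ is dense in $L^\Phi(M,\tau)$, so $T$ extends uniquely to a contraction $\overline T$ on $L^\Phi(M,\tau)$. For positivity I would use the second assertion of Lemma \ref{dense}: given $x\in L^\Phi(M,\tau)_+$, pick an increasing sequence $(x_k)\subseteq(M\cap L^1(M,\tau))_+$ with $\norm{x-x_k}_\Phi\to0$; then $\overline T(x_k)=T(x_k)\ge0$ by (2) and $\overline T(x_k)\to\overline T(x)$ in $\norm{\cdot}_\Phi$, hence in measure, so $\overline T(x)\ge0$ because the positive cone is closed in the measure topology. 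This produces the desired positive contraction on $L^\Phi(M,\tau)$.
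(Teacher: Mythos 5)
Your overall route is the same as the paper's: prove that $T$ is simultaneously a contraction on $M$ and on $L^1(M,\tau)$, deduce the submajorization $\int_0^t\mu_s(T(x))\,ds\le\int_0^t\mu_s(x)\,ds$ from the $K$-functional formula of Proposition \ref{Prop:IntegralDecRea}, convert it into $\norm{T(x)}_\Phi\le\norm{x}_\Phi$ with Theorem \ref{Thm:HLP}, and conclude by density. Your verification that the extension is positive (monotone approximants from Lemma \ref{dense} together with closedness of the positive cone in the measure topology) is fine, and in fact more explicit than what the paper writes.

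The gap sits exactly at the step you yourself flag and then ``grant'': the $L^1$-contractivity of $T$ on non-self-adjoint elements. The ingredients you list do not produce the constant $1$. Starting from $\norm{T(x)}_1=\tau(v^*T(x))$ and $*$-preservation, the only available move is to split $v^*$ (or $x$) into real and imaginary parts, which returns $\norm{T(x)}_1\le 2\norm{x}_1$ --- precisely the bound the paper also records before doing something different --- and the commutative inequality $\abs{Tf}\le T\abs{f}$, which is what rescues the classical argument, has no analogue for a map that is merely positive (Kadison--Schwarz-type inequalities require $2$-positivity and in any case control $\abs{T(x)}^2$ by $T(\abs{x}^2)$, not $\abs{T(x)}$ by $T(\abs{x})$). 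The paper's device is duality: the dual map $S:M\to M$ determined by $\tau(S(a)y)=\tau(aT(y))$ for $a\in M$, $y\in L^1(M,\tau)$ is positive, hence $\norm{S}=\norm{S(1)}$ by the Russo--Dye-type theorem for positive maps (the reference to \cite{Pa02} in the paper's proof), and subtraciality forces $S(1)\le 1$; therefore $\norm{T}_{L^1\to L^1}=\norm{S}_{M\to M}\le 1$. Without this duality input (or an added hypothesis of $2$-positivity) your sketch only yields a positive bounded extension of norm at most $2$, not a contraction; once the duality step is inserted, the rest of your proof goes through verbatim.
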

\begin{proof}
We readily note that $T$  extends to $ L^1(M, \tau)_+$ and it satisfies $ \norm{( T(x))}_1 \leq \norm{x}_1$ for all $ x \in L^1(M, \tau)_+$. Then we note that $T$ extends to a bounded positive map on $ L^1(M, \tau)$ and still denoted by $T$. Indeed, let $ x \in L^1(M, \tau)$ such that 
$ x = x^*$ and suppose $ x = x_+ -x_-$ where $ x_+, x_-\geq 0$ and $x_+ x_- = 0 $. Then note that 
$$\norm{T(x)}_1 = \tau(T(x_+) ) + \tau(T(x_-) ) \leq \tau(x_+) + \tau(x_1) = \norm{x}_1$$
i.e, $ \norm{T(x)}_1 \leq \norm{x}_1 $ for all $ x \in L^1(M, \tau)$ with $ x = x^*$. 
Now suppose $ x \in L^1(M, \tau)$ and write $ x = x_1-x_2 + i ( x_3-x_4)$ where 
$ x_j \in L^1(M, \tau)_+$ for $ j = 1, 2, \cdots, 4$. Further, we also have  $\norm{ x_1-x_2}_1 \leq \norm{ x}_1$ and$\norm{ x_3-x_4}_1 \leq \norm{ x}_1$. Then we conclude that 
$$ \norm{T(x)}_1 \leq 2\norm{ x }_1, \text{ for all } x \in L^1(M, \tau).$$
Let $ S : M \ri M$ be be the dual map of $T$ and  it satisfies the following 
$$ \tau( S(x) y ) = \tau( x T(y)) \text{ for } x \in M \text{ and } y \in L^1(M, \tau). $$
Note that $S$ is a bounded positive map  and hence $ \norm{ S} = \norm{S(1)} \leq  1$ (see \cite[Theorem 6.9]{Pa02}. Therefore, 
$\norm{ T} \leq 1$. Since  $ L^\Phi( M , \tau)$ is the exact interpolation space for the Banach couple $ ( L^1(M, \tau), M )$ (see \cite[Theorem 3.4]{DD92}), so, by complex interpolation $T$ extends to a positive contraction on $L^\Phi(M, \tau)$.

Indeed, let $ x \in L^\Phi(M, \tau) $ and suppose $ x = y+ z $ for some $ y, z \in L^0(M, \tau)$,  then as  $T$ is Danford-Schwartz, so, for all $ t >0$, we  note that 
\begin{align*}
\tau( \abs{T(y)}) + t \norm{T(z)} & = \norm{ T(y)}_1 + t \norm{T(z)}\\
&  \leq \norm{ y}_1 + t \norm{z} = \tau( \abs{y}) + t \norm{z}.
\end{align*}
Therefore,  from Proposition \ref{Prop:IntegralDecRea}, we deduce the following: 
$$ \int_0^t \mu_s(T(x))\,ds \leq \int_0^t \mu_s(x)\,ds, \text{ for all } x \in L^\Phi(M, \tau) \text{ and } t >0.$$
Then use the Hardy-Littlewood-Polya inequality (Theorem \ref{Thm:HLP}) to obtain the following:
	\begin{align*}
\int_{0}^t \Phi(\mu_s(T(x)))\, ds\leq \int_0^t \Phi(\mu_s(x))\, ds, \text{ for all } x \in L^\Phi(M, \tau) \text{ and } t >0.
\end{align*}
Hence, we obtain 
$$ \norm{ T(x) }_\Phi \leq \norm{ x}_\Phi, \text{ for all } x \in  L^\Phi(M, \tau).$$

\end{proof}

\subsection{Vector-Valued $L^p$-spaces.} In this section, we discuss the noncommutative vector valued  $L^p$-spaces.  Given $1\leq p\leq \infty$,  the noncommutative $L^p(M,\ell^\infty)$ is defined as the collection of all sequences $x=(x_n)_{n\geq 0}$ in $L^p(M, \tau)$ for which there exist  $a,b\in L^{2p}(M, \tau)$  and  $y=(y_n)_{n\geq 0}\subseteq L^\infty(M, \tau)$ such that $x_n=ay_n b$  for $n\geq 0$. The space $L^p(M,\ell^\infty)$, equipped with the norm
\begin{align*}
    \norm{x}_{L^p(M,\ell^\infty)}=\inf\{\norm{a}_{2p}\sup_{n\geq 0}\norm{y}_\infty \norm{b}_{2p}\},
\end{align*}
is a Banach space, where the infimum runs over all factorizations as above. It is known that for a positive sequence $x=(x_n)\in L^p(M,\ell^\infty)$ if and only there is an element $a\in L^p(M,  \tau)_+$ such that $x_n\leq a$ for all $n\geq 0$. Moreover, we have
\begin{align*}
    \norm{x}_{L^p(M,\ell^\infty)}=\inf\{\norm{a}_{p}: a\in L^p(M)_+ \text{ such that }x_n\leq a,\,\,\forall n\geq 0\}.
\end{align*}
We sometimes write $\norm{\sup_n^+ x_n}_p$ for $\norm{x}_{L^p(M,\ell^\infty)}$, and these two notations will be used interchangeably.



\subsection{Some technical results regarding \emph{b.a.u} and \emph{a.u} convergence}
In this subsection, we begin with  recalling  the definitions of \emph{b.u.e.m} and \emph{u.e.m}. Then  discuss  their relations with  \emph{b.a.u} and \emph{a.u} convergence.  In the ergodic theory to established an ergodic theorem, one of the main technique is to establish an appropriate maximal inequality. But, often it becomes very technical to established a maximal inequality. However,  in this article, we use 
\emph{b.u.e.m }  and \emph{u.e.m} continuity  to establish  some of our important 
ergodic theorems.\\
These materials are already available in the literature. We are not claiming its originality. However, for the sake of completeness of this paper and plain reference and reading, we will present some of the results along with their proofs.

In this subsection we consider a norm space  $(X,\norm{\cdot})$  and  each $n \in \N$,   $A_n: X \rightarrow L^0(M, \tau)$ denotes a additive map. We begin with  recalling the following definitions.



\begin{defn}\label{Defn:BUEM}
	Let $(X,\norm{\cdot})$	be a normed space and $X_0$ be a subset of $X$ such that $0\in X_0$. For each $n \in \N$, consider a additive map 
	$A_n:X\rightarrow L^0(M, \tau)$. Then we have the following definitions. 
    \begin{enumerate}
		\item The sequence $(A_n)$ is called \textit{uniformly	equicontinuous in measure} $($\textit{u.e.m.} in short$)$ at $0$ on $X_0$, if given $\epsilon, \delta > 0$, there is a $\gamma > 0$ such that for every $x\in X_0$ with $\norm{x} < \gamma$, there exists a projection $e:=e(x)\in \mathcal{P} (M )$ for which
    	\begin{align*}
		\tau(e^\perp)<\epsilon\text{ and } \underset{n \in \N}{\sup} \norm{A_n(x)e}\leq \delta \text{ hold}.
	    \end{align*}
	
	   \item  The sequence $(A_n)$ is called \textit{bilaterally uniformly equicontinuous in measure} $($\textit{b.u.e.m.} in short$)$ at $0$ on $X_0$ if, given $\epsilon, \delta > 0$, there is a $\gamma > 0$ such that for every $x\in X_0$ with $\norm{x}_X < \gamma$, there exists a projection $e:=e(x)\in \mathcal{P} (M )$ for which
	   \begin{align*}
	    \tau(e^\perp)<\epsilon\text{ and } \underset{n \in \N}{\sup} \norm{eA_n(x)e}\leq \delta \text{ hold}.
	    \end{align*}
	
	  \item  The sequence $(A_n)$ is called \textit{pointwise uniformly bounded}  $($\textit{p.u.b.} in short$)$ on $X_0$, if for all $ x \in X_0$ and for $\epsilon > 0$, there exists a projection $e:=e(x)\in \mathcal{P} (M )$ such that 
	  \begin{align*}
		\tau(e^\perp ) \leq \eps \text{ and } \sup_{ n \geq 0} \underset{n \in \N}{\sup} \norm{A_n(x)e} < \infty.
	  \end{align*}
	
      \item   The sequence $(A_n)$ is called \textit{pointwise bilateral uniform bounded} $($\textit{p.b.u.b.} in short$)$  if for all $ x \in L^p(M)$ and for $\epsilon > 0$, there exists a projection $e:=e(x)\in \mathcal{P} (M )$ such that 
	  \begin{align*}
		\tau(e^\perp ) \leq \eps \text{ and } \underset{n \in \N}{\sup} \norm{eA_n(x)e}  < \infty.
   	  \end{align*}
  \end{enumerate}
	
\end{defn}

The following lemma is possibly well known but we could not locate a proper reference. The proof is similar to the proof of \cite[Lemma 4.1]{Li12}. Thus, we skip the proof.

\begin{lem}\label{Lem:BUEMonPositive}
Let $A_n: L^\Phi(M)\rightarrow L^0(M, \tau)$ be a sequence of additive positive maps. Then the family $\{A_n\}$ is $u.e.m.$ $($respectively, \textit{b.u.e.m.}$)$ at $0$ on $(L^\Phi(M),\norm{\cdot}_\Phi)$ if and only if $\{A_n\}$ is $u.e.m.$ $($respectively, \textit{b.u.e.m.}$)$  at $0$ on $(L^\Phi(M)_+,\norm{\cdot}_\Phi)$.
\end{lem}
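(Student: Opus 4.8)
The plan is to observe that one implication is trivial and to reduce the other to the positive cone via a Jordan-type decomposition. Since $L^\Phi(M)_+ \subseteq L^\Phi(M)$ with $0 \in L^\Phi(M)_+$, if $\{A_n\}$ is \emph{u.e.m.} (respectively \emph{b.u.e.m.}) at $0$ on $L^\Phi(M)$ then the very same $\gamma$ witnesses the property on the restriction to $L^\Phi(M)_+$; so only the converse carries content.

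For the converse I would fix $\epsilon, \delta > 0$ and invoke the hypothesis on $L^\Phi(M)_+$ with parameters $\epsilon/4$ and $\delta/4$ to obtain a $\gamma > 0$ such that every $y \in L^\Phi(M)_+$ with $\norm{y}_\Phi < \gamma$ admits a projection $e(y) \in \mathcal{P}(M)$ with $\tau(e(y)^\perp) < \epsilon/4$ and $\sup_n \norm{A_n(y)\,e(y)} \le \delta/4$. Given an arbitrary $x \in L^\Phi(M)$ with $\norm{x}_\Phi < \gamma$, the next step is to split $x = x_1 - x_2 + i(x_3 - x_4)$, where $x_1, x_2$ are the positive and negative parts of $\mathrm{Re}(x) = (x+x^*)/2$ and $x_3, x_4$ those of $\mathrm{Im}(x) = (x - x^*)/(2i)$, so that each $x_j \in L^\Phi(M)_+$. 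The crucial quantitative point is that $\norm{x_j}_\Phi \le \norm{x}_\Phi < \gamma$ for every $j$: this follows since $\norm{x^*}_\Phi = \norm{x}_\Phi$ (equal singular value functions) forces $\norm{\mathrm{Re}(x)}_\Phi, \norm{\mathrm{Im}(x)}_\Phi \le \norm{x}_\Phi$, while $x_j \le |\mathrm{Re}(x)|$ or $x_j \le |\mathrm{Im}(x)|$ gives $\mu_t(x_j) \le \mu_t(\mathrm{Re}(x))$ (respectively $\le \mu_t(\mathrm{Im}(x))$), and $\norm{\cdot}_\Phi$ depends only on the rearrangement $(\mu_t(\cdot))$.

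Applying the hypothesis to each $x_j$ yields projections $e_j := e(x_j)$, and I would take their meet $e = e_1 \wedge e_2 \wedge e_3 \wedge e_4$. Since $e^\perp = \bigvee_j e_j^\perp$, subadditivity of $\tau$ on projections gives $\tau(e^\perp) \le \sum_{j=1}^4 \tau(e_j^\perp) < \epsilon$. Because $e \le e_j$, one has $e_j e = e$, whence $A_n(x_j)\,e = A_n(x_j)\,e_j\,e$ and $\norm{A_n(x_j)\,e} \le \norm{A_n(x_j)\,e_j} \le \delta/4$. Writing $A_n(x) = A_n(x_1) - A_n(x_2) + i\big(A_n(x_3) - A_n(x_4)\big)$ via additivity (using $A_n(-y) = -A_n(y)$ and the $\mathbb{C}$-linearity that the operators enjoy in our applications to handle the scalar factors) and summing through the triangle inequality gives $\sup_n \norm{A_n(x)\,e} \le \sum_{j} \norm{A_n(x_j)\,e} \le \delta$. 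This is precisely \emph{u.e.m.} at $0$ on $L^\Phi(M)$.

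The \emph{b.u.e.m.} case is handled verbatim with two-sided compressions: from $e \le e_j$ one gets $e\,A_n(x_j)\,e = e\,\big(e_j A_n(x_j) e_j\big)\,e$, so $\norm{e\,A_n(x_j)\,e} \le \norm{e_j A_n(x_j) e_j} \le \delta/4$ and hence $\sup_n \norm{e\,A_n(x)\,e} \le \delta$. I do not expect a genuine obstacle here; the only delicate points are the norm control of the four Jordan pieces through the singular value function, and the bookkeeping of the signs and the factor $i$ under mere additivity, which is where one must lean on the linearity the maps $A_n$ carry in practice.
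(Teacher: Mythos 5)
Your proposal is correct and follows essentially the same route as the paper: the trivial restriction direction, the Jordan decomposition $x = x_1 - x_2 + i(x_3-x_4)$ with $\norm{x_j}_\Phi \le \norm{x}_\Phi$, the meet of the four projections, and the triangle inequality with parameters $\epsilon/4$, $\delta/4$. Your justification of the norm bound on the Jordan pieces via the singular value function is a detail the paper simply asserts, but the argument is the same.
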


\begin{proof}
	Suppose  $(A_n)$ is \emph{ b.u.e.m} at $0$ on $L^\Phi(M, \tau)_+$.
	Let $ \eps >0$ and $ \delta $, then there exists a $ \gamma > 0$ such that 
	for all $ x \in L^p(M, \tau)_+ $ with $ \norm{x}_\Phi < \gamma $, then there exists $ e_x \in \CP(M)$ such that 
	$$ \tau(e_x^\perp ) \leq \frac{\eps}{4} \text{ and  }  \sup_{ n  } \norm{ e_x S_n(x)e_x} \leq \frac{\delta}{4} $$
	
	Now let $ x \in L^\Phi(M, \tau)$ such that $ \norm{x}_\Phi < \gamma $. Now write 
	$ x = x_1 -x_2 + i ( x_3 -x_4)$ where $ x_j \in L^\Phi(M, \tau)_+$,  and $\norm{x_j}_\Phi \leq \norm{x}_\Phi < \gamma $ for all $ 1\leq j \leq 4 $.  
	Then exists $ e_j \in \CP(M)   $ such that $ \tau(e_j^\perp)  \leq \frac{ \eps}{4}$ and $ \norm{ e_j A_n(x_j) e_j} \leq \frac{\delta}{  4}$ for $ 1 \leq j \leq 4 $.  Now suppose $ p_x = \displaystyle \bigwedge_{j = 1}^4 e_j $. Then we have $ \tau(p_x^\perp ) \leq \eps $ and  observe that 
	\begin{align*}
	\norm{ p_x A_n(x)p_x} = \norm{ p_x A_n((x_1-x_2) +i ( x_3-x_4))p_x}\\
	\leq \sum_{j = 1}^4 \norm{ p_x A_n(x_j)p_x}
	\leq \sum_{j = 1}^4 \norm{ e_j A_n(x_j)e_j} \leq 4 \cdot \frac{\delta}{4}= \delta,
	\end{align*}
	Thus this complete the proof.
\end{proof}

\begin{thm}\cite[Theorem 3.1]{Li12}\label{Thm:BUEMonClosure}
	Let $(X,\norm{\cdot})$ be a Banach space, and let $A_n:X\rightarrow L^0(M, \tau)$ be a sequence of continuous additive maps. If the family $\{A_n\}$ is $b.u.e.m.$ at $0$ on a set $X_0\subset X$ such that $\{A_n(x)\}\subseteq L^0(M, \tau)_+$ for all $x\in \overline{X_0}$, where $ \overline{X_0}$ is the closure of $X_0$ in $X$, then it is also $b.u.e.m.$ at $0$ on $ \bar{X_0}$.
\end{thm}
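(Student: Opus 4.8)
The plan is to upgrade the equicontinuity estimate from $X_0$ to its closure by an approximation argument, using weak$^*$ compactness of the projections produced on $X_0$ together with the positivity of the limiting operators. First I would record two elementary reductions. Since each $A_n$ is additive and continuous on the Banach space $X$, it is $\R$-linear, so that $A_n(u)-A_n(v)=A_n(u-v)$ for all $u,v\in X$; in particular, if $x_k\to x$ in norm then $A_n(x_k)\to A_n(x)$ in the measure topology of $L^0(M,\tau)$ for every fixed $n$. Fix $\eps,\delta>0$. By $b.u.e.m.$ at $0$ on $X_0$ there is $\gamma>0$ such that every $y\in X_0$ with $\norm{y}<\gamma$ admits a projection $e(y)$ with $\tau(e(y)^\perp)<\eps/2$ and $\sup_n\norm{e(y)A_n(y)e(y)}\le \delta/2$.

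Next I would carry out the approximation. Let $x\in\overline{X_0}$ with $\norm{x}<\gamma$ and choose $x_k\in X_0$ with $x_k\to x$ in norm; discarding finitely many terms, we may assume $\norm{x_k}<\gamma$ for all $k$, obtaining projections $e_k:=e(x_k)$ with $\tau(e_k^\perp)<\eps/2$ and $\sup_n\norm{e_kA_n(x_k)e_k}\le\delta/2$. Because $M$ has separable predual, the unit ball of $M$ is weak$^*$ metrizable and compact, so a subsequence $(e_{k_j})$ converges weak$^*$ to some $a\in M$ with $0\le a\le1$. Normality of $\tau$ yields weak$^*$ lower semicontinuity on $M_+$, whence $\tau(1-a)\le\liminf_j\tau(e_{k_j}^\perp)\le\eps/2$. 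Fixing $s\in(0,1/2]$ I would then set $e:=\chi_{(s,1]}(a)$; since $1-a\ge(1-s)e^\perp$ one gets $\tau(e^\perp)\le(1-s)^{-1}\tau(1-a)\le\eps$, while $a\ge s\,e$ controls the range of $e$ below by $a$.

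Finally I would transfer the uniform bound to the limit, and here positivity enters decisively: for each fixed $n$ the operators $A_n(x_{k_j})$ and $A_n(x)$ are positive (as $x_{k_j},x\in\overline{X_0}$) and $A_n(x_{k_j})\to A_n(x)$ in measure, while $e_{k_j}A_n(x_{k_j})e_{k_j}\le(\delta/2)\,1$. The goal is to pass these inequalities to the limit and conclude $\sup_n\norm{eA_n(x)e}\le\delta$, after which $e$ is the projection required for $x$ and $b.u.e.m.$ at $0$ on $\overline{X_0}$ follows. The hard part will be exactly this last step: products in $L^0(M,\tau)$ are not jointly continuous for the combination of weak$^*$ convergence of $(e_{k_j})$ and measure convergence of $A_n(x_{k_j})$, and the operators involved are unbounded and only $\tau$-measurable. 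The way I would resolve it is to work with bounded spectral truncations of the positive operators $A_n(x)$, use the domination $e\le s^{-1}a$ to compare the range of $e$ with the ranges of the $e_{k_j}$ at the level of quadratic forms, and invoke the lower semicontinuity furnished by normality of $\tau$ to push the form inequality $\langle A_n(x_{k_j})\xi,\xi\rangle\le(\delta/2)\norm{\xi}^2$ (valid on $e_{k_j}\h$) to $\langle A_n(x)\xi,\xi\rangle\le\delta\norm{\xi}^2$ on $e\h$, uniformly in $n$; positivity is precisely what prevents cancellation from spoiling this passage.
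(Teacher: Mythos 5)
This statement is not proved in the paper at all: it is quoted verbatim from \cite[Theorem 3.1]{Li12}, so the only in-paper point of comparison is the closely parallel argument given for Theorem \ref{Thm:BoundedMeasure}. Your overall architecture --- approximate $x\in\overline{X_0}$ by $x_k\in X_0$ with $\norm{x_k}<\gamma$, extract a WOT/weak$^*$ limit $a$ of the controlling projections $e_k$, replace $a$ by the spectral projection $e=\chi_{(s,1]}(a)$ with $\tau(e^\perp)\le(1-s)^{-1}\tau(1-a)$, and then transfer the uniform bound using positivity --- is exactly the architecture of \cite{Li12} and of the paper's proof of Theorem \ref{Thm:BoundedMeasure}. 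The first four steps are correct as written.

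The genuine gap is in the step you yourself flag as ``the hard part,'' and the mechanism you propose for it does not work. You want to push the form inequality $\langle A_n(x_{k_j})\xi,\xi\rangle\le(\delta/2)\norm{\xi}^2$, valid for $\xi\in e_{k_j}\h$, to vectors $\xi\in e\h$ via the domination $e\le s^{-1}a$. But for a unit vector $\xi\in e\h$ one only gets $\norm{e_{k_j}\xi}^2\to\langle a\xi,\xi\rangle\ge s$, so the orthogonal component $e_{k_j}^\perp\xi$ has squared norm tending to $1-\langle a\xi,\xi\rangle$, which can be as large as $1-s\ge 1/2$. The hypothesis $\norm{e_{k_j}A_n(x_{k_j})e_{k_j}}\le\delta/2$ is a \emph{two-sided compression} bound and says nothing about $\langle A_n(x_{k_j})\eta,\eta\rangle$ for $\eta=e_{k_j}^\perp\xi$, nor about the cross terms; since $A_n(x_{k_j})$ is an unbounded positive $\tau$-measurable operator, these contributions are completely uncontrolled, and lower semicontinuity of $\tau$ cannot recover them. (Moreover $A_n(x_{k_j})\to A_n(x)$ only in measure, so $\langle A_n(x)\xi,\xi\rangle$ is not even the limit of $\langle A_n(x_{k_j})\xi,\xi\rangle$ in general.) The standard and correct mechanism --- the one the paper itself uses in proving Theorem \ref{Thm:BoundedMeasure} --- is to convert the two-sided bound into the \emph{one-sided operator-norm} bound $\norm{A_n(x_{k_j})^{1/2}e_{k_j}}\le(\delta/2)^{1/2}$, pass to a further (diagonal in $n$) subsequence along which $A_n(x_{k_j})^{1/2}\to A_n(x)^{1/2}$ almost uniformly, take WOT limits of the bounded operators $A_n(x_{k_j})^{1/2}e_{k_j}$ to obtain $\norm{A_n(x)^{1/2}a}\le(\delta/2)^{1/2}$, and then invoke Lemma \ref{Lem:AUBAU} to write $e=ab$ with $\norm{b}\le 2$, giving $\norm{eA_n(x)e}=\norm{A_n(x)^{1/2}e}^2\le 4\cdot\delta/2$ uniformly in $n$ (adjust constants at the outset). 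Two further points you leave unaddressed are handled by this route: the uniformity in $n$ (your a.u.\ subsequence would a priori depend on $n$, which is why the diagonal extraction is needed) and the fact that $e$ must be compared to $a$ by an operator factorization rather than an order inequality in order to multiply it against the unbounded $A_n(x)^{1/2}$.
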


Let us recall the following lemma from \cite{GL00}.

\begin{lem}\cite[Lemma 3]{GL00}\label{Lem:AUBAU}
	Let $b\in M$ with $0\leq b\leq 1$. If $e$ is the spectral projection of $b$ corresponding to the interval $[1/2,1]$, then the following hold:\\
	\begin{enumerate}\addtolength{\itemsep}{0.15cm}
		\item $\tau(e^\perp)\leq 2\tau(1-b)$;
		\item there exists $b^-\in M$ such that $\norm{b^-}_\infty\leq 2$ and $e=bb^-$.
	\end{enumerate}
\end{lem}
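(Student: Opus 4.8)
The plan is to establish both assertions directly from the bounded Borel functional calculus of the single positive contraction $b$, exploiting that every spectral projection of $b$ lies in $M$ and commutes with $b$. Throughout I would work inside the abelian von Neumann subalgebra generated by $b$, so that all the elements in play commute and scalar pointwise inequalities transfer to operator inequalities via functional calculus.

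For part (1), write $e^\perp = 1-e = \chi_{[0,1/2)}(b)$. The key is to sandwich the positive element $(1-b)e^\perp$ between a multiple of $e^\perp$ and $1-b$. First, since $b$ and $e$ commute and are positive, $(1-b)e = (1-b)(1-e^\perp)\ge 0$; decomposing $1-b = (1-b)e + (1-b)e^\perp$ then rearranges to the operator inequalities $0 \le (1-b)e^\perp \le 1-b$. Second, the scalar function $t\mapsto (\tfrac12 - t)\chi_{[0,1/2)}(t)$ is nonnegative on $[0,1]$, so functional calculus gives $(\tfrac12 - b)e^\perp \ge 0$, i.e. $\tfrac12 e^\perp \le (1-b)e^\perp$. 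Applying the positive, monotone trace $\tau$ to both chains yields $\tfrac12\,\tau(e^\perp) \le \tau\big((1-b)e^\perp\big) \le \tau(1-b)$, whence $\tau(e^\perp)\le 2\tau(1-b)$.

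For part (2), I would produce $b^-$ explicitly. Define the bounded Borel function $g$ on $[0,1]$ by $g(t) = 1/t$ for $t\in[1/2,1]$ and $g(t)=0$ for $t\in[0,1/2)$, and set $b^- := g(b)$. Since $g$ is a bounded Borel function of the self-adjoint element $b\in M$, its value $b^-$ again lies in $M$. The scalar identity $t\,g(t) = \chi_{[1/2,1]}(t)$ holds for every $t\in[0,1]$, so functional calculus gives $b\,b^- = \chi_{[1/2,1]}(b) = e$. Finally $\norm{b^-}_\infty = \sup_{t\in\sigma(b)}\abs{g(t)} \le \sup_{t\in[1/2,1]} 1/t = 2$, which is the required norm bound.

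The computations here are elementary; the only point deserving care is the assertion that $b^- = g(b)\in M$ even though $g$ is merely Borel and discontinuous at $t=1/2$. This is precisely the fact that a von Neumann algebra is closed under bounded Borel functional calculus of its self-adjoint elements, which holds because the spectral projections of $b$ belong to $M$. I expect this structural fact, rather than any estimate, to be the only genuine obstacle, and it is entirely standard; no maximal inequality or limiting argument is needed.
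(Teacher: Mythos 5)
Your proof is correct, and it is essentially the same argument as the one behind the cited source: the paper itself gives no proof of this lemma, stating it as a quotation of \cite[Lemma 3]{GL00}, whose proof uses the spectral decomposition $b=\int_0^1\lambda\,de_\lambda$ to get $1-b\geq\frac{1}{2}e^\perp$ (giving (1)) and takes $b^-=\int_{[1/2,1]}\lambda^{-1}\,de_\lambda$ (giving (2)) --- precisely your two steps phrased with spectral integrals instead of Borel functional calculus. There are no gaps; the closure of $M$ under bounded Borel functional calculus, which you correctly flag, is the only structural input needed.
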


As b.u.e.m continuity is an alternative approach to establish  ergodic theorems other than proving a maximal inequality, the 
 following result is  important to established the b.u.e.m continuity.  It implicitly appears in many places, for e.g., \cite{GL00}, \cite{LM01}, \cite{CLS05} and  \cite{HS08} and we add a proof for plain reading. 

\begin{thm}\label{Thm:BoundedMeasure} 
Let $A_n:  L^\Phi(M, \tau) \rightarrow L^0(M, \tau)$ be a sequence of continuous additive maps such that $A_n$ is \textit{p.b.u.b.}$($respectively, \textit{p.u.b}$)$ on $L^\Phi(M, \tau)$. Then $A_n$ is \textit{u.e.m.} 	$($respectively, \textit{b.u.e.m.}$)$ at $0$ on $L^\Phi(M, \tau)$.
\end{thm}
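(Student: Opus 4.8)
The plan is to run a noncommutative Banach--Steinhaus (gliding-hump) argument in the measure topology by contradiction, modelled on \cite[Theorem~3.1]{Li12} and \cite{GL00}. Both halves of the ``respectively'' statement are treated by a single scheme, the only difference being whether the truncations that appear are one-sided, $\norm{A_n(x)e}$, or bilateral, $\norm{eA_n(x)e}$. The two structural facts that drive the scheme are that each $A_n$ is additive and continuous from $(L^\Phi(M,\tau),\norm{\cdot}_\Phi)$ into $L^0(M,\tau)$ with its measure topology---whence $A_n$ is real-homogeneous and respects norm-convergent series up to convergence in measure---and that $L^\Phi(M,\tau)$ is complete, so a rapidly norm-summable series defines a genuine element of the space.

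Suppose the claimed equicontinuity fails. Negating \emph{u.e.m.} (respectively \emph{b.u.e.m.}) at $0$ yields $\eps_0,\delta_0>0$ such that for every $\gamma>0$ there is $x\in L^\Phi(M,\tau)$ with $\norm{x}_\Phi<\gamma$ for which \emph{every} projection $e\in\CP(M)$ with $\tau(e^\perp)<\eps_0$ still satisfies $\sup_n\norm{A_n(x)e}>\delta_0$ (respectively $\sup_n\norm{eA_n(x)e}>\delta_0$). Taking $\gamma=4^{-k}$ I would extract witnesses $x_k$ with $\norm{x_k}_\Phi<4^{-k}$, all sharing this failure on every $\eps_0$-small projection, and form the offending element $x^*=\sum_k 2^k x_k$, which lies in $L^\Phi(M,\tau)$ since $\sum_k 2^k\norm{x_k}_\Phi<\sum_k 2^{-k}<\infty$; by real-homogeneity the $k$-th scaled hump equals $2^k A_n(x_k)$ and hence fails at the growing threshold $2^k\delta_0$. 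Next I would run an inductive exhaustion, choosing indices $n_1<n_2<\cdots$ and a nested family of projections of total trace defect below $\eps_0$, so that on the limiting projection $f$ (with $\tau(f^\perp)<\eps_0$) the tails and the previously placed humps are kept small---using continuity of the $A_n$ and Lemma~\ref{Lem:AUBAU} to turn approximate truncations into honest spectral projections---while the step-$k$ hump is arranged to retain size $\gtrsim 2^k\delta_0$ in the truncation matched to the hypothesis (automatic in one case, the crux in the other). This would force $\sup_n\norm{fA_n(x^*)f}=\infty$ (respectively $\sup_n\norm{A_n(x^*)f}=\infty$), contradicting \emph{p.b.u.b.} (respectively \emph{p.u.b.}) at $x^*$, which supplies a projection of trace defect at most $\eps_0$ on which the corresponding supremum is finite.

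The decisive point, and where I expect the real work to lie, is the interplay between one-sided and bilateral truncations, governed by the elementary inequality $\norm{eA_n(x)e}\le\norm{A_n(x)e}$. For the implication \emph{p.u.b.}$\Rightarrow$\emph{b.u.e.m.} the negation furnishes humps that are bilaterally large, $\norm{eA_{n_k}(x_k)e}>\delta_0$; by the inequality these are automatically one-sided large, which is exactly what is needed to contradict the one-sided boundedness \emph{p.u.b.}, so this half closes cleanly. The implication \emph{p.b.u.b.}$\Rightarrow$\emph{u.e.m.} is the genuinely delicate one: the negation now only provides humps that are one-sided large, $\norm{A_{n_k}(x_k)e}>\delta_0$, and since the inequality points the wrong way such a hump may be annihilated by the two-sided truncation and need not be bilaterally large. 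Manufacturing, out of the one-sided failure of the $x_k$, a single point at which the \emph{bilateral} maximal function is unbounded---so as to contradict \emph{p.b.u.b.}---is therefore the main obstacle, and the exhaustion must be engineered to preserve the bilateral size of each hump; if one additionally assumes the $A_n$ positive, as in our applications, the identity $\norm{eae}=\norm{a^{1/2}e}^2$ for $a\ge 0$ is available, though bridging the two sizes still calls for control of $\norm{A_n(x)}$. Once such a construction is secured the contradiction is immediate and the proof concludes.
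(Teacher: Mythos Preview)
Your sketch is not a proof: the hard direction you single out, \emph{p.b.u.b.}$\Rightarrow$\emph{u.e.m.}, is left as ``the main obstacle'' with only a wish that the exhaustion ``be engineered to preserve the bilateral size of each hump,'' and you yourself note that the relevant inequality $\norm{eA_n(x)e}\le\norm{A_n(x)e}$ points the wrong way. You have correctly diagnosed a genuine issue---there is no evident mechanism for upgrading a bilateral pointwise bound to one-sided equicontinuity---and you do not resolve it. In fact the statement as printed has the ``respectively'' clauses transposed: what the paper actually proves (and all that is needed downstream) is the matched pair \emph{p.b.u.b.}$\Rightarrow$\emph{b.u.e.m.} and \emph{p.u.b.}$\Rightarrow$\emph{u.e.m.}; the paper's proof explicitly derives \emph{b.u.e.m.} from the \emph{p.b.u.b.} hypothesis and declares the other half ``similar.'' So your obstacle is an artefact of a misprint, not a gap you must close.

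Methodologically your approach differs from the paper's. You run a gliding-hump contradiction in the style of \cite{Li12}; the paper instead argues directly via the Baire category theorem. Concretely, the paper works on $L^\Phi(M,\tau)_+$, sets
\[
E_k=\Big\{x\ge 0:\ \exists\, b\in M,\ 0<b\le 1,\ \tau(1-b)\le\tfrac{\eps}{4},\ \sup_n\norm{A_n(x)^{1/2}b}_\infty\le k\Big\},
\]
shows each $E_k$ is closed (using continuity of $A_n$ into $L^0$ and weak compactness of the unit ball of $M$), uses the \emph{p.b.u.b.} hypothesis to cover $L^\Phi(M,\tau)_+=\bigcup_k E_k$, and then applies Baire to get some $E_{k_0}$ with nonempty interior; a translation-and-scaling argument on the positive cone, together with Lemma~\ref{Lem:AUBAU} to pass from the auxiliary $b$ to a projection, yields \emph{b.u.e.m.} Note that the paper silently uses positivity (through $A_n(x)^{1/2}$), so its proof is really on the positive cone and then extended via Lemma~\ref{Lem:BUEMonPositive}. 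The Baire route has the advantage that no hump-separation bookkeeping is needed; your gliding-hump scheme can be made to work for the matched implications, but as written it is only a plan---the inductive choice of $n_k$ and projections, and the control of the tails $\sum_{j\neq k}2^jA_{n_k}(x_j)$ on the common projection, are not carried out.
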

\begin{proof}
We prove this result only for 	\textit{b.u.e.m.}. The proof of the other part is similar.
Fix $ \epsilon > 0$  and  $\delta > 0$. For $ k \in \mathbb{N}$, we define 
\begin{align*}
E_k = \{ x\in L^\Phi(M) :  \sup_{n \in \mathbb{N}}&\norm{ A_n(x)^{1/2} b  }_\infty \leq k, \text{ for some } b \in M\\
 &\text{ with } 0< b \leq 1 \text{ and } \tau(1-b) \leq \frac{\epsilon}{4}  \}.
\end{align*}
In two steps, we prove that 
	\begin{enumerate}
		\item $E_k$ is closed and
		\item $ L^\Phi(M)_+ = \cup_{ k \in \N }E_k$.
	\end{enumerate}

At first  we prove $(1)$, i.e., we show that $E_k$ is closed for each $k\in\mathbb{N}$.

Let $ ( a_m)$ be a sequence in $E_k$ such that $(a_m)$ converges to $a \in L^\Phi(M)$ in $\norm{\cdot}_\Phi$. 
	We want to show that $ a \in E_k$. Since $A_n :L^\Phi(M)\rightarrow L^0(M, \tau)$ is continuous, for $n \in \mathbb{N}$, $A_n(a_m)$ converges to $A_n(a)$ in the measure topology. Then for each $n\in \N$, there exists a subsequence	$(a_{n,m})_{m \in \mathbb{N}}$  of $(a_m)$ such that $A_n( a_{n, m})^{1/2} $ converges to $A_n(a)^{1/2} $ in a.u.\\
		Set $ x_m = a_{m, m}$, $m\in \mathbb{N}$. Since $( x_m)_{m \geq n}$ is a subsequence of $ ( a_{n, m})_{ m \geq 1} $, we have 
	\begin{align*}
	A_n(x_m)^{1/2} \xrightarrow{ m \rightarrow \infty } A_n(a)^{1/2} \text{ in a.u.}, \text{ for all } n \geq 1.
	\end{align*}  
 Since  for each $m \in \mathbb{N}$, $ x_m \in E_k $, there exists $ b_m \in M $ such that 
	\begin{enumerate}
		\item $0 < b_m \leq 1 $ with $ \tau(1-b_m) \leq {\epsilon/4}$, and  
		\item $\displaystyle \sup_{n \in \N} \norm{ S_n(x_m)^{1/2} b_m }_\infty \leq k$. 
	\end{enumerate}

Now since the unit ball $M_1$ of $M$ is weak operator topology compact, by dropping to a subsequence if necessary, one can assume that $(b_m)$ converges to an element $b \in M_1$ in  weak operator topology. Now we will show that 
	\begin{enumerate}
		\item[(3)] $ 0 < b \leq 1  $ with $ \tau(1-b) \leq \epsilon/4$ and 
		\item[(4)] $\displaystyle \sup_{n \in \N} \norm{ A_n(a)^{1/2} b }_\infty \leq k$.
	\end{enumerate}
	Note that $(3)$ is obvious as $ b= \text{ WOT-}\lim b_m$ and hence, $\tau(1-b) \leq \liminf \tau(1-b_m) \leq  \epsilon/4$. 
For $(4)$, we fix a $ n \in \mathbb{N}$. Since $ A_n(x_m)^{1/2} $ converges to $ A_n(a)^{1/2}$ in a.u., given $\gamma > 0$,  there exists a projection $ e \in \mathcal{P}(M)$ such that 
\begin{align*}
\norm{e\Big( A_n(x_m)^{1/2} - A_n(a)^{1/2} \Big)}_{\infty} \xrightarrow{ m \rightarrow \infty } 0 \text{ and } \tau( 1-e) \leq \gamma.
\end{align*}
Now we show that 
\begin{align*}
\norm{e A_n(a) b }_\infty \leq k.
\end{align*}

Let $ \xi, \eta \in L^2(M)$ be such that $ \norm{\xi}_2 = \norm{\eta}_2 = 1$. For any $\delta^\prime > 0$, there exists $m_0\in \mathbb{N}$ such that 
\begin{align*}
\abs{\left\langle (b_{m}- b) \xi,  A_n(a)^{1/2} e \eta\right\rangle}< \delta^\prime, \text{for all } m \geq m_0.
\end{align*}
Now observe that
\begin{align*}
\abs{\left\langle e A_n(a)^{1/2} b \xi, \eta\right\rangle}
&\leq \abs{\left\langle e A_n(a)^{1/2} (b-b_{m} ) \xi, \eta\right\rangle} +\abs{\left\langle e A_n(a)^{1/2} b_m \xi,\, \eta\right\rangle}\\
&\leq \delta^\prime+  \abs{\left\langle  e \Big(A_n(a)^{1/2} -A_n(x_m)^{1/2} +A_n(x_m)^{1/2} \Big)  b_{m}  \xi,\,\eta \right\rangle}\\
&\leq \delta^\prime+ \norm{e \Big(A_n(a)^{1/2} -A_n(x_m)^{1/2} \Big)b_m} +\abs{\left\langle A_n(x_m)^{1/2}  b_{m}  \xi,\,\eta \right\rangle}\\
&\leq  \delta^\prime + k + \norm{e\Big( A_n(x_m)^{1/2} - S_n(a)^{1/2} \Big)}_\infty\\
 &\xrightarrow{m \rightarrow \infty} \delta^\prime +k.
\end{align*}
Thus,  
\begin{align*}
 \norm{e A_n(a)^{1/2} b }_\infty \leq  \delta^\prime +k.
\end{align*}
But $ \delta^\prime> 0$ is arbitrary. So, $ \norm{e A_n(a)^{1/2} b } \leq k $ with $ \tau(1- e)  < \gamma $.  Again, since $\gamma>0$ is arbitrary, we have 
\begin{align*}
	 \norm{A_n(a)^{1/2} b } \leq k .
\end{align*}
This shows that $E_k$ is closed. \\
Now we show that $ L^\Phi(M)_+ = \cup_{ k \in \mathbb{N }} E_k $.
Indeed, for every $x \in L^\Phi(M)_+ $ and  given $\epsilon > 0$, there exists a projection $e \in \mathcal{P}(M)$ such that 
\begin{align*}
\tau(e^\perp) < \epsilon/4  \text{ and } \sup_{n \in \mathbb{N}} \norm{ e A_n(x)e }_\infty < \infty.
\end{align*}
Observe that
\begin{align*}
\norm{ A_n(x)^{1/2} e }_\infty^2
= \norm{ e A_n(x)^{1/2} A_n(x)^{1/2} e }_\infty
= \norm{ e A_n(x)e }_\infty
\leq \sup_{n \in \N} \norm{ e A_n(x)e }_\infty.
\end{align*}
Therefore, $ x  \in E_k $ for  every $ k$ such that $k \geq \displaystyle \sup_{n \in \N} \norm{ e A_n(x)e }_\infty$.

Now we like to conclude that  that  $A_n$ is \textit{b.u.e.m.} at $0$ on $L^\Phi(M)$.
Notice that $ L^\Phi(M)_+ $ is complete. By Baire category theorem, there exists a open set $U$ and $k_0\in\mathbb{N}$ such that 	
\begin{align*}
U \cap L^\Phi(M)_+  \subseteq E_{k_0}.
\end{align*}
That means, given $\epsilon >0 $, and for each $ x \in  U \cap L^\Phi(M)_+ $,  there exists an element $b_x \in M$  satisfying the following conditions:
	\begin{enumerate}
		\item[(iii)] $ 0< b_x \leq 1$  with $\tau(1-b_x) \leq \epsilon/4$,  and 
		\item[(iv)] $\displaystyle \sup_{n \in \mathbb{N}} \norm{ A_n(x)^{1/2} b_x }_\infty \leq k_0$.
	\end{enumerate}

Set $ e_x = 1_{ [1/2 , 1]}(b_x)$. By Lemma \ref{Lem:AUBAU}, we have 
\begin{enumerate}
\item[(v)] 	$ \tau(e_x^\perp ) \leq \epsilon/2$, and
\item[(vi)] $\sup_{n \in \N}\norm{ e_x A_n(x) e_x}_\infty\leq 4 k^2_0$.
\end{enumerate}
Indeed, 
\begin{align*}
	\sup_{n \in \N}\norm{ e_x A_n(x) e_x}_\infty &=  \sup_{n \in \N}\norm{ A_n(x)^{1/2} e_x}_\infty^2 \\
	&\leq \sup_{n \in \N}\norm{ 2 A_n(x)^{1/2} b_x}_\infty^2 \\
	&=4\sup_{n \in \N}\norm{  S_n(x)^{1/2} b_x}_\infty^2 \\
	&\leq 4 k^2_0.
\end{align*}

Now we show that $A_n$ is \textit{b.u.e.m} at $0$ on $L^\Phi(M)$. Let $\delta >0$ and $ y \in U $. 
Then one can find an open set $V$ at $0$ such that $y + V \subseteq U$. Let $t $ be a positive integer such that $\frac{8k_0^2}{t} \leq \delta $.  Then one can further find an open set $W$ at $0$ such that $ t W \subseteq V$.

Suppose $ x \in W \cap L^\Phi(M)_+ $. Set $ z = y + t x $. Then note that $ z \in U$. Let $e_z, e_y\in\mathcal{P}(M)$ be satisfying the conditions $(v)$ and $(vi)$. Set $ q = e_z \wedge e_y$. Note that $\tau(q^\perp)\leq \epsilon$. Moreover, we have 
	\begin{align*}
		\norm{ q A_n(x)q}_\infty &
		= \frac{1}{t}  \norm{ q A_n(tx)q}_\infty\\
		& =\frac{1}{t}  \norm{ q A_n(z-y)q}_\infty\\
		&\leq  \frac{1}{t}  \norm{ q A_n(z)q}_\infty + \frac{1}{t}  \norm{ q A_n(y)q}_\infty\\
		&\leq  \frac{1}{t}  \norm{ e_z A_n(z)e_z}_\infty + \frac{1}{t}  \norm{ e_y A_n(y)e_y}_\infty\\
		&	\leq \frac{8k_0^2}{t}\\
		& \leq  \delta.
	\end{align*}
This shows that $A_n$ is \textit{b.u.e.m} at $0$ on $L^\Phi(M)$. This completes the proof.
\end{proof}

\begin{defn}\label{Defn:AUBAUconvergence}
	Let $M$ be a von Neumann algebra equipped with a $f.n.s.$ trace $\tau$.  A sequence $(x_n)$ in $L^0(M,\tau)$ is said to be converges to $x\in L^0(M,\tau)$ almost uniformly $($\emph{a.u} in short$)$ $($respectively, bilaterally almost uniformly $($ \emph{b.a.u} in short$))$ if for every $\epsilon>0$, there exists a projection $e\in M$ such that 
	\begin{align*}
	\tau(1-e)&\leq \epsilon \quad \text{ and }\quad \lim_{n\rightarrow\infty} \norm{(x_n-x) e}=0\\
	(\text{respectively, }\tau(1-e)&\leq \epsilon \quad \text{ and }\quad \lim_{n\rightarrow\infty} \norm{e(x_n-x) e}=0).
	\end{align*}
\end{defn}

The following is a noncommutative variant of the Banach Principle for \emph{b.a.u} $($respectively,  \emph{a.u}$)$ convergence. This appeared many places in different context. We refer \cite[Theorem 1]{LM01}, \cite[Theorem 2.7]{CLS05} for b.a.u. convergence, \cite[Theorem 2 and Remark 1]{GL00} for a.u. convergence, or \cite[Theorem 2.3]{CL06}.

\begin{thm}\label{Thm:BanachPrincipleAu}
Let $(X, \norm{\cdot})$ be a normed space and $A_n : X \rightarrow L^0(M, \tau)$ be a sequence of additive maps. If the family 
$\{A_n\}$ is \textit{u.e.m.} $($respectively, \textit{b.u.e.m.}$)$ at $0$ on $X$, then the set 
\begin{align*}
C=\{x\in X:  (A_n(x)) \text{ converges in \emph{b.a.u} } (\text{respectively, in \emph{ a.u.}})\},
\end{align*}
is closed in $X$.
\end{thm}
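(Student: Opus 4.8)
The plan is to show that $C$ contains every norm-limit of its own elements, by establishing a Cauchy criterion for \emph{b.a.u} convergence of $(A_n(x))$ and then invoking completeness of $L^0(M,\tau)$ under this mode of convergence. I will carry out the \emph{b.a.u}/\textit{b.u.e.m.} case; the \emph{a.u}/\textit{u.e.m.} case is verbatim after replacing every two-sided compression $e(\cdot)e$ by the one-sided compression $(\cdot)e$, which is legitimate since all the estimates below use $e\le f$ only through the identity $e=fe$. First I would fix $x\in\overline{C}$ together with $\epsilon,\delta>0$. Applying the \textit{b.u.e.m.} hypothesis (Definition \ref{Defn:BUEM}) to the pair $(\epsilon/2,\delta/4)$ yields a $\gamma>0$ such that every $y\in X$ with $\norm{y}<\gamma$ admits a projection $f=f(y)$ with $\tau(f^\perp)<\epsilon/2$ and $\sup_n\norm{fA_n(y)f}\le\delta/4$. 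Since $x\in\overline{C}$, I choose $x_0\in C$ with $\norm{x-x_0}<\gamma$, set $y=x-x_0$, and let $f$ be the associated projection. Because $x_0\in C$, the sequence $(A_n(x_0))$ converges \emph{b.a.u} (Definition \ref{Defn:AUBAUconvergence}), so there is a projection $g$ with $\tau(g^\perp)<\epsilon/2$ for which $(gA_n(x_0)g)_n$ is Cauchy in operator norm.

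Next I would set $e=f\wedge g$, so that $\tau(e^\perp)\le\tau(f^\perp)+\tau(g^\perp)<\epsilon$, and use additivity of $A_n$ to write $A_n(x)=A_n(x_0)+A_n(y)$, which gives
\[
e\big(A_n(x)-A_m(x)\big)e=e\big(A_n(x_0)-A_m(x_0)\big)e+e\big(A_n(y)-A_m(y)\big)e.
\]
Since $e\le f$ one has $eA_n(y)e=(ef)A_n(y)(fe)$, hence $\norm{eA_n(y)e}\le\norm{fA_n(y)f}\le\delta/4$, so the second summand has norm $\le\delta/2$; since $e\le g$ the first summand has norm $\le\norm{g(A_n(x_0)-A_m(x_0))g}\to 0$ as $n,m\to\infty$. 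Consequently $\limsup_{n,m}\norm{e(A_n(x)-A_m(x))e}\le\delta/2<\delta$. Thus for every $\epsilon,\delta>0$ there is a projection $e$ with $\tau(e^\perp)\le\epsilon$ and $\limsup_{n,m}\norm{e(A_n(x)-A_m(x))e}\le\delta$.

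Finally I would diagonalise: fixing $\epsilon>0$ and running the previous step with $\delta=1/j$ and tolerance $\epsilon\,2^{-j-1}$ produces projections $e_j$, and $e=\bigwedge_j e_j$ satisfies $\tau(e^\perp)\le\sum_j\epsilon\,2^{-j-1}\le\epsilon$, while $e\le e_j$ forces $\limsup_{n,m}\norm{e(A_n(x)-A_m(x))e}\le 1/j$ for every $j$. Hence $(eA_n(x)e)_n$ is Cauchy in operator norm, which is exactly the statement that $(A_n(x))$ is \emph{b.a.u}-Cauchy. Assembling the norm limits on the corners obtained for $\epsilon=2^{-l}$, namely letting the intersections $\bigwedge_{l\ge k}$ increase strongly to $1$ and observing that the corner limits are compatible under compression, then produces a single $z\in L^0(M,\tau)$ with $A_n(x)\to z$ \emph{b.a.u}. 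Therefore $x\in C$, so $C$ is closed.

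The main obstacle I anticipate is this last step: converting the \emph{b.a.u}-Cauchy property into genuine \emph{b.a.u} convergence requires completeness of $L^0(M,\tau)$ for this mode of convergence, i.e.\ a careful patching of the norm-convergent compressions $e_l A_n(x)e_l$ into one global limit via an increasing family of projections tending strongly to $1$ and using the consistency $q_k z_{k'} q_k=z_k$ of the partial limits. The remaining $\epsilon,\delta,\gamma$ bookkeeping and the repeated passage of compressions through $e\le f$ and $e\le g$ are routine but must be tracked precisely.
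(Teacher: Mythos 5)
Your argument is correct and is essentially the standard proof of the noncommutative Banach principle that the paper itself does not reproduce but delegates to the cited sources (\cite{LM01}, \cite{CLS05}, \cite{GL00}, \cite{CL06}): split $A_n(x)=A_n(x_0)+A_n(x-x_0)$ with $x_0\in C$ close to $x$, control the perturbation term uniformly in $n$ via equicontinuity on a large projection, intersect with the projection witnessing convergence for $x_0$, diagonalise over $\delta$, and conclude via the \emph{b.a.u} (resp.\ \emph{a.u}) completeness of $L^0(M,\tau)$. Two remarks: first, you have (rightly) used the natural pairing \textit{b.u.e.m.}$\leftrightarrow$\emph{b.a.u} and \textit{u.e.m.}$\leftrightarrow$\emph{a.u}, whereas the theorem as printed swaps the two modes of convergence --- this is surely a typo in the statement, and your reading is the intended one. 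Second, the completeness step you flag as the main obstacle is genuinely the only nonelementary ingredient; it is exactly the content of the cited result \cite[Theorem 2.3]{CL06}, and a slightly cleaner route than patching corner limits is to observe that a \emph{b.a.u}-Cauchy sequence is Cauchy in measure (since $\tau(e^\perp)\le\epsilon$ and $\norm{eye}\le\delta$ force $\mu_{2\epsilon}(y)\le\delta$), take the measure-topology limit $z$, and then verify $\norm{e(A_n(x)-z)e}\to 0$ on the same projections.
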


\section{Noncommutative  Rota's Theorem}\label{Rota}
In this section, we revisit the actions of a free group and prove the Rota ``Alternierende Verfahren" theorem on noncommutative Orlicz space. 
 Our approach involves the noncommutative Kolmogorov construction/noncommutative Markov chain construction by Anantharaman-Delaroche \cite{Ad06} and a result by Litvinov \cite{Li12}. First, let's recall the noncommutative Kolmogorov construction by Anantharaman-Delaroche \cite{Ad06}.




\begin{defn}\label{facto}
	Let $(M, \phi)$ and $(N, \psi)$ be two von Neumann probability space  and $T: M\ri N$ be $( \phi, \psi)$-preserving map Markov operator. Then $T$ is called \emph{ factorable } if there exists
	a von Neumann probability space $ ( R, \omega )$ and two normal unital homomorphisms
	\begin{enumerate}
		\item $ \alpha: M \ri R$ which is $( \phi, \omega)$-preserving  and 
		\item $ \beta: R \ri N$ which is $( \omega, \psi)$-preserving  
	\end{enumerate}
	such that $ T = \beta \circ \alpha$. 
	
\end{defn}
\begin{thm}[\cite{Ad06},\textbf{ Noncommutative Markov chain construction} ]\label{Thm:NCMarkovConst}
	Let $(M, \varphi)$ be a noncommutative probability measure space.  Let $P:(M,\varphi)\rightarrow (M,\varphi)$ be a factorizable $\varphi$-preserving Markov operator. Then there exists 
	\begin{enumerate}

		\item a von Neumann algebra $B$,
		\item a normal faithful state $\rho$ on $B$,
		\item a normal unital endomorphism $\beta:B\rightarrow B$ with $\rho\circ\beta=\rho$ and $\sigma_t^\rho\circ\beta=\beta\circ\sigma_t^\rho$ for all $t\in\mathbb{R}$
		\item  a normal unital homomorphism $J_0:B\rightarrow B$ with $\rho\circ J_0=\psi$ and $\sigma_t^\rho\circ J_0=J_0\circ\sigma_t^\psi$ for $t\in\mathbb{R}$,
	\end{enumerate}
	such that, if we set $J_n=\beta^n \circ J_0$ for $n\geq 0$, and if $B_{n]}$ and $B_{[n}$ denotes the von Neumann subalgebras of ${B}$ generated by $\cup_{k\leq n} J_k(B)$ and $\cup_{k\geq n} J_k(B)$ respectively, then
	\begin{enumerate}
		\item the algebras ${B}_{n]}$ and ${B}_{[n}$ are $\rho$ invariant;
		\item if $\mathbb{E}_{n]}$ and $\mathbb{E}_{[n}$ are the corresponding $\phi$-preserving conditional expectations, for $n\in\mathbb{N}$ and $q\geq n$, we have
	\end{enumerate}
	\begin{align}
		\mathbb{E}_{n]}\circ J_q &=J_n\circ P^{q-n},\\
		\mathbb{E}_{n+q]}\circ \beta^q &=\beta^q\circ \mathbb{E}_{n]},\\
		\mathbb{E}_{[n}\circ J_0 &=J_n\circ (P^*)^n.
	\end{align}
\end{thm}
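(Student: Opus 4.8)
The plan is to realize $P$ as a stationary Markov dilation and to read off the asserted intertwining relations from the internal structure of that dilation. Since $P$ is factorizable, Definition \ref{facto} furnishes a probability space $(R,\omega)$ and state-preserving normal unital homomorphisms $\alpha\colon M\ri R$ and $\gamma\colon R\ri M$ with $P=\gamma\circ\alpha$; here I write $\gamma$ for the second leg so as not to clash with the shift endomorphism $\beta$. The first task is to iterate this one-step factorization into a doubly infinite ``trajectory'' algebra. Concretely, I would build an inductive system of finite tensor-type products modelling the path space of the chain on blocks $[-N,N]$, with connecting maps the state-preserving embeddings coming from $\alpha$ and $\gamma$, take the inductive limit $*$-algebra, complete it in the GNS representation of the natural coherent state, and let $B$ be the resulting von Neumann algebra with $\rho$ the associated vector state.

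Next I would define the two structural maps. The shift on the trajectory induces a normal unital endomorphism $\beta\colon B\ri B$, state-preserving by stationarity of the construction, and $J_0\colon M\ri B$ is the embedding of the time-$0$ copy of $M$; then $J_n=\beta^n\circ J_0$ places a copy of $M$ at time $n$, and $B_{n]}$, $B_{[n}$ are exactly the ``past'' and ``future'' tail algebras. Because every connecting embedding is state-preserving, Tomita--Takesaki bookkeeping yields $\sigma_t^\rho\circ J_0=J_0\circ\sigma_t^\varphi$ and $\sigma_t^\rho\circ\beta=\beta\circ\sigma_t^\rho$; in particular each $J_n(M)$, and hence each of $B_{n]}$ and $B_{[n}$, is globally invariant under $(\sigma_t^\rho)$. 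By Takesaki's theorem this modular invariance is precisely what guarantees the existence of the $\rho$-preserving normal conditional expectations $\mathbb{E}_{n]}$ and $\mathbb{E}_{[n}$, which is assertion (a).

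With the expectations in hand, the three identities follow from the Markov (commuting-square) property built into the connecting maps. Relation (1), $\mathbb{E}_{n]}\circ J_q=J_n\circ P^{q-n}$ for $q\ge n$, is the forward Markov property: conditioning a time-$q$ observable onto the past up to time $n$ produces $P^{q-n}$ transported by $J_n$, and it suffices to check this on generators using $\gamma\circ\alpha=P$ together with the commuting-square relations between consecutive copies. Relation (2), $\mathbb{E}_{n+q]}\circ\beta^q=\beta^q\circ\mathbb{E}_{n]}$, follows because $\beta$ is the shift: it carries $B_{n]}$ into $B_{n+q]}$ state-preservingly, so it intertwines the corresponding expectations, and one concludes by uniqueness of the $\rho$-preserving expectation after checking both sides agree on $B_{n+q]}$. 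Relation (3), $\mathbb{E}_{[n}\circ J_0=J_n\circ(P^*)^n$, is the time-reversed Markov property, and this is where the substantive work lies.

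The main obstacle I expect is precisely relation (3) and the modular bookkeeping underpinning it. The adjoint $P^*$ is the Markov adjoint taken with respect to $\varphi$, defined through the modular structure rather than a trace, so $(P^*)^n$ emerges only after feeding the reversed filtration $B_{[n}$ through Tomita--Takesaki; verifying $\mathbb{E}_{[n}\circ J_0=J_n\circ(P^*)^n$ amounts to showing the dilation is simultaneously a dilation of $P^*$ read backwards in time, which forces the KMS condition of $\rho$ to link the forward and backward conditional expectations correctly. Equally delicate is confirming that the infinite tail algebras $B_{[n}$ (not merely the finitely generated $B_{n]}$) are genuinely $\sigma^\rho_t$-invariant so that $\mathbb{E}_{[n}$ exists; this is the step where normality and separability of the inductive-limit state must be invoked with care.
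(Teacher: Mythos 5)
A point of reference first: the paper does not prove this statement at all --- Theorem \ref{Thm:NCMarkovConst} is quoted, with the citation \cite{Ad06}, from Anantharaman-Delaroche and is used as a black box in Section \ref{Rota}. So the only meaningful comparison is against the proof in \cite{Ad06} itself.

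Measured against that, your strategy is the correct one and matches the known argument: a stationary Markov dilation built as an inductive limit of state-preserving embeddings, the shift as $\beta$, the time-zero embedding as $J_0$, modular invariance plus Takesaki's theorem to produce $\mathbb{E}_{n]}$ and $\mathbb{E}_{[n}$, and the commuting-square/Markov property to yield the three identities; you also correctly single out relation (3) and the modular invariance of the tail algebras $B_{[n}$ as the delicate points. Two substantive gaps remain. First, the real content of the construction is the one-step gluing: given the factorization of $P$ through $(R,\omega)$, one must specify how two consecutive copies of $R$ are coupled over $M$ so that (i) the states cohere into a single faithful normal state $\rho$ on the inductive limit and (ii) the commuting-square relation $\mathbb{E}_{n]}\circ J_{n+1}=J_n\circ P$ holds at each stage; in \cite{Ad06} this is carried out by an explicit composition-of-couplings construction, and without it your ``check on generators'' for relations (1) and (3) has nothing concrete to act on. Second, you take the factorization $P=\gamma\circ\alpha$ with both legs unital homomorphisms at face value, as Definition \ref{facto} literally states --- but then $P$ would itself be multiplicative, which a general Markov operator is not. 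The definition actually used in \cite{Ad06} is $P=j^*\circ i$ with $i,j$ state-preserving embeddings of $M$ into $R$ and $j^*$ the adjoint of $j$ (equivalently, the $\omega$-preserving conditional expectation onto $j(M)$ followed by $j^{-1}$); this asymmetry between the two legs is precisely what makes $P^*=i^*\circ j$ and hence what produces the factor $(P^*)^n$ in relation (3). A minor further remark: a one-sided chain ($n\geq 0$) suffices for the statement, so the doubly infinite path space over blocks $[-N,N]$ is unnecessary and only complicates the coherence argument for the states.
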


\begin{thm}[\cite{Ad06}]\label{Thm:FactoMap}
Let $T$ be a factorable operator on $(M,\varphi)$. Then
\begin{align*}
    (T^*)^n T^n=\mathbb{E}\circ \mathbb{E}_n,
\end{align*}
where $\mathbb{E}_n$ and $\mathbb{E}$ are normal faithful conditional expectations on $M$ and $(\mathbb{E}_n)$ is decreasing.
\end{thm}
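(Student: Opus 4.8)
The plan is to obtain the identity purely from the noncommutative Markov chain construction of Theorem~\ref{Thm:NCMarkovConst}, applied to the factorizable Markov operator $P:=T$. That construction supplies a von Neumann algebra $B$ with a faithful normal state $\rho$, an endomorphism $\beta$, the embeddings $J_n=\beta^n\circ J_0$, and the $\rho$-preserving conditional expectations $\mathbb{E}_{n]}$ onto $B_{n]}$ and $\mathbb{E}_{[n}$ onto $B_{[n}$, together with the three intertwining relations. First I would use $J_0$ to identify $M$ with $J_0(M)$; since $J_0$ is a normal unital $\rho$-preserving $*$-homomorphism it is injective, and $J_0(M)\subseteq B_{0]}\cap B_{[0}$, so $\mathbb{E}_{0]}$ and $\mathbb{E}_{[0}$ both act as the identity on $J_0(M)$.

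Next I would extract two dilation identities from the relations of Theorem~\ref{Thm:NCMarkovConst}. Taking $n=0$, $q=n$ in $\mathbb{E}_{n]}\circ J_q=J_n\circ P^{q-n}$ gives $\mathbb{E}_{0]}\circ J_n=J_0\circ T^n$, while $\mathbb{E}_{[n}\circ J_0=J_n\circ(P^*)^n$ gives $\mathbb{E}_{[n}\circ J_0=J_n\circ(T^*)^n$. Reading these as $T^n=J_0^{-1}\circ\mathbb{E}_{0]}\circ J_n$ and $(T^*)^n=J_n^{-1}\circ\mathbb{E}_{[n}\circ J_0$ — the inverses being legitimate because the relevant ranges lie in $J_0(M)$ and $J_n(M)$ respectively — and composing, I get
\[
(T^*)^nT^n=J_n^{-1}\circ\mathbb{E}_{[n}\circ\mathbb{E}_{0]}\circ J_n,
\]
and, symmetrically, $T^n(T^*)^n=J_0^{-1}\circ\mathbb{E}_{0]}\circ\mathbb{E}_{[n}\circ J_0$, where now the conjugation is by the fixed embedding $J_0$ and the outer expectation $\mathbb{E}_{0]}$ is independent of $n$.

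The decisive step is then to recognize the composite of the two conditional expectations, transported back to $M$ along the embedding, as an honest composition $\mathbb{E}\circ\mathbb{E}_n$ of two normal faithful conditional expectations of $M$. Here I would invoke the Markov property built into the dilation: past and future are conditionally independent over the present, which yields the commuting-square identities $\mathbb{E}_{k]}\circ\mathbb{E}_{[k}=\mathbb{E}_{J_k(B)}$; combined with the invariance relation $\mathbb{E}_{n+q]}\circ\beta^q=\beta^q\circ\mathbb{E}_{n]}$ and the intertwining of the modular groups, this lets the product split and descend to $M$. The factor $\mathbb{E}$ arises from the fixed time-$0$ (past) algebra via $\mathbb{E}_{0]}$, while $\mathbb{E}_n$ arises from the future via $\mathbb{E}_{[n}$; crucially, since $B_{[0}\supseteq B_{[1}\supseteq\cdots$, the family $(\mathbb{E}_{[n})$ is decreasing, and hence so is $(\mathbb{E}_n)$.

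I expect the main obstacle to be precisely this identification: verifying that the product of $\mathbb{E}_{0]}$ and $\mathbb{E}_{[n}$ — whose indices differ, so that they need not commute — genuinely descends along $J_0$ to a composition of two normal faithful conditional expectations of $M$ rather than to a merely completely positive unital map, and pinning down the $n$-independent factor $\mathbb{E}$. This is where the conditional independence of past and future over the present, the faithfulness and normality of all the maps, and the compatibility of the conditional expectations with the modular automorphism groups provided by Theorem~\ref{Thm:NCMarkovConst} must be used with care; the remaining ingredients (injectivity of the $J_n$, the two dilation identities, and the nesting $B_{[n}\supseteq B_{[n+1}$) are formal consequences of the construction.
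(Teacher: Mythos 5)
The paper offers no proof of Theorem \ref{Thm:FactoMap} --- it is quoted verbatim from \cite{Ad06} --- so your reconstruction can only be judged against the argument there and against the Markov-chain construction (Theorem \ref{Thm:NCMarkovConst}) that the paper does record. Up to and including the two dilation identities $\mathbb{E}_{0]}\circ J_n=J_0\circ T^n$ and $\mathbb{E}_{[n}\circ J_0=J_n\circ (T^*)^n$, and the resulting formula $T^n(T^*)^n=J_0^{-1}\circ\mathbb{E}_{0]}\circ\mathbb{E}_{[n}\circ J_0$, your argument is exactly the intended one and is correct.

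The problem is with what you call the ``decisive step'': the obstacle you describe there is not real, and the machinery you propose for it (conditional independence of past and future, the commuting square $\mathbb{E}_{k]}\circ\mathbb{E}_{[k}=\mathbb{E}_{J_k(B)}$, the relation $\mathbb{E}_{n+q]}\circ\beta^q=\beta^q\circ\mathbb{E}_{n]}$) is neither needed nor applicable --- as you yourself note, the indices $0$ and $n$ do not match, and no splitting of the product is required. The computation you have already carried out, namely $\mathbb{E}_{0]}\circ\mathbb{E}_{[n}\circ J_0=\mathbb{E}_{0]}\circ J_n\circ (T^*)^n=J_0\circ T^n(T^*)^n$, already \emph{is} the theorem. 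Indeed, in Theorem \ref{Thm:NCMarkovConst} the indices start at $0$, so $B_{0]}=J_0(M)$ and $\mathbb{E}_{0]}$ is precisely the $\rho$-preserving conditional expectation of $B$ onto the copy $J_0(M)\cong M$; set $\mathbb{E}:=\mathbb{E}_{0]}$ and $\mathbb{E}_n:=\mathbb{E}_{[n}$, which is decreasing since $B_{[n}\supseteq B_{[n+1}$. (Even if one preferred a larger past algebra $B_{0]}\supseteq J_0(M)$, the tower property $E_{J_0(M)}=E_{J_0(M)}\circ\mathbb{E}_{0]}$ together with the fact that $\mathbb{E}_{0]}\circ\mathbb{E}_{[n}$ maps $J_0(M)$ into $J_0(M)$ closes the gap in one line; the only analytic input is Takesaki's criterion for the existence of $E_{J_0(M)}$, supplied by $\sigma_t^\rho\circ J_0=J_0\circ\sigma_t^\varphi$, and automatic in the tracial case.) Two bookkeeping points: first, this route produces $T^n(T^*)^n=\mathbb{E}\circ\mathbb{E}_n$, so to get the literal statement $(T^*)^nT^n=\mathbb{E}\circ\mathbb{E}_n$ you should run the construction on $T^*$ (also factorizable) rather than chase your first formula $J_n^{-1}\circ\mathbb{E}_{[n}\circ\mathbb{E}_{0]}\circ J_n$, whose $n$-dependent conjugation makes an $n$-independent $\mathbb{E}$ unattainable; second, $\mathbb{E}$ and $\mathbb{E}_n$ are conditional expectations on the dilation algebra $B$, not on $M$ --- only their composite restricted to $J_0(M)$ lands back in $J_0(M)$ --- which is how they are actually used later in the paper (Doob's inequality for the decreasing filtration $(B_{[n})$ in Theorem \ref{Thm:buem}).
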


\begin{rem}
For  a given a factorable map $T$ on  a noncommutative tracial measure space $(M,\tau)$, consider the factorozation as in Theorem \ref{Thm:FactoMap}
\begin{align*}
	(T^*)^n T^n=\mathbb{E}\circ \mathbb{E}_n,
\end{align*}
where $\mathbb{E}_n$ and $\mathbb{E}$ are normal faithful conditional expectations on $M$ and $(\mathbb{E}_n)$ is decreasing. Now  for $n \in \N$,  consider $ A_n = \mathbb{E}\circ \mathbb{E}_n $. Since both $\E$ and $ \E_n $ are $\tau$-preserving and positive contraction on $M$, so, by Proposition \ref{ext}, it extends to $L^\Phi(M, \tau)$, where, $\Phi$ is Orlicz function.  In particular $A_n$ extends to all $L^p(M, \tau)$ for $p \geq 1$.
\end{rem}

\begin{lem}\label{Lem:ContinuousA}
	The map $A_n=\E\circ\mathbb{E}_n : L^\Phi(M, \tau)\rightarrow L^0(M, \tau)$ is positive, additive and continuous.  
\end{lem}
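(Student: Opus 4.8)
The plan is to reduce all three assertions to the extension result of Proposition \ref{ext}, so that the only substantive point is the continuity into the measure topology. First I would record that both $\E$ and $\mathbb{E}_n$ are normal faithful conditional expectations on $M$; in particular each is a positive unital contraction that is $\tau$-preserving, so each satisfies hypotheses $(1)$--$(3)$ of Proposition \ref{ext}. Hence both extend to positive contractions on $L^\Phi(M,\tau)$, and therefore so does their composition $A_n=\E\circ\mathbb{E}_n$. As the extended $A_n$ is a bounded linear operator it is in particular additive, and it is positive; this disposes of the first two claims at once, and also yields the contraction bound $\norm{A_n(x)-A_n(y)}_\Phi\leq\norm{x-y}_\Phi$.

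It then remains to show that $A_n\colon(L^\Phi(M,\tau),\norm{\cdot}_\Phi)\ri(L^0(M,\tau),\text{measure topology})$ is continuous. Because of the contraction bound, it suffices to prove that the inclusion $(L^\Phi(M,\tau),\norm{\cdot}_\Phi)\hookrightarrow(L^0(M,\tau),\text{measure topology})$ is continuous, i.e.\ that $\norm{z}_\Phi\ri 0$ forces $\mu_t(z)\ri 0$ for each fixed $t>0$. The key tool I would use is the pointwise bound $\mu_t(z)\leq\norm{z}_\Phi\,\Phi^{-1}(1/t)$. To obtain it, take any $\eps>\norm{z}_\Phi$, so that $\int_0^\infty\Phi(\mu_s(z)/\eps)\,ds\leq 1$; since $s\mapsto\mu_s(z)$ is non-increasing and $\Phi$ is increasing,
$$ t\,\Phi\!\left(\frac{\mu_t(z)}{\eps}\right)\leq\int_0^t\Phi\!\left(\frac{\mu_s(z)}{\eps}\right)ds\leq 1, $$
whence $\mu_t(z)\leq\eps\,\Phi^{-1}(1/t)$; letting $\eps\downarrow\norm{z}_\Phi$ gives the estimate, where $\Phi^{-1}$ is well defined because $\Phi$ is a continuous increasing bijection of $[0,\infty)$.

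Combining the two steps: if $x_k\ri x$ in $L^\Phi(M,\tau)$, then $\norm{A_n(x_k)-A_n(x)}_\Phi\leq\norm{x_k-x}_\Phi\ri 0$, and by the embedding estimate applied to $z=A_n(x_k)-A_n(x)$ we get $\mu_t(A_n(x_k)-A_n(x))\ri 0$ for all $t>0$, i.e.\ $A_n(x_k)\ri A_n(x)$ in measure. I expect no serious obstacle here: positivity, additivity, and the $\norm{\cdot}_\Phi$-contraction bound are immediate from Proposition \ref{ext}, and the only genuine computation is the embedding estimate relating $\mu_t$ to the Orlicz norm, which is the mild technical heart of the argument.
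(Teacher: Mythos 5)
Your proposal is correct and follows essentially the same route as the paper: both reduce positivity, additivity and the $\norm{\cdot}_\Phi$-contraction property to Proposition \ref{ext} and then conclude via the continuity of the embedding $L^\Phi(M,\tau)\hookrightarrow L^0(M,\tau)$. The only difference is that the paper simply cites this embedding (\cite[Proposition 5.44]{GL20}), whereas you prove it directly through the estimate $\mu_t(z)\leq\norm{z}_\Phi\,\Phi^{-1}(1/t)$, which is a valid, self-contained substitute for that reference.
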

\begin{proof}
First note that  $\E$ and $\mathbb{E}_n$  are $\tau$-preserving,  additive,  positive and contraction on $M$. Thus, it extends to $L^\Phi(M, \tau)$ (see  the Proposition \ref{ext}) and the map $ A_n : L^\Phi(M, \tau) \ri L^\Phi(M, \tau) $ is also additive,  positive and contraction. So, the map $A_n :L^\Phi(M, \tau) \rightarrow L^0(M, \tau)$ is also positive and additive. Further, as 
$L^\Phi(M, \tau)$ is continuously injects into $L^0(M, \tau)$ $($see \cite[Proposition 5.44]{GL20}$)$, so, it follows that the map $A_n$ from $L^\Phi(M, \tau)$ to $L^0(M, \tau)$ is continuous, where $L^\Phi(M, \tau)$ is equipped with the topology induced by the norm $\norm{\cdot}_\Phi$ and $L^0(M, \tau)$ is equipped with measure topology.
\end{proof}

\begin{thm}\label{Thm:buem} Let $ p > 1$ and  $T$ be a factorable operator on  a noncommutative tracial measure space $(M, \tau)$. Suppose 
\begin{align*}
	(T^*)^n T^n=\mathbb{E}\circ \mathbb{E}_n,
\end{align*}
where $\mathbb{E}_n$ and $\mathbb{E}$ are faithful  normal conditional expectations on $M$ and $(\mathbb{E}_n)$ is decreasing. Now  for $n \in \N$,  the map  $ A_n: L^p(M, \tau) \ri L^0(M, \tau)$  is b.e.u.m on $L^p(M, \tau ) $.

\end{thm}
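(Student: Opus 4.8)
The plan is to derive the b.u.e.m.\ property from a noncommutative maximal inequality for $(A_n)$, following Anantharaman-Delaroche's $L^p$ argument. Since each $A_n=\E\circ\E_n$ is positive, additive and continuous (Lemma \ref{Lem:ContinuousA}), Lemma \ref{Lem:BUEMonPositive} lets me reduce everything to the positive cone: it suffices to establish b.u.e.m.\ at $0$ on $L^p(M,\tau)_+$.

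The first and main step is the maximal inequality $\norm{\sup_n^+ A_n(x)}_p\le C_p\norm{x}_p$ for $x\in L^p(M,\tau)_+$. Here $(\E_n)$ is a decreasing family of $\tau$-preserving conditional expectations, so for fixed $x$ the sequence $(\E_n(x))$ is a backward (reversed) noncommutative martingale. For each fixed $N$, reversing the index identifies $(\E_N(x),\ldots,\E_0(x))$ with a finite martingale adapted to an increasing filtration whose terminal element is $\E_0(x)$; Junge's noncommutative Doob maximal inequality \cite{Ju02} then yields $\norm{\sup_{0\le n\le N}^+\E_n(x)}_p\le C_p\norm{\E_0(x)}_p\le C_p\norm{x}_p$ with $C_p$ independent of $N$, and letting $N\ri\infty$ gives the inequality for the full sequence. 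Unwinding the definition of the vector-valued norm, I obtain $a\in L^p(M,\tau)_+$ with $\E_n(x)\le a$ for all $n$ and $\norm{a}_p\le 2C_p\norm{x}_p$; applying the positive $L^p$-contraction $\E$ gives $A_n(x)=\E(\E_n(x))\le\E(a)$, so setting $b=\E(a)$ we have $\norm{b}_p\le 2C_p\norm{x}_p$ and $b$ dominates the entire sequence $(A_n(x))$.

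It then remains to convert this domination into the b.u.e.m.\ estimate. Given $\eps,\delta>0$, I set $e=\chi_{[0,\delta]}(b)\in\CP(M)$. Monotonicity of the operator norm on positive elements gives $0\le eA_n(x)e\le ebe\le\delta e$, hence $\sup_n\norm{eA_n(x)e}\le\delta$, while Chebyshev's inequality bounds $\tau(e^\perp)=\tau(\chi_{(\delta,\infty)}(b))\le\delta^{-p}\norm{b}_p^p\le\delta^{-p}(2C_p)^p\norm{x}_p^p$. Taking $\gamma=\delta\,\eps^{1/p}/(2C_p)$ forces $\tau(e^\perp)<\eps$ whenever $\norm{x}_p<\gamma$, which is exactly the b.u.e.m.\ condition on $L^p(M,\tau)_+$; Lemma \ref{Lem:BUEMonPositive} upgrades it to all of $L^p(M,\tau)$.

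The step I expect to be most delicate is the maximal inequality for the reversed martingale: Junge's Doob inequality is formulated for increasing filtrations, so one must justify the reindexing of each finite truncation, check that the constant $C_p$ and the dominating element can be chosen uniformly in $N$, and control the monotone limit inside the vector-valued space $L^p(M,\ell^\infty)$. The extraction of the dominating element $a$ from $\norm{\sup_n^+\cdot}_p$ relies on the order characterization of positive sequences in $L^p(M,\ell^\infty)$ recalled in the preliminaries, and should be routine once the uniform bound is in place.
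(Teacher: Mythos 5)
Your proposal is correct, and its first half coincides with the paper's: both arguments rest on the strong type $(p,p)$ maximal inequality $\norm{{\sup_n}^+ A_n x}_p\le C_p\norm{x}_p$ for $x\in L^p(M,\tau)_+$, obtained from Junge's Doob inequality applied to the reversed martingale $(\E_n(x))$ exactly as in \cite[Lemma 4.3]{Ad06} (the paper simply cites that lemma rather than re-deriving the reindexing, but the content is the same, including the uniformity of the constant in the truncation level $N$). Where you genuinely diverge is in the passage from the maximal inequality to \emph{b.u.e.m.}: the paper first extracts from the dominating element only the qualitative statement that $(A_n)$ is \emph{p.b.u.b.} (for each $x$ there is a projection $e$ with $\tau(e^\perp)\le\eps$ and $\sup_n\norm{eA_n(x)e}<\infty$), and then invokes Theorem \ref{Thm:BoundedMeasure}, a Baire-category argument, to upgrade pointwise boundedness to uniform equicontinuity at $0$. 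You instead run a direct Chebyshev argument on the dominating element $b=\E(a)$: taking $e=\chi_{[0,\delta]}(b)$ gives $\sup_n\norm{eA_n(x)e}\le\delta$ and $\tau(e^\perp)\le\delta^{-p}\norm{b}_p^p$, and the explicit choice $\gamma=\delta\,\eps^{1/p}/(2C_p)$ closes the estimate; Lemma \ref{Lem:BUEMonPositive} then handles general $x$. Your route is shorter, fully quantitative (it produces an explicit modulus $\gamma(\eps,\delta)$), and avoids the Baire-category machinery entirely; the paper's route is softer but reuses Theorem \ref{Thm:BoundedMeasure}, which it needs elsewhere anyway. Both are valid proofs of the theorem.
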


\begin{proof}
We note that the map 
$A_n :L^p(M, \tau) \rightarrow L^0(M, \tau)$ are positive, continuous and additive, where $L^\Phi(M, \tau)$ is equipped with the topology induced by the norm $\norm{\cdot}_\Phi$ and $L^0(M, \tau)$ is equipped with measure topology.

Using Junge's Doob maximal inequality in the proof of  \cite[Lemma 4.3]{Ad06}, it is established that the sequence of maps   $(A_n)$ satisfies the  strong type $(p,p)$ with a constant $C_p=c/((p-1)^2)$, i.e.,
\begin{align*}
	\norm{{\sup_n}^+ A_n x}_p\leq \frac{c}{(p-1)^2}\norm{x}_p, \quad \text{ for all }x\in L^p(M)_+,
\end{align*}
where $c$ is a universal constant.  Then it follows that 
$(A_n)$ is p.b.e.b on $L^p(M, \tau)$. Indeed, suppose $ x \in L^p(M, \tau)_+$, then there exists 
$ a \in L^p(M, \tau)_+$ such that 
$$ A_n(x) \leq a \text{ and } \norm{ a } \leq C_p  \norm{ x }.$$
As $ a \in L^p(M, \tau)_+$, so,  for given $\eps > 0$, there exits $e \in \CP(M)$ such that $ \tau(1-e) \leq \eps $ and $ \norm{ eae } < \infty$. Consequently, we have 
\begin{enumerate}
	\item $\tau(1-e) \leq \eps $ and 
	\item $\underset{n \in \N}{ \sup}\norm{ e A_n(x)e } \leq \norm{ eae} < \infty $.
\end{enumerate}
For arbitrary $x \in L^p(M, \tau)$, we write $ x = x_1 -x_2 + i( x_3-x_4)$ where $ x_j  \in L^p(M, \tau)_+$ for $ j = 1, 2, \cdots, 4$. As $ x_j \in L^p(M, \tau)_+$, so for given $ \eps >0$, there exists $ e_j \in \CP(M)$ such that 
\begin{enumerate}
	\item $ \tau(1-e_j) \leq \frac{\eps}{4}$ for $ j = 1, 2, \cdots, 4$ and 
	\item $\norm{ e_j A_n(x_j)e } < \infty $.
\end{enumerate}
 Now suppose $ e_x = \displaystyle \bigwedge_{j = 1}^4 e_j $. Then observe that  $ \tau(1-e_x ) \leq \eps $ and  
\begin{align*}
	\norm{ e_x A_n(x)p_x} = \norm{ e_x A_n((x_1-x_2) +i ( x_3-x_4))e_x}\\
	\leq \sum_{j = 1}^4 \norm{ e_x A_n(x_j)e_x}
	\leq \sum_{j = 1}^4 \norm{ e_j A_n(x_j)e_j}  < \infty.
\end{align*}
Thus this proves that the sequence of map $(A_n)$ is \emph{p.b.u.b}.  
 Therefore, by Theorem \ref{Thm:BoundedMeasure},  it follows that $(A_n)$ is  \emph{b.e.u.m} on $L^p(M, \tau)$.

\end{proof}

\begin{defn}
Let $ p >0$, then a  function  $ \Phi: [0, \infty ) \ri [0, \infty )$  is called $p$-convex if
the function $ \Phi^{  \frac{1}{p}  } :   [0, \infty ) \ri [0, \infty )$ defined by $  \Phi^\frac{1}{p} (t) =  ( \Phi (t))^{ \frac{1}{p}} $  for $ t \in [0, \infty ) $ is a convex function. 
\end{defn}
\begin{exmp}
	\begin{enumerate}\addtolength{\itemsep}{0.15cm}
		\item Let $ p > 1$ and   consider the function $ \Phi: [0, \infty) \ri  [0, \infty) $ defined by $ \Phi(t) = t^p $ for $ t \in [0, \infty)$. Then $\Phi$  is $p$-convex function.

		\item Suppose $ p = \frac{10}{9} >1$. Consider the function $ \Phi: [0, \infty) \ri  [0, \infty) $ defined by $ \Phi(t) = t\log(1+t) $ for $ t \in [0, \infty)$. 
		Then note that the function $ \Phi^{ \frac{1}{p}}(t)= t^{0.9} ( \log(1+t))^{ 0.9}		$ for $t \in [0, \infty )$ is a convex function. Therefore,  $\Phi$  is a $p$-convex function.

		\item Consider the  function $ \Phi(t) = t $ for $ t \in [0, \infty )$. Then we note that $\Phi$ is not a $p$-convex function for any $p >1$. 
	\end{enumerate}
\end{exmp}

The following is one of the important lemma for proving the Rota theorem.
\begin{lem}\label{Lem:BUEMA} Let $\Phi$ is a $p$-convex Orlicz function for some $p > 1$. 
Then we have the following. 
\begin{enumerate}
	\item The sequence  $(A_n)$ is  \textit{b.u.e.m.} at $0$ on $L^\Phi(M, \tau)$. 
	\item In addition suppose the function $ \widetilde{ \Phi }(t) = \Phi( \sqrt{ t})$ for $ t \in [0, \infty )$, is also $ p$-convex, then the sequence $(A_n)$ is \textit{a.u.e.m.} at $0$.
\end{enumerate}	

\end{lem}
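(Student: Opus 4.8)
The plan is to bootstrap from the $L^p$-statement of Theorem~\ref{Thm:buem} to the Orlicz setting, using $p$-convexity to split a positive element of $L^\Phi(M,\tau)$ into an $L^\infty$-bounded part and a small $L^p$-tail. Since each $A_n$ is positive, Lemma~\ref{Lem:BUEMonPositive} lets me argue throughout on the positive cone $L^\Phi(M,\tau)_+$ and then transfer the conclusion to all of $L^\Phi(M,\tau)$.

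For (1), fix $\epsilon,\delta>0$ and take $x\in L^\Phi(M,\tau)_+$ with $\norm{x}_\Phi<\gamma\le 1$, where $\gamma$ is chosen below. Writing $(e_\lambda)$ for the spectral projections of $x$, I split $x=b+t$ with $b=\int_0^\infty\min(\lambda,s)\,de_\lambda$ and $t=(x-s)_+$, taking $s=\delta/2$. Since $0\le b\le s\cdot 1$ and each $A_n$ is unital and positive, $0\le A_n(b)\le s\,A_n(1)=s\cdot 1$, so $\sup_n\norm{A_n(b)}_\infty\le s$ and hence $\sup_n\norm{eA_n(b)e}_\infty\le\delta/2$ for \emph{every} projection $e$. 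For the tail, $0\le t\le x\chi_{(s,\infty)}(x)$ inside the commutative algebra generated by $x$, so $\norm{t}_p^p\le\tau\big(x^p\chi_{(s,\infty)}(x)\big)$. The crucial point is the $p$-convexity estimate: convexity of $\Phi^{1/p}$ together with $\Phi^{1/p}(0)=0$ makes $\lambda\mapsto\Phi(\lambda)^{1/p}/\lambda$ non-decreasing, whence $\lambda^p\le(s^p/\Phi(s))\,\Phi(\lambda)$ for $\lambda\ge s$; integrating and applying Lemma~\ref{leq} gives $\norm{t}_p^p\le(s^p/\Phi(s))\,\tau(\Phi(x))\le(s^p/\Phi(s))\,\norm{x}_\Phi$. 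Thus $\norm{t}_p\to0$ as $\norm{x}_\Phi\to0$ for $s$ fixed. Feeding this into the $L^p$-\emph{b.u.e.m.} of Theorem~\ref{Thm:buem} (applied with tolerances $\epsilon,\delta/2$, yielding some $\gamma'$) and choosing $\gamma$ so small that $\norm{x}_\Phi<\gamma$ forces $\norm{t}_p<\gamma'$, I obtain a projection $e$ with $\tau(e^\perp)<\epsilon$ and $\sup_n\norm{eA_n(t)e}_\infty\le\delta/2$. Summing the two contributions yields $\sup_n\norm{eA_n(x)e}_\infty\le\delta$, which is \emph{b.u.e.m.} on $L^\Phi(M,\tau)_+$.

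For (2) I pass from two-sided to one-sided control by squaring. For self-adjoint $x$ the $C^*$-identity gives $\norm{A_n(x)e}_\infty^2=\norm{eA_n(x)^2e}_\infty$, and since $A_n=\mathbb{E}\circ\mathbb{E}_n$ is a composition of unital completely positive conditional expectations, the Kadison--Schwarz inequality gives $A_n(x)^2\le A_n(x^2)$, whence $\norm{A_n(x)e}_\infty^2\le\norm{eA_n(x^2)e}_\infty$. Next, $x\mapsto x^2$ maps $L^\Phi(M,\tau)$ into $L^{\widetilde\Phi}(M,\tau)$ with $\norm{x^2}_{\widetilde\Phi}=\norm{x}_\Phi^2$, because $\widetilde\Phi(\mu_t(x)^2/\lambda)=\Phi(\mu_t(x)/\sqrt\lambda)$. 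As $p$-convexity with $p>1$ forces ordinary convexity, $\widetilde\Phi$ is a genuine Orlicz function, so part (1) applies to it and shows $(A_n)$ is \emph{b.u.e.m.} at $0$ on $L^{\widetilde\Phi}(M,\tau)$. Given $\epsilon,\delta>0$, using this \emph{b.u.e.m.} with tolerances $\epsilon,\delta^2$ produces $\gamma''$ and, for $\norm{x}_\Phi<\sqrt{\gamma''}$ (equivalently $\norm{x^2}_{\widetilde\Phi}<\gamma''$), a projection $e$ with $\tau(e^\perp)<\epsilon$ and $\sup_n\norm{eA_n(x^2)e}_\infty\le\delta^2$. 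Taking square roots gives $\sup_n\norm{A_n(x)e}_\infty\le\delta$, i.e. \emph{u.e.m.} at $0$ (the property called \emph{a.u.e.m.} in the statement) on $L^\Phi(M,\tau)_+$, which Lemma~\ref{Lem:BUEMonPositive} upgrades to all of $L^\Phi(M,\tau)$.

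The routine ingredients are the spectral splitting and the monotone-ratio consequence of convexity. I expect the main obstacle to be a careful justification of the operator Kadison--Schwarz inequality $A_n(x)^2\le A_n(x^2)$ for \emph{unbounded} $\tau$-measurable $x\in L^\Phi$ rather than merely $x\in M$; I would handle it by truncating, applying the inequality to $x\wedge k\in M_+$ where $\mathbb{E}$ and $\mathbb{E}_n$ are genuinely Schwarz maps, and passing to the limit using normality of the conditional expectations and their contractivity on the Orlicz spaces from Proposition~\ref{ext}.
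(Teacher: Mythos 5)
Your proposal is correct and follows essentially the same route as the paper: part (1) rests on the same $p$-convexity bound $\lambda\le C\,\Phi(\lambda)^{1/p}$ for $\lambda\ge\delta/2$ to split a positive $x$ into a $\norm{\cdot}_\infty$-small truncation plus a tail whose $L^p$-norm is controlled by $\norm{x}_\Phi^{1/p}$ via Lemma \ref{leq}, after which Theorem \ref{Thm:buem} applies, and part (2) is the identical squaring argument using $\norm{x^2}_{\widetilde\Phi}=\norm{x}_\Phi^2$ and Kadison--Schwarz. The only (harmless) organizational difference is that you run the estimate directly on all of $L^\Phi(M,\tau)_+$, whereas the paper first establishes \emph{b.u.e.m.} on $M\cap L^1(M,\tau)_+$ and then invokes Theorems \ref{Thm:BUEMonClosure} and \ref{Thm:BoundedMeasure} to pass to the closure.
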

\begin{proof}
	As $ \Phi$ is p-convex Orlicz function, so the function $  \Phi^\frac{1}{p}(\lambda)  = { \Phi(  \lambda ) }^\frac{1}{p} $ for $ \lambda \in [ 0, \infty )$ is a Orlicz function. Thus, for given 
	$\delta >0$, there exists a $t > 0 $ such that 
	$$ t \cdot \Phi^\frac{1}{p}(\lambda) \geq \lambda, \text{ whenever } \lambda \geq \frac{\delta}{2}.$$
	i.e, we have $ t \cdot \Phi^\frac{1}{p}(\lambda)  \geq \lambda, \text{ whenever } \lambda \geq \frac{\delta}{2}.$
	Now suppose $ x \in M \cap L^1(M, \tau)_+$, then  we have the following 	
	\begin{align}\label{less}
	\nonumber 	x &=  \int_0^\infty  \lambda d e_\lambda  =  \int_0^{ \frac{\delta}{2} }  \lambda d e_\lambda  +  \int_{ \frac{\delta}{2} }^\infty  \lambda d e_\lambda \\
	\nonumber		&\leq \int_0^{ \frac{\delta}{2} }  \lambda d e_\lambda  +   t \cdot \int_{ \frac{\delta}{2} }^\infty   \Phi^\frac{1}{p}(\lambda) d e_\lambda \\
		&= x_\delta + t \cdot  \Phi^\frac{1}{p}(x), 
	\end{align}
where $ x_\delta =  \int_0^{ \frac{\delta}{2} }  \lambda d e_\lambda $.  	Suppose $ y =   \Phi^\frac{1}{p}(x)$, then we observe that 
	\begin{align*}
		\tau( y^p)&= \tau( \Phi( x) ) \leq \norm{ x}_\phi.
	\end{align*}	
	Thus, we have $ \norm{ y}_p  \leq  ( \tau( \Phi( x) ))^\frac{1}{p} \leq 	\norm{ x}_\phi^{\frac{1}{p}} $, i.e, $ \norm{  \Phi^\frac{1}{p}(x)}_p \leq \norm{ x}_\phi^{\frac{1}{p}} .$\\
	
	We know that the map $ A_n : L^p(M, \tau)_+ \rightarrow L^0(M, \tau)$ is \emph{b.u.e.m} at $0$ (see Theorem \ref{Thm:buem}), so, for given $\delta > 0$ and $ \eps >0$, there exists a $\gamma \in (0, 1) $ such that  for all  $ x \in L^p(M, \tau)$ with $ \norm{x}_p \leq \gamma $, there exists a projection $ e \in \CP(M) $ such that 
	\begin{enumerate}\addtolength{\itemsep}{0.15cm}
		\item $\tau(1-e) \leq \eps $ and 
		\item $ \underset{ { n \in \N}  }{\sup}  \norm{eA_n(x)e} \leq \frac{\delta}{2t}$
	\end{enumerate}
	Now consider  $ \gamma_1 = \gamma^p $ and  let $ x \in M \cap L^1(M, \tau)_+ $ such that $ \norm{ x}_\Phi \leq \gamma_1$. Then by first part,  observe that 
	\begin{enumerate}\addtolength{\itemsep}{0.15cm}
		\item  $ \norm{ A_n(x_\delta ) } \leq \frac{\delta}{2} $ for all $ n \in \N$ and 
		\item $ \norm{ \Phi^\frac{1}{p}(  x )}_p \leq	\norm{ x}_\Phi^{\frac{1}{p}} \leq \gamma_1^{  \frac{1}{p}}= \gamma$.  
	\end{enumerate}
	Hence, there exists a projection $ e \in \CP(M) $ such that 
	$$  \underset{ { n \in \N}  }{\sup}  \norm{ e A_n( \Phi^\frac{1}{p}(  x )) e} \leq \frac{\delta}{2t}.$$
	Subsequently, by Eq. \ref{less}, it follows that 
	\begin{align*}
		\underset{ { n \in \N}  }{\sup}  \norm{ e A_n( x)e }  &\leq \underset{ { n \in \N}  }{\sup}  \norm{ e A_n( x_\delta )e }   + t \cdot \underset{ { n \in \N}  }{\sup}  \norm{ e A_n(\Phi^\frac{1}{p}(  x ) )e }  \\
		& \leq \frac{\delta}{2 } + t \cdot \frac{\delta}{ 2 t} = \delta.
	\end{align*}
Thus, $(A_n)$ is \emph{b.u.e.m} at $0$ in $M \cap L^1(M, \tau)$. Consequently, with the help of Theorem \ref{Thm:BUEMonClosure} and Theorem \ref{Thm:BoundedMeasure} , we conclude that $(A_n)$ is \emph{b.u.e.m} at $0$ in $L^\Phi(M, \tau)$.

	For the proof of $(2)$, fix $ \eps > 0 $ and $ \delta > 0$. Since $ \widetilde{ \Phi }$ is $p$-convex so by part $(1)$,  we note that the sequence is $(A_n)$ is  \textit{b.u.e.m.} at $0$ on $L^{\widetilde{ \Phi } }(M, \tau)$. Hence, there exists a $ \gamma > 0$ such that for given $ x \in L^{\widetilde{ \Phi } }(M, \tau)$ with $ \norm{ x }_{ \widetilde{ \Phi }   } \leq \gamma $, there exists a projection $ e\in \CP(M)$ such that 
\begin{enumerate} \addtolength{\itemsep}{0.15cm}
		\item $\tau(1-e) \leq \eps $ and 
		\item $ \underset{ { n \in \N}  }{\sup}  \norm{eA_n(x)e} \leq \delta^2.$ 
	\end{enumerate}
Now  suppose $ x \in L^\Phi(M, \tau)_+ $ with $ \norm{ x } \leq \sqrt{\gamma } $,   then we note that 
$ x^2 \in  L^{\widetilde{ \Phi } }(M, \tau)$ and observe that 

\begin{align*}
\norm{x^2 }_{  \widetilde{ \Phi } } &=\text{inf}\{ \lambda > 0 : \widetilde{ \Phi }\left( \frac{x^2}{\lambda} \right)    \leq 1    \}\\
&=\text{inf}\{ \lambda > 0 : \Phi\left( \frac{x}{\sqrt{\lambda}}\right)    \leq 1    \}\\
&=\text{inf}\{ \lambda^2 > 0 : \Phi\left( \frac{x}{\lambda}\right)    \leq 1    \}\\
&=\norm{ x }^2_\Phi  \leq \gamma.
\end{align*}
Hence, there exists a projection such that 
\begin{enumerate} \addtolength{\itemsep}{0.15cm}
	\item $\tau(1-e) \leq \eps $ and 
	\item $ \underset{ { n \in \N}  }{\sup}  \norm{eA_n(x^2)e} \leq \delta^2.$ 
\end{enumerate}
Then, we use the Kadison's inequality and obtain the following. 
\begin{align*}
\left(   \underset{ { n \in \N}  }{\sup}  \norm{A_n(x)e}  \right)^2 &=  \underset{ { n \in \N}  }{\sup}  \norm{A_n(x)e}^2 \\
&= \underset{ { n \in \N}  }{\sup}  \norm{e A_n(x) A_n(x)e}  \\
&\leq  \underset{ { n \in \N}  }{\sup}  \norm{e A_n(x^2)e} \leq \delta^2. 
\end{align*}
Thus, we obtain that the sequence $(A_n)$ is \textit{u.e.m.} at $0$ on $L^\Phi(M, \tau)_+$ and consequently, it is \textit{u.e.m.} at $0$ on $L^\Phi(M, \tau)$.

\end{proof}

\begin{thm}[\textbf{Noncommutative Rota's theorem}]\label{Thm:NCRotaOrlicz}
	Let $(M, \tau)$ be a noncommutative tracial measure space  and  $\Phi$ be an Orlicz function.  Suppose   $T:(M,\tau)\rightarrow (M,\tau)$ is  a factorizable $\tau$-preserving Markov operator and   $\Phi$ is  $p$-convex for some $p >1$, then  for any $x\in L^\Phi(M, \tau)$, the sequence  $\big(T^n (T^*)^n( x) \big)$ converges in \emph{b.a.u.}\\
	Furthermore, if the function $ \widetilde{ \Phi }(t) = \Phi( \sqrt{ t})$ for $ t \in [0, \infty )$, is  also  $ p$-convex, then for all $x\in L^\Phi(M, \tau)$, the sequence  $\big(T^n (T^*)^n( x) \big)$ converges in \emph{a.u.}
\end{thm}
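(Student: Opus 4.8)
The plan is to deduce the convergence of $T^n(T^*)^n(x)$ from the already-established equicontinuity of the auxiliary sequence $A_n = \mathbb{E}\circ\mathbb{E}_n$ together with a noncommutative Banach principle. The key algebraic input is Theorem \ref{Thm:FactoMap}, which gives the factorization $(T^*)^nT^n = \mathbb{E}\circ\mathbb{E}_n = A_n$. However, the theorem concerns $T^n(T^*)^n$, \emph{not} $(T^*)^nT^n$. First I would observe that $T$ being a factorable $\tau$-preserving Markov operator, its adjoint $T^*$ is again a factorable $\tau$-preserving Markov operator (since in the tracial case $T^*$ is the trace-adjoint and factorability is symmetric under taking adjoints). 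Applying Theorem \ref{Thm:FactoMap} to $S := T^*$ in place of $T$ yields $T^n(T^*)^n = S^{*n}S^n = \mathbb{E}'\circ\mathbb{E}'_n$ for suitable faithful normal conditional expectations with $(\mathbb{E}'_n)$ decreasing. Thus the sequence $B_n := T^n(T^*)^n$ has exactly the same structural form as the $A_n$ studied in Lemma \ref{Lem:BUEMA}, so all the equicontinuity conclusions proved there apply verbatim to $(B_n)$.

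Next I would invoke Lemma \ref{Lem:BUEMA}: since $\Phi$ is $p$-convex, the sequence $(B_n)$ is \emph{b.u.e.m.} at $0$ on $L^\Phi(M,\tau)$, and under the additional hypothesis that $\widetilde{\Phi}(t)=\Phi(\sqrt t)$ is $p$-convex, it is even \emph{u.e.m.} at $0$. By the noncommutative Banach principle (Theorem \ref{Thm:BanachPrincipleAu}), the set
\begin{align*}
C=\{x\in L^\Phi(M,\tau): (B_n(x)) \text{ converges in b.a.u (resp. a.u)}\}
\end{align*}
is closed in $L^\Phi(M,\tau)$. Hence it suffices to verify convergence on a dense subset, and by Lemma \ref{dense} the set $M\cap L^1(M,\tau)$ is dense in $L^\Phi(M,\tau)$. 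So the problem reduces to establishing convergence of $B_n(x)=T^n(T^*)^n(x)$ for $x$ in this dense core.

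For the dense set I would argue convergence in a strong-enough topology to conclude b.a.u (resp. a.u) on that core. The natural route is to show $L^2$-convergence of $(B_n(x))$ for $x\in M\cap L^1(M,\tau)$: writing $B_n=\mathbb{E}'\circ\mathbb{E}'_n$ with $(\mathbb{E}'_n)$ a decreasing sequence of conditional expectations, the martingale convergence theorem for noncommutative decreasing martingales gives that $\mathbb{E}'_n(x)$ converges in $L^2$ to the conditional expectation onto the intersection $\bigcap_n B'_{[n}$, and applying the continuous map $\mathbb{E}'$ preserves this convergence. Since $L^2$-convergence combined with the uniform equicontinuity already in hand upgrades to the desired almost-uniform mode on the dense core, the closedness of $C$ then propagates it to all of $L^\Phi(M,\tau)$.

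The main obstacle I anticipate is the very first step: justifying that $T^n(T^*)^n$ genuinely admits the decreasing-conditional-expectation factorization of Theorem \ref{Thm:FactoMap}. One must confirm that $T^*$ inherits factorability from $T$ in the tracial setting — this uses that the Kolmogorov/Markov chain construction of Theorem \ref{Thm:NCMarkovConst} is symmetric with respect to the reversed chain, so that the roles of $J_0$ and $\beta$ can be interchanged to realize $T^n(T^*)^n$ as $\mathbb{E}'\circ\mathbb{E}'_n$ with a genuinely \emph{decreasing} family $(\mathbb{E}'_n)$. Once this structural identity is secured, everything else is an application of the machinery already developed: Lemma \ref{Lem:BUEMA} supplies equicontinuity, Theorem \ref{Thm:BanachPrincipleAu} supplies closedness, and Lemma \ref{dense} supplies density, so no new maximal inequality is needed.
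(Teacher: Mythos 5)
Your proposal is correct and follows essentially the same route as the paper's own proof: the factorization $T^n(T^*)^n=\mathbb{E}\circ\mathbb{E}_n$ from Theorem \ref{Thm:FactoMap}, the equicontinuity of $(A_n)$ from Lemma \ref{Lem:BUEMA}, and the noncommutative Banach principle of Theorem \ref{Thm:BanachPrincipleAu}. You are in fact somewhat more careful than the paper on two points it leaves implicit --- the passage between $(T^*)^nT^n$ and $T^n(T^*)^n$ via factorability of $T^*$, and the verification of convergence on the dense core $M\cap L^1(M,\tau)$ by decreasing martingale convergence, which is genuinely needed since the Banach principle only yields closedness of the convergence set.
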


\begin{proof}
	Firstly, note that by Theorem \ref{Thm:FactoMap}, one has
	\begin{align*}
		(T^*)^n T^n (x)=\mathbb{E}\circ \mathbb{E}_{n} (x), \quad x\in M,
	\end{align*}
	where $\mathbb{E}_n$ and $\mathbb{E}$ are normal faithful conditional expectations on $M$ and $(\mathbb{E}_n)$ is decreasing.	Note that the above formula can be extended to $L^\Phi(M, \tau)$ by using the Proposition \ref{ext}. Write  $A_n:=(T^*)^n T^n=\mathbb{E}\circ \mathbb{E}_{n}$. Then by Lemma \ref{Lem:BUEMA}, it follows that the sequence of maps $(A_n)$ is  \emph{b.u.e.m}  on $L^\Phi(M, \tau)$. Therefore, by Theorem \ref{Thm:BanachPrincipleAu}, it follows that for any $x\in L^\Phi(M)$, the sequence $(A_n(x))$   converges in \textit{b.a.u.}. Equivalently, it proves that for any $x\in L^\Phi(M)$,  the sequence $( T^n (T^*)^n (x))$ converges in \textit{b.a.u.}

By using  similar argument as given in part $(2)$ of  Lemma \ref{Lem:BUEMA},   we obtain that the sequence $(A_n(x)) $  converges in \textit{a.u} for all $ x \in L^\Phi(M, \tau)$. This completes the proof.
	
\end{proof}

\section{Non commutative  Ergodic theorems for  free group and semigroup actions}\label{Sec:MainResults}
In this section, our aim is to study noncommutative maximal inequalities and ergodic theorems associated to action by free group or semigroups on noncommutative spaces. Actually, we study it  for more  general context.  As a particular case the  noncommutative maximal inequalities and ergodic theorems associated to action by free group or semigroups on noncommutative spaces can be obtained. First we set the preliminary background.
We begin with the following notations. We write 
\begin{enumerate}
	\item $[m]: = \{ 1, 2, \cdots, m \} $
	
	\item $ [-m..m]=  \{-m, -(m-1), \cdots, -1, 1, 2, \cdots, m \} $
	
\end{enumerate}
Suppose $I \in  \{ [m], [-m..m] \}$, i.e,  $I$ is either $[m]$ or $[-m..m]$. Then we consider the following: 
\begin{enumerate}
\item $I^\N : = \{ \omega= ( \omega_1, \omega_2, \cdots ): ~ \omega_j \in I \text{ for all } j \in \N \}$ set of all one sided sequence in the  symbols  of elements form $I$ and 
	\item $I^F : = \{ ( \omega_1, \omega_2, \cdots,\omega_n ): ~ n \in \N, \omega_j \in I \text{ and  for all } 1 \leq j\leq n  \}$ the set of all finite tuple of   symbols  of elements form $I$.
	For simplicity of writing  we also identify 
	$I^F : = \{  \omega_1 \omega_2 \cdots \omega_n : ~ n \in \N, \omega_j \in I \text{ and  for all } 1 \leq j\leq n  \}$, i.e, it is the collection of finite words in $I$.
\end{enumerate}
We consider the shift map $ \sigma: I^\N \ri I^\N$ which is defined as 
$$ \sigma(\omega)(n ) = \omega_{ n+1}, \text{ for } \omega \in I^\N.$$\\
Suppose $P=(p_{ij})_{i,j\in I}$  is a stochastic matrix whose rows and columns are indexed by the  finite set $I$. Suppose $( p_i)_{ i \in I}$ is a  stationary distribution with transition matrix $P$, i.e, it is a probability measure on $I$ such that 
$$ p_j=\underset{ i \in I}{\sum} p_i p_{ij}.$$  
Assume that  $p_i>0$ for all $i\in I$. 
Let $\mu$ be a Borel $\sigma$-invariant Markov  measure on $I^\N$ with initial distribution $(p_1,\ldots,p_m)$ and transition 
probability matrix $P=(p_{ij})_{i,j\in I}$. 
For $\omega =( \omega_1, \omega, \cdots, \omega_n) \in  I^F$, we denote 
$$C(\omega) = \{ (\omega_1, \omega, \cdots, \omega_n, v_1, v_2, \cdots ):   ( v_1, v_2, \cdots )\in I^\N \},
$$
i.e, $C(\omega)$ is the set of  sequences  words in $I$ starting with $\omega$. We further write 
$$\mu(\omega):=\mu(C(\omega)).$$
Suppose $ \omega = i_1 i_2 \cdots i_n \in I^F $, then we note that 
\begin{align}\label{stocha}
\mu( \omega) = \mu(i_1i_2 \cdots i_n) = \mu(i_1)p_{i_1 i_2}p_{i_2 i_3}\cdots p_{i_{n-1} i_n}.
\end{align}
Now suppose $ X$ is Banach space and for each  $ i \in I$,  let $  \alpha_i  : X \ri X$ be a linear operator. Then for $\omega =( \omega_1, \omega, \cdots, \omega_n) \in  I^F$, we write 
$$ \alpha_{\omega }= \alpha_{ \omega_n} \alpha_{ \omega_{n-1}}   \cdots \alpha_{ \omega_1}.$$
Let  $ \abs{\cdot}: I^F \ri I^F$ be a function, defined by 
$$\abs{\omega}= n \text{ for }  \omega =( \omega_1, \omega, \cdots, \omega_n) \in  I^F$$
i.e, it counts the number of tuple.
Further, for simplicity of notation for  $j \in I$,  and $ \omega=(\omega_1, \omega_2, \cdots, \omega_n) \in I^F$, we write $\omega j$  for the element $  (\omega_1, \omega_2, \cdots, \omega_n, j )$, i.e, 
$$ \omega j = (\omega_1, \omega_2, \cdots, \omega_n, j ).$$
Now consider the following spherical average
\begin{align*}
	S_n=\sum_{\abs{\omega}=n}\mu(\omega)\alpha_\omega,
\end{align*}
and the averages of $S_k$ over $k=0,1\ldots,n-1$, by
\begin{align}\label{Eq:AverageOfAverage}
	A_n=\frac{1}{n}\sum_{k=0}^{n-1}S_k.
\end{align}

For each $ i \in I$, let us further consider the following operators:
\begin{align}\label{Eq:AverageOfIthAverage}
	\nonumber S_n^{(i)}&=\sum_{\{\omega:\abs{\omega}=n,~ \omega_n=i\}}\mu(\omega)\alpha_\omega \text{ and } \\
	A_n^{(i)}&=\frac{1}{n}\sum_{k=0}^{n-1}S_k^{(i)},
\end{align}
with  the convention that  $\alpha_\omega=1$, the identity operator on $X$, whenever $\abs{\omega}=0$. Now readily  note that  
\begin{align}\label{Eq:SumOfCni}
	A_n=\sum_{i\in I} A_n^{(i)}.
\end{align}
Our aim is to study the convergence of $S_n$ and $A_n$ in noncommutative  $L^p$-spaces and Orlicz spaces. \\
The following lemma is important to us. 
\begin{lem}\label{Lem:SnPlusOneEquation}
	For each $j\in I$ and for any positive integer $n$, we have the following equality:
\begin{enumerate}
	\item $S_{n+1}^{(j)}=\displaystyle\sum_{i\in I}p_{ij}\alpha_j S_n^{(i)} \text{ and }$\\
		
\item  $\frac{S_{n+1}^{(j)}}{p_j}=\displaystyle\sum_{i\in I} \frac{p_i p_{ij}}{p_j}\alpha_j \left(\frac{S_{n}^{(j)}}{p_i}\right)$.	
\end{enumerate}
\end{lem}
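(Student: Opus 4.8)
The plan is to prove both identities by a direct combinatorial decomposition of the words of length $n+1$ ending in the symbol $j$, exploiting the two factorization properties already built into the definitions: the Markov factorization of the measure $\mu$ recorded in Eq.~\eqref{stocha}, and the way $\alpha_\omega$ composes when one symbol is appended to a word. This is bookkeeping rather than analysis, so the emphasis will be on keeping the conventions consistent.

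First I would prove $(1)$. Fix $j\in I$ and $n\in\N$. Every word $\omega$ with $\abs{\omega}=n+1$ and $\omega_{n+1}=j$ is of the form $\omega=\omega' j$ for a unique word $\omega'$ with $\abs{\omega'}=n$, and $\omega'_n=i$ for exactly one $i\in I$; thus the summation index set splits as a disjoint union over $i\in I$. Two elementary observations drive the computation. On the measure side, since $\omega'_n=i$, the recursive structure in Eq.~\eqref{stocha} gives $\mu(\omega' j)=\mu(\omega')\,p_{ij}$. On the operator side, because $\alpha_\omega=\alpha_{\omega_n}\cdots\alpha_{\omega_1}$ composes in reverse order, appending $j$ at the \emph{end} of $\omega'$ prepends $\alpha_j$ on the \emph{left}, i.e. $\alpha_{\omega' j}=\alpha_j\,\alpha_{\omega'}$. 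Substituting both into the definition of $S_{n+1}^{(j)}$ and factoring $p_{ij}$ and $\alpha_j$ out of each inner sum yields
$$S_{n+1}^{(j)}=\sum_{i\in I}\sum_{\{\omega':\abs{\omega'}=n,\ \omega'_n=i\}}\mu(\omega')\,p_{ij}\,\alpha_j\,\alpha_{\omega'}=\sum_{i\in I}p_{ij}\,\alpha_j\,S_n^{(i)},$$
which is exactly $(1)$.

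Part $(2)$ then follows by a purely algebraic rearrangement of $(1)$. Dividing through by $p_j$ and inserting $p_{ij}=p_i\cdot(p_{ij}/p_i)$ — legitimate because $p_i>0$ for all $i$ by hypothesis — and using the linearity of $\alpha_j$ to pull the scalar $1/p_i$ inside, I obtain
$$\frac{S_{n+1}^{(j)}}{p_j}=\frac{1}{p_j}\sum_{i\in I}p_{ij}\,\alpha_j\,S_n^{(i)}=\sum_{i\in I}\frac{p_i\,p_{ij}}{p_j}\,\alpha_j\!\left(\frac{S_n^{(i)}}{p_i}\right),$$
which is the asserted identity (here the summand on the right should read $S_n^{(i)}$, matching the running index $i$).

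I do not expect a serious obstacle; the only point demanding care is matching the two conventions exactly, namely that appending a symbol at the end of a word corresponds to composing $\alpha_j$ on the left, and that the newly created transition contributes the factor $p_{ij}$ (and not $p_{ji}$) to $\mu$. Reversing either of these would still produce a formula of the same outward shape but with transposed indices, so the crux is simply to track the intermediate symbol $i=\omega'_n$ consistently through the decomposition.
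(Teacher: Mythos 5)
Your proof is correct and follows essentially the same route as the paper's: decompose the words of length $n+1$ ending in $j$ as $\omega' j$ with $\abs{\omega'}=n$, split by the last symbol $i$ of $\omega'$, and use $\mu(\omega' j)=\mu(\omega')p_{ij}$ together with $\alpha_{\omega' j}=\alpha_j\alpha_{\omega'}$; part $(2)$ is the same scalar rearrangement. Your observation that the displayed statement of $(2)$ should read $S_n^{(i)}$ rather than $S_n^{(j)}$ inside the parentheses is also correct.
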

\begin{proof}
For the assertion $(1)$, 	note that 
	\begin{align*}
		S_{n+1}^{(j)}&=\sum_{\{\omega:~\abs{\omega}=n+1, \omega_{n+1}=j\}}\mu(\omega)\alpha_\omega\\
		&=\sum_{\{\omega:~\abs{\omega}=n\}}\mu(\omega j)\alpha_j\alpha_\omega\\
		&=\alpha_j\sum_{i\in I} \sum_{\{\omega:~\abs{\omega}=n, ~\omega_n=i\}}\mu(\omega j)\alpha_\omega\\
		&=\alpha_j\sum_{i\in I}\sum_{\{\omega:~\abs{\omega }=n-1\} }\mu(\omega i j)\alpha_{\omega i}\\
		&=\sum_{i\in I} p_{ij}\alpha_j S_{n}^{(i)}, ~(\text{as } \frac{\mu(\omega i j )}{ \mu(\omega i)} = p_{ij}, \text{   see Eq. } \ref{stocha}).
	\end{align*}
This proves $(1)$.  The second equality follows easily from the first equation. This completes the proof.
\end{proof}

Following \cite{Bu00}, we consider a new operator 
\begin{align}\label{Eq:DirectSumOperator}
T: \underset{i\in I}{\oplus} X &\rightarrow \underset{i\in I}{\oplus} X \text{ by }\\
\nonumber T(x_1,\ldots, x_m)&=(y_1\ldots,y_m), \quad\text{ where } y_j=\sum_{i\in I}\frac{p_i p_{ij}}{p_j} \alpha_j (x_i).
\end{align}
Then we have the following relations.
\begin{lem}\label{Lem:DiagonalOperatorTn}
	Given any $x\in X$ and $n\in\mathbb{N}$, we have
	\begin{enumerate}
		\item
		$T^n(x,\ldots, x)=\left(\frac{S_n^{(j)}(x)}{p_j}\right)_{ j \in I}
	$
	and
\item 	$
		\displaystyle\frac{1}{n}\sum_{k=0}^{n-1}T^n(x,\ldots,x)=\left(\frac{A_n^{(j)}(x)}{p_j}\right)_{ j \in I}.$
\end{enumerate}
\end{lem}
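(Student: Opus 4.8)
The plan is to establish (1) by induction on $n$ using the recursion in Lemma \ref{Lem:SnPlusOneEquation}(2), and then to obtain (2) by a one-line averaging argument. To streamline the bookkeeping I would introduce the shorthand $z_j^{(n)}:=S_n^{(j)}(x)/p_j$ for $j\in I$, so that assertion (1) is exactly the statement that $T^n(x,\ldots,x)=(z_j^{(n)})_{j\in I}$, where $T$ is the operator from \eqref{Eq:DirectSumOperator}.

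For the base case $n=1$, I would compute directly from the definition in \eqref{Eq:AverageOfIthAverage} that $S_1^{(j)}=\mu(j)\alpha_j=p_j\alpha_j$, whence $z_j^{(1)}=\alpha_j(x)$. On the other hand, evaluating $T$ at the diagonal vector $(x,\ldots,x)$ produces the $j$-th coordinate $\sum_{i\in I}\frac{p_i p_{ij}}{p_j}\alpha_j(x)$, and the stationarity identity $\sum_{i\in I}p_i p_{ij}=p_j$ collapses this to $\alpha_j(x)$. Hence $T(x,\ldots,x)=(z_j^{(1)})_{j\in I}$, which is (1) for $n=1$. (If one includes $n=0$, the consistent convention $S_0^{(j)}=p_j\,\mathrm{id}$ makes (1) read $T^0(x,\ldots,x)=(x,\ldots,x)$, which holds trivially.)

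For the inductive step, assume $T^n(x,\ldots,x)=(z_i^{(n)})_{i\in I}$. The key observation is that Lemma \ref{Lem:SnPlusOneEquation}(2), rewritten in the present notation, reads $z_j^{(n+1)}=\sum_{i\in I}\frac{p_i p_{ij}}{p_j}\alpha_j(z_i^{(n)})$, which is precisely the $j$-th coordinate of $T$ applied to the vector $(z_i^{(n)})_{i\in I}$. Therefore $T^{n+1}(x,\ldots,x)=T\bigl(T^n(x,\ldots,x)\bigr)=T\bigl((z_i^{(n)})_{i\in I}\bigr)=(z_j^{(n+1)})_{j\in I}$, completing the induction and hence (1).

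Finally, (2) follows by linearity: summing the identity in (1) over $k=0,\ldots,n-1$ and dividing by $n$, then pulling the scalar $1/p_j$ out of each coordinate, yields the $j$-th coordinate $\frac{1}{p_j}\cdot\frac{1}{n}\sum_{k=0}^{n-1}S_k^{(j)}(x)$, which equals $A_n^{(j)}(x)/p_j$ by the definition of $A_n^{(j)}$ in \eqref{Eq:AverageOfIthAverage}. The argument is entirely formal; the only point requiring mild care is the precise alignment of the recursion in Lemma \ref{Lem:SnPlusOneEquation} with the coordinate formula defining $T$, together with the use of stationarity in the base case. There is no genuine analytic obstacle here.
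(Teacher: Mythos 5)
Your proof is correct and follows essentially the same route as the paper: induction on $n$ with the base case handled via the stationarity identity $\sum_{i\in I}p_ip_{ij}=p_j$, the inductive step via the recursion of Lemma \ref{Lem:SnPlusOneEquation}, and part (2) by linearity (the paper simply declares this last step ``straightforward''). Your write-up is, if anything, slightly more careful in making the use of stationarity in the base case explicit.
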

\begin{proof}
We use induction to prove our assertion. For $(1)$, we note that 
\begin{align*}
T(x, \cdots, x) &= \left( \sum_{i\in I}\frac{p_i p_{ij}}{p_j} \alpha_j (x)\right)_{ j \in I}\\
&= \left(\frac{S_1^{(j)}(x)}{p_j}    \right)_{ j \in I}. 
\end{align*}
So, the result is true for $n =1$. Now 
suppose $T^k( x, \cdots, x) = 	\left(\frac{S_k^{(j)}(x)}{p_j} \right)_{ j \in I}$. Then we have 
\begin{align*}
T^{ k+1}( x, \cdots, x) &= T  	\left(\frac{S_k^{(j)}(x)}{p_j}\right)_{ j \in I}.\\
\end{align*}
So, by definition of $T$ (see Eq. \ref{Eq:DirectSumOperator}), the $j$-th coordinate of $T  	\left(\frac{S_k^{(j)}(x)}{p_j}\right)_{ j \in I}$ will be 
$$ \sum_{ i \in  I} \frac{p_i p_{i, j}}{ p_j} \alpha_j \left(  \frac{S_k^{(i)}(x)}{p_i}  \right).$$
Then, we note that 
\begin{align*}
 \sum_{ i \in  I} \frac{p_i p_{i, j}}{ p_j} \alpha_j \left(  \frac{S_k^{(i)}(x)}{p_i}  \right)    
&= \sum_{ i \in  I}  p_{i, j} ~\alpha_j \left(  \frac{S_k^{(i)}(x)}{p_j}  \right)    \\
&=\frac{S_{k+1}^{(j)}(x)}{p_j}
\end{align*}	
This completes the proof of $(1)$ and the proof $(2) $ is straightforward. 
\end{proof}

The following lemma is a key to our main result.

\begin{lem}\label{Lem:TisContraction}
	For $i\in I$, let $\alpha_i\in \mathbf{B}(L^p(M,\tau))$ be a positive contraction.
	Consider the linear operator $T$ defined on $\underset{ i \in I}{\oplus} L^p(M,\tau)$ as in Eq. \eqref{Eq:DirectSumOperator}, where $\underset{ i \in I}{\oplus}  L^p(M,\tau)$ equipped with the following norm
	\begin{align*}
		\norm{(x_j)}_p=\left(\sum_{i\in I}p_i\norm{x_i}_p^p \right)^\frac{1}{p},\quad (x_j) \underset{ i \in I}{\oplus}  L^p(M,\tau).
	\end{align*}
	Then $T$ is a positive contraction.
\end{lem}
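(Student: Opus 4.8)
The plan is to verify positivity directly and then establish the contraction property by a Jensen-type estimate, the key being that the scalar coefficients appearing in the definition of $T$ form a probability distribution in the appropriate variable, thanks to the stationarity of $(p_i)_{i\in I}$.

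First I would dispose of positivity. If $(x_j)\in\bigoplus_{i\in I}L^p(M,\tau)$ has all coordinates $x_i\geq 0$, then since each $\alpha_j$ is positive and the scalars $\frac{p_ip_{ij}}{p_j}$ are nonnegative (recall $p_i>0$ and $p_{ij}\geq 0$), each coordinate $y_j=\sum_{i\in I}\frac{p_ip_{ij}}{p_j}\alpha_j(x_i)$ of $T(x_j)$ is a nonnegative combination of positive elements, hence $y_j\geq 0$. Thus $T$ is positive.

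For the contraction estimate, write $w_{ij}=\frac{p_ip_{ij}}{p_j}$. The crucial observation is that, for each fixed $j$, the stationarity condition $p_j=\sum_{i\in I}p_ip_{ij}$ gives $\sum_{i\in I}w_{ij}=\frac{1}{p_j}\sum_{i\in I}p_ip_{ij}=1$, so $(w_{ij})_{i\in I}$ is a probability distribution. Consequently $y_j=\alpha_j\big(\sum_{i\in I}w_{ij}x_i\big)$ is $\alpha_j$ applied to a convex combination. Using in turn the contractivity of $\alpha_j$, the triangle inequality for $\norm{\cdot}_p$, and Jensen's inequality applied to the convex function $t\mapsto t^p$ against the probability weights $(w_{ij})_i$, I would estimate
\begin{align*}
\norm{y_j}_p^p\leq\Big\|\sum_{i\in I}w_{ij}x_i\Big\|_p^p\leq\Big(\sum_{i\in I}w_{ij}\norm{x_i}_p\Big)^p\leq\sum_{i\in I}w_{ij}\norm{x_i}_p^p.
\end{align*}

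Summing against the weights $p_j$ and interchanging the order of summation then yields
\begin{align*}
\norm{T(x_j)}_p^p=\sum_{j\in I}p_j\norm{y_j}_p^p\leq\sum_{j\in I}\sum_{i\in I}p_ip_{ij}\norm{x_i}_p^p=\sum_{i\in I}p_i\norm{x_i}_p^p\Big(\sum_{j\in I}p_{ij}\Big)=\sum_{i\in I}p_i\norm{x_i}_p^p,
\end{align*}
where the last equality uses that $P$ is row-stochastic, i.e. $\sum_{j\in I}p_{ij}=1$. This is precisely $\norm{T(x_j)}_p^p\leq\norm{(x_j)}_p^p$, so $T$ is a contraction. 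There is no serious obstacle in this argument; the only points requiring care are to apply the three inequalities in the correct order and to keep straight which summation uses stationarity ($\sum_i p_ip_{ij}=p_j$) and which uses row-stochasticity ($\sum_j p_{ij}=1$).
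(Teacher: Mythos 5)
Your proof is correct and follows essentially the same route as the paper: both arguments reduce to the triangle inequality, the contractivity of each $\alpha_j$, the convexity estimate $\left(\sum_i w_{ij}\norm{x_i}_p\right)^p\leq\sum_i w_{ij}\norm{x_i}_p^p$ for the probability weights $w_{ij}=\frac{p_ip_{ij}}{p_j}$ (which the paper obtains via H\"older with the splitting $w_{ij}=w_{ij}^{1/q}w_{ij}^{1/p}$ and you obtain via Jensen --- these are the same estimate), and finally the interchange of sums using row-stochasticity. No gaps; the positivity argument is also the standard one the paper omits as trivial.
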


\begin{proof}
	Let $(x_j) \in \underset{ j \in I }{ \oplus} L^p(M,\tau)$ and suppose 	$(y_j)=T(x_j)$.
Then note that
	\begin{align*}
		\norm{y_j}_p^p
		=&\norm{\sum_{i\in I} \frac{p_i p_{ij}}{p_j}\alpha_j(x_i)}_p^p\\
		\leq& \left(\sum_{i\in I} \frac{p_i p_{ij}}{p_j}\norm{\alpha_j(x_i)}_p\right)^p\\
		=& \left(\sum_{i\in I} \left(\frac{p_i p_{ij}}{p_j}\right)^\frac{1}{q} \left(\frac{p_i p_{ij}}{p_j}\right)^\frac{1}{p}\norm{\alpha_j(x_i)}_p\right)^p\\
		\leq& \left(\sum_{i\in I} \frac{p_i p_{ij}}{p_j}\right)^\frac{p}{q} \left(\sum_{i\in I}\frac{p_i p_{ij}}{p_j}\norm{\alpha_j(x_i)}_p^p\right), (\text{by Holder})\\
		=&\sum_{i\in I}\frac{p_i p_{ij}}{p_j}\norm{\alpha_j(x_i)}_p^p\\
		\leq & \sum_{i\in I}\frac{p_i p_{ij}}{p_j}\norm{x_i}_p^p.
	\end{align*}
	Consequently,
	\begin{align*}
		\norm{(y_j)}_p^p	
		=\sum_{j\in I} p_j\norm{y_j}_p^p
		\leq &\sum_{j\in I} \sum_{i\in I} p_i p_{ij}\norm{x_i}_p^p\\
		=& \sum_{i\in I} \left( p_i \norm{x_i}_p^p\right) \left(\sum_{j\in I}p_{ij}\right)\\
		=&\sum_{i\in I}  p_i \norm{x_i}_p^p\\
		=&\norm{(x_j)}_p^p.
	\end{align*}
	Thus, $T$ is a contraction. The positivity of $T$ follows trivially. We omit the details.
\end{proof}

\subsection{Ergodic theorems for spherical averages for free group actions}

In this subsection we discuss  the \emph{b.a.u} and \emph{a.u} convergence of the spherical averages of actions of a free group $\F_m$ on noncommutative spaces.

Let us now recall the following setting. 
Let $\F_m$ be the  free group on $m$ generators $ \{ a_1, a_2, \cdots, a_m \}$. In this subsection of we consider 
$$ I = \{ -m, \cdots, -1, 1, \cdots, m \}.$$
Let $(M, \tau)$ be a tracial measure space. For $ 1 \leq i \leq m $,  suppose $ \alpha_i : M \ri M $ is a normal  automorphism  corresponding  to generator $a_i$ such that $ \tau \circ \alpha_i = \tau $ and we set $ \alpha_{-i} = \alpha_i^{-1} $. \\
We take $P = (  p_{ ij})_{ i, j \in I}$ where $ p_{i, j } = 0 $ for $ i = -j $ and otherwise  $ p_{ i, j }= \frac{1}{2m-1}$ for $i, j \in I$.  Suppose $(p_i)_{ i \in I}$ is the stationary distribution with $p_i = \frac{1}{2m}$ for all $i \in I$.\\
Now consider  $R = \underset{ i \in I}{\oplus} M$ and recall the operator $ T : R \ri R$ from  Eq. \ref{Eq:DirectSumOperator}.
Let $x \in R$ and $x = (x_{-m},\ldots, x_{-1}, x_1, \ldots,  x_m) $.
 Suppose  $$T(x_{-m},\ldots, x_{-1}, x_1, \ldots,  x_m) =(y_{-m},\ldots, y_{-1}, y_1, \ldots,  y_m),$$ 
we write $y_j =T(x)_j$ and it is given by 
$$ T(x)_j=\frac{1}{2m-1} \displaystyle\sum_{i\in I,~~ i \neq -j } \alpha_j (x_i).$$
Suppose  $ \omega = i_1 i_2 \cdots i_n \in I^F $ such that $ i_k + i_{ k+1} \neq 0 $ for $k = 1, 2, \cdots, n-1$, then we note that 
\begin{align*}
\mu( \omega) &= \mu(i_1i_2 \cdots i_n) = \mu(i_1)p_{i_1 i_2}p_{i_2 i_3}\cdots p_{i_{n-1} i_n}\\
&=  \frac{1}{2m  (2m-1)^{n-1}} 
\end{align*}
We also recall that 
\begin{align*} 
S_n^{(i)} &=\sum_{\{\omega:\abs{\omega}=n,~ \omega_n=i\}}\mu(\omega)\alpha_\omega.
\end{align*}
Let $ \mathbb{S}_n = \{ i_1 i_2 \cdots i_n \in I^F :  \text{ such that } i_k + i_{ k+1} \neq 0, \text{ for } k = 1, 2, \cdots, n-1 \}$   and suppose $ \abs{ \mathbb{S}_n }  = \text{ denotes the  cardinality of the  set } \mathbb{S}_n$.  In fact in this case $$\abs{ \mathbb{S}_n} =\frac{1}{2m ( 2m-1)^{n-1}}.$$ 

Thus, for $ x \in M$,  we note that 
 \begin{align*}
 \frac{1}{\abs{ \mathbb{S}_n}} \sum_{ \omega \in  \mathbb{S}_n } \alpha_\omega(x) &= \frac{1}{2m ( 2m-1)^{n-1}}\sum_{ \omega \in \mathbb{S}_n  } \alpha_\omega(x)\\
&= \sum_{ i \in I} \sum_{\{ \omega \in I^F: \abs{\omega} = n, ~ \omega_n = i \} } \mu( \omega) \alpha_\omega(x) \\
&= \sum_{ i \in I } S_n^{(i)} = S_n(x)\\
\end{align*}

Further, we note that 
\begin{align*}
S_n(x) &=  \sum_{ i \in I } S_n^{(i)}(x)=   \frac{1}{2m} \sum_{ i \in I }  \frac{S_n^{(i)}(x)}{ p_i}\\
&= \frac{1}{2m} \sum_{ i \in I }  T^n(\tilde{x})_i.
\end{align*}
Thus, we remark that to study   the convergence of $ (S_n(x))$ we need to  study the  convergence  of the sequence $(T^n(x))$. \\
Let $ y= ( y_i) \in R$, then the adjoint of $T$ is given by 
$$ 
T^*(y)_j = \frac{1}{p_j} \sum_{ i \in I} p_i ~~p_{ ij} \alpha_{-i}( y_j).
$$
Now consider a unitary $U : L^2(R) \ri L^2(R) $ which is extension of the following trace preserving map
$$ R \ni (y_j) \longrightarrow \left( \alpha_j( y_{ -j})    \right). $$
Then we note that $U^2 = 1$ and $U TU = T^*$.
Then we have  the following relations (see \cite{Ad06}): 
 
	\begin{align}\label{Relation-}
	\nonumber&(1)~~(T^*)^{n}  T^n= \frac{2m -2}{2m-1} UT^{ 2n -1} + \frac{1}{2m -1}(T^*)^{n-1} T^{n-1} \\ 
	 &(2)~~T^{2n -1} = \frac{2m -1}{2m-2} U   (T^*)^{n}  T^n  +  \frac{1}{2m -2}  U (T^*)^{n-1} T^{n-1}. 
\end{align}

Suppose $\F_m^{ (2)}$ be the subgroup of $\F_m$  consisting of even words. Let $ M^{ (2)} = \{ x \in M : \alpha_{ \omega} (x) = x, \text{ for all } \omega \in \F_m^{ (2)} \}$ and $\E^{(2)}: M \ri A$ be the faithful normal $\tau$-preserving conditional expectation. So, it will extend to $ L^\Phi(M, \tau)$ (see Proposition \ref{ext})  and we denotes the extension by same notation $\E^{(2)}$.  
 Consider the Orlicz function $ \Psi(t) = t \log(1+t) $, for $ t \in [0, \infty)$, then we write 
 $$ L\log L(M, \tau) = L^\Psi( M, \tau).$$

\begin{thm}[\textbf{Non-commutative Bufetov ergodic theorem}]\label{Thm-Ergo-Orlicz}
	Let $(M, \tau)$  be a noncommutative tracial measure space  and  $\Phi$ be an $p$-convex  Orlicz function for some $p >1$.
For each  $1 \leq  i \leq m  $, suppose    $\alpha_i:(M,\tau)\rightarrow (M,\tau),~~~$ is  a factorizable $\tau$-preserving Markov operators.   Then we have the following. 

\begin{enumerate}
	\item 
For any  $x\in L^\Phi(M, \tau)$, the sequence  $\big(S_{2n}(x) \big)$ converges in  \emph{b.a.u} to $ \E^{ (2)}(x)$.
\item In addition suppose the function $\widetilde{ \Phi } (t) = \Phi( \sqrt{t}) $ for $ t \in [1, \infty) $ is also $p$-convex. Then for  any  $x\in L^\Phi(M, \tau)$, the sequence  $\big(S_{2n}(x) \big)$ converges  in  \emph{a.u} to $ \E^{ (2)}(x)$.
\end{enumerate} 
In particular, $(1)$ holds for all $ x \in L \log L(M, \tau) $, i.e, the sequence $\big(S_{2n}(x) \big)$ converges in  \emph{b.a.u} for all $ x \in L \log L(M, \tau)$.
\end{thm}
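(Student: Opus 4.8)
The plan is to deduce the convergence of the even spherical averages $S_{2n}$ from the convergence of the Rota products $(T^*)^nT^n$ furnished by Theorem~\ref{Thm:NCRotaOrlicz}, by transporting the latter from $R=\underset{i\in I}{\oplus}M$ down to $M$. Write $\tilde x=(x,\dots,x)\in R$ for the diagonal vector and $\Lambda:R\to M$, $\Lambda(y)=\frac{1}{2m}\sum_{i\in I}y_i$, for the coordinate average, so that $S_k(x)=\Lambda(T^k\tilde x)$ by Lemma~\ref{Lem:DiagonalOperatorTn}, and recall $U^2=1$, $UTU=T^*$. The first task is the combinatorial identity
\[
\Lambda\big(U\,T^{2n+1}\tilde x\big)=\sum_{j\in I}\alpha_j\big(S_{2n+1}^{(-j)}(x)\big)=S_{2n}(x),
\]
which I would prove using $\alpha_{-j}=\alpha_j^{-1}$: applying $\alpha_j$ to a reduced word of length $2n+1$ ending in the letter $-j$ cancels that last letter, and after summing over $j\in I$ the Markov weights $p_{\cdot,-j}=\frac{1}{2m-1}$ recombine exactly into the uniform sphere of radius $2n$. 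Feeding this into the first relation of \eqref{Relation-} with $n$ replaced by $n+1$, namely $U\,T^{2n+1}=\tfrac{2m-1}{2m-2}\big[(T^*)^{n+1}T^{n+1}-\tfrac{1}{2m-1}(T^*)^nT^n\big]$, expresses $S_{2n}(x)$ as a fixed finite $\Lambda$-combination of the two Rota products $(T^*)^{n+1}T^{n+1}\tilde x$ and $(T^*)^nT^n\tilde x$.

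Next I would check that $T$ is a factorizable $\mathrm{tr}$-preserving Markov operator on the tracial probability space $(R,\mathrm{tr})$, $\mathrm{tr}(y)=\sum_i p_i\tau(y_i)$ (this is where the assumption that the $\alpha_i$ are $\tau$-preserving automorphisms is used), so that Theorem~\ref{Thm:NCRotaOrlicz} gives b.a.u.\ convergence of $T^n(T^*)^n(z)$ for every $z\in L^\Phi(R,\mathrm{tr})$. Since $UTU=T^*$ and $U^2=1$ yield $(T^*)^nT^n=U\,T^n(T^*)^n\,U$, applying this with $z=U\tilde x$ shows that $(T^*)^nT^n\tilde x$ converges b.a.u.\ in $L^\Phi(R)$.

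It then remains to see that $U$ and $\Lambda$ preserve b.a.u.\ (resp.\ a.u.)\ convergence. The operator $U$ is a $\mathrm{tr}$-preserving normal $*$-automorphism of $R$, hence an isometry sending projections to projections of equal trace, so it preserves both modes of convergence. For $\Lambda$ I would use the direct-sum structure: every projection of $R$ has the form $e=\oplus_i e_i$, and setting $f=\bigwedge_{i}e_i\in\CP(M)$ gives $\tau(1-f)\le\sum_i\tau(1-e_i)=2m\,\mathrm{tr}(1-e)$ together with $\|f(\Lambda(y^{(k)})-\Lambda(y))f\|\le\frac{1}{2m}\sum_i\|e_i(y_i^{(k)}-y_i)e_i\|\to 0$; the one-sided estimate is identical, giving the a.u.\ case. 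Intersecting the finitely many $\varepsilon$-projections coming from the two Rota products, the $\Lambda$-combination of the first step converges b.a.u., i.e.\ $S_{2n}(x)$ converges b.a.u.

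To identify the limit as $\E^{(2)}(x)$ I would compare with convergence in measure: b.a.u.\ convergence implies convergence in measure, while the $L^2$ mean-ergodic behaviour of $S_{2n}$ (equivalently, $\lim_n(T^*)^nT^n$ is the conditional expectation onto the common $T,T^*$-invariant part, unwound through $U$ and $\Lambda$) identifies the measure limit as $\E^{(2)}(x)$ on a dense subset; uniqueness of measure limits then fixes the b.a.u.\ limit, and the Banach principle (Theorem~\ref{Thm:BanachPrincipleAu}) propagates the conclusion to all of $L^\Phi(M,\tau)$. Under the extra hypothesis that $\widetilde{\Phi}(t)=\Phi(\sqrt t)$ is $p$-convex, Theorem~\ref{Thm:NCRotaOrlicz} upgrades the Rota products to a.u.\ convergence, and since $U,\Lambda$ preserve a.u.\ convergence the same scheme yields part~(2); the $L\log L$ statement follows because $\Phi(t)=t\log(1+t)$ is $p$-convex. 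The main obstacle is the parity bookkeeping in the first step — making the \emph{even} average $S_{2n}$ emerge from the \emph{odd} power $T^{2n+1}$ after the cancellation induced by $U$, with all Markov normalizations collapsing exactly — together with the verification that $T$ is factorizable so that Rota applies (and, if $\tau$ is merely semifinite, the standard reduction to the finite-trace case in which Theorem~\ref{Thm:NCRotaOrlicz} was proved).
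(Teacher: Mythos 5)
Your overall route is the same as the paper's: apply the noncommutative Rota theorem (Theorem \ref{Thm:NCRotaOrlicz}) to the Markov operator $T$ on $R=\underset{i\in I}{\oplus}M$, use the relation \eqref{Relation-} together with the cancellation $U:(y_j)\mapsto(\alpha_j(y_{-j}))$ to convert the odd powers $UT^{2n+1}$ into the even spherical averages $S_{2n}$, and then identify the limit on a dense subset and extend by equicontinuity/density. Your combinatorial identity $\Lambda(UT^{2n+1}\tilde x)=S_{2n}(x)$ and the use of relation (1) of \eqref{Relation-} are correct, and you in fact spell out the reduction-to-Rota step in more detail than the paper does.

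The one genuine gap is the identification of the limit as $\E^{(2)}(x)$ on the dense subset, which you dispose of in a single clause (``the $L^2$ mean-ergodic behaviour of $S_{2n}$ \dots identifies the measure limit''). This is the only nontrivial analytic step in that part of the argument and it is not automatic: knowing that $\lim_n(T^*)^nT^n$ is a conditional expectation in $R$ does not by itself tell you that its push-down through $U$ and $\Lambda$ is the expectation onto the $\F_m^{(2)}$-fixed points. The paper's mechanism is concrete: for $x\in M\cap L^1(M,\tau)$ the a.u.\ limit $\widehat{x}$ is also an SOT limit, the recursion $S_1^2\circ S_{2n}=\left(\tfrac{2m-1}{2m}\right)^2S_{2n+2}+\tfrac{2m-1}{2m^2}S_{2n}+\tfrac{1}{4m^2}S_{2n-2}$ forces $S_1^2(\widehat{x})=\widehat{x}$, strict convexity of $L^2(M,\tau)$ (Guivarc'h's argument) upgrades this to $\alpha_\omega(\widehat{x})=\widehat{x}$ for all $\omega\in\F_m^{(2)}$, and the duality $\tau(yS_{2n}(x))=\tau(yx)$ for $y\in M^{(2)}$ then pins down $\widehat{x}=\E^{(2)}(x)$. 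You need to supply this (or an equivalent) argument; once it is in place, your $3\varepsilon$ measure-limit argument for passing to general $x\in L^\Phi(M,\tau)$ is a legitimate substitute for the paper's explicit nested-projection bookkeeping. Your remaining caveats (factorizability of $T$, the state-versus-semifinite-trace mismatch with Theorem \ref{Thm:NCRotaOrlicz}) are real but are also left implicit in the paper.
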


\begin{proof}
	
	The convergence part of the proof  follows from the Theorem \ref{Thm:NCRotaOrlicz} and the Eq.\ref{Relation-}. 
	Only we need to  show the invariance of the limit. 
	
Indeed, for  $ x \in L^\Phi(M, \tau)$,  we note that the sequence $(S_{2n}(x) )$ converges in \emph{b.a.u}. Moreover, if $\Phi$ satisfies the condition $(2)$ of Theorem \ref{Thm-Ergo-Orlicz}, then $(S_{2n}(x) )$ converges in \emph{a.u}.  
In particular, if $ x \in M \cap L^1(M, \tau)$, then the sequence $(S_{2n}(x) )$ converges in \emph{a.u}.  
For  $ x \in M \cap L^1(M, \tau) $, note the sequence $ (  S_{2n}(x))$ is bounded (in $\norm{ \cdot}_\infty$) and  
consequently, for $ x \in M \cap L^1(M, \tau)$, the sequence $(S_{2n}(x) )$ converges in \emph{SOT}. Since $ \alpha_i$  for $ 1 \leq i \leq m$ are $ \tau$-preserving, so, further we have the following 
$$ \tau(y S_{2n} (x)) = \tau(yx) \text{ for all } y \in M^{(2)} \text{ and } x \in M \cap L^1(M, \tau).$$

 For $ x \in M \cap L^1(M, \tau)$, suppose $ \widehat{x}= \text{a.u-}\lim S_{ 2n}(x)$. Then for all $ y \in M^{ (2)}$, we note the following 
 \begin{align*}
\tau( y \E^{(2)}(x )) & =  \tau(y x), \text{ as } \tau\circ \E^{(2)} = \tau\\
&=  \lim  \tau(y S_{2n} (x)) = \tau(y \widehat{x}).
 \end{align*} 
Therefore, to deduce $ \widehat{ x } = \E^{ (2)} $, it is enough to show that  $ \alpha_{ \omega}( \widehat{x} ) = \widehat{x}$, for all $ \omega \in \F^{ (2)}$.
Since $ \text{SOT-}\lim S_{ 2n}(x)= \widehat{x} $, so, by using the following  following standard identity: 
$$ S_1^2 \circ S_{2n}  = \left( \frac{2m -1}{2m} \right)^2 S_{ 2n +2} +  \frac{2m -1}{2m^2} S_{ 2n} +   \frac{1}{4m^2}  S_{ 2n -2},$$
we obtain that $  S_1^2 ( \widehat{x}) = \widehat{x} $. Further, since $ \widehat{ x} \in L^2(M, \tau)$, so, using the strictly convexity of $ L^2(M, \tau)$,  we  obtain  that $$ \alpha_{ \omega}( \widehat{ x}) = \widehat{ x},  \text{ for all } \omega \in \F_m^{(2)}.$$
Therefore, $\text{ for all } x \in M\cap L^1(M, \tau)$, we obtain $$ \text{b.a.u-}\lim S_{ 2n}(x)= \text{a.u-}\lim S_{ 2n}(x) =\text{SOT-}\lim S_{ 2n}(x)= \widehat{x} = \E^{(2)}(x ).$$\\
Now if  $ \widehat{ x } =  \text{b.a.u-}\lim S_{ 2n}(x)$ for $ x \in L^\Phi( M, \tau) $, we like to  show that $\widehat{x} = \E^{(2)}(x ).$
Indeed, let $ x \in L^\Phi( M, \tau)_+ $ and since $ M \cap L^1(M, \tau)$ is dense in $ L^\Phi( M, \tau) $, so, there exists an increasing sequence $ (x_k) $ in  $ M \cap L^1(M, \tau)_+$ with $ x_k \leq x $ for all $ k \in \N$ (apply  Lemma \ref{dense} and possibly find a further  subsequence), such that 

\begin{enumerate}
	\item $  \underset{ k \ri \infty }{\lim} \norm{ x - x_k }_\Phi = 0 $, 
		\item  $ x = \text{b.a.u-}\underset{ n \ri \infty }{\lim}  x_k $ and  
		
		\item $S_{2n}(x)= \text{b.a.u-}\underset{ k \ri \infty }{\lim} S_{2n} (x_k)$ for all $n \in \N$.
\end{enumerate} 
 Suppose $ \widehat{ x} = \text{ b.a.u-} S_{2n}(x) $ and we like to show  that $ \E^{ (2)}(x) = \widehat{x}$. 
First we like to  note that $ (\widehat{x_k})$ converges to $ \widehat{x} $ in \emph{ b.a.u}.
Indeed, as $ \widehat{ x} = \text{ b.a.u-} S_{2n}(x) $, so, there exists a projection 
$e_1$ such that 
\begin{enumerate}
	\item $\tau( 1 - e_1) \leq \frac{ \eps }{ 4}$ and 
	\item $ \underset{ k \in \N }{\sup}  \norm{ e_1 (S_{2n}( x ) -\widehat{x}) e_1 } \leq \frac{\delta}{ 4}$ for all $n \in \N$.
\end{enumerate}
Again as $S_{2n}(x)= \text{b.a.u-}\underset{ k \ri \infty }{\lim} S_{2n} (x_k)$ for all $n \in \N$. So,there exists a projection $e_2$ such that 
\begin{enumerate}
\item $\tau( 1 - e_2) \leq \frac{ \eps }{ 4}$ and 
\item $ \underset{ k \in \N }{\sup}  \norm{ e_2 (S_{2n}( x -x_k) e_2 } \leq \frac{\delta}{ 4}$ for all $n \in \N$.
\end{enumerate}
Further, as $ \text{b.a.u-}\lim_{ n \ri \infty } S_{2n}(x_k) = \widehat{ x_k}.$ So, there exists a  sequence of projections $(e_k) $ such that 

\begin{enumerate}
	\item $\tau( 1 - p_k) \leq \frac{ \eps }{ 4^k}$ and 
	\item $ \underset{ n \in \N }{\sup}  \norm{ p_k (S_{2n}(x_k) - \widehat{x_k}) p_k } \leq \frac{\delta}{ 4}$ for all $n \in \N$.
\end{enumerate}

Suppose $ e =     e_1 \wedge e_2  \overset{\infty}{\underset{{k = 1} }{\wedge}}  p_k $ and note that 
$$ \tau(1-e) \leq  \tau( 1 - e_1) + \tau( 1 - e_2) + \sum_{ k =1}^\infty \tau( 1-p_k) \leq \frac{\eps}{2} +\sum_{ k =1}^\infty \frac{\eps}{2^{k+2}}\leq \eps.$$
Then for all $k \in \N$, we note that 
\begin{align*}
\norm{ e \left( \widehat{x_k} -\widehat{x} \right)e } &= \norm{ e \left( \widehat{x_k} - S_{2n}( x_k) + S_{2n}( x_k) - S_{2n}(x) +  S_{2n}(x) -\widehat{x} \right)e }\\
&\leq  \norm{ p_k \left( \widehat{x_k} - S_{2n}( x_k) \right)p_k } + 
\norm{ e_2 \left(  S_{2n}( x_k-x) \right)e_2 }
+ \norm{ e_1 \left(  S_{2n}(x) -\widehat{x} \right)e_1 }\\
& \leq \delta.
\end{align*}
Thus, $ \widehat{x} = \text{ b.a.u-} \lim  \widehat{x_k}$. But, we note that $ (x_k) \subseteq M \cap L^1(M, \tau)_+ $, so, by the first part we have $ \E^{(2)}( \widehat{x_k}  ) = \widehat{x_k}$ for all $ k \in \N$. Therefore, similar arguments can be used to obtain that $ \E^{(2)}( \widehat{x}  ) = \widehat{x}$.  This completes the proof.

 \end{proof}


\subsection{ Ergodic theorems for averages of spherical averages for free group actions}

In this subsection we consider a noncommutative tracial  measure space $(M, \tau)$, where $\tau$ is a f.n semifinite trace $\tau$ on $M$ and let $1 \leq p<\infty$. For $n\geq 0$, let $T_n: L^p(M)\rightarrow L^p(M)$ be a sequence of linear operator satisfying

\begin{align}\label{Eq:FreeGroupOperator}
	&A_1. ~~  	 T_0 =\id \text{ and }\\  
\nonumber	&A_2.  T_1 T_n =\lambda T_{n+1}+(1-\lambda)T_{n-1}.
	 \text{ for all } n \in \N, \text{  where } 1/2 <  \lambda < 1.
\end{align}
We define the average of these operators as  
\begin{align*}
	M_n=\frac{1}{n+1}\sum_{r=0}^{n}T_r.
\end{align*}
In this subsection we established a maximal inequality and discuss  the convergence of the sequence $(M_n)$. 
\begin{rem}
We remark that such  examples naturally arise to study the convergence of averages of spherical averages of  free group actions.  
Let $\F_m$ be the free group with generators $\{a_1, \ldots, a_r, a_m^{-1}, \ldots, a_m^{-1}\}$. Let $\alpha: G \to Aut(M)$ denote a group homomorphism. Define,
\begin{align*}
S_n := \frac{1}{\abs{\mathbb{S}_n}} \sum_{a \in \mathbb{S}_n} \alpha(a), ~n \in \N,
\end{align*}
where, $\mathbb{S}_n$ denotes the set of words of length $n$. It is well known that $S_m \circ S_n = S_m \circ S_n$ for all $m,n \in \N$. Furthermore,
\begin{align*}
S_1 \circ S_n = \lambda S_{n+1} + (1-\lambda) S_{n-1}, ~ n \in \N,
\end{align*}
and $S_0(x)=x$ for all $x \in M$, where $1/2 < \lambda < 1$ is a fixed number.
\end{rem}

Let $\mathfrak{L}_p= \overline{\text{conv}}^{s.o.t.} \{S:L^p(M)\rightarrow L^p(M) \text{ positive Lamperti contractions}\}$.

We have the following maximal inequality of the sequence $(M_n)$. 
\begin{thm}\label{Thm:MaximalIneualityForFreeGp1}
Let $(M, \tau)$ be a noncommutative tracial measure space.  Fix $1<p<\infty$. For $n\geq 0$, let $T_n: L^p(M)\rightarrow L^p(M)$ be sequence of linear operator satisfying Eq. \eqref{Eq:FreeGroupOperator}. Assume that 	$T_1\in \mathfrak{L}_p$, then 
 there exists a constant  $C_p>0$ depending only on $p$ such that 
	\begin{align*}
		\norm{{\sup_{n}}^{+} M_n x}_{L^p(M,\ell_\infty)}\leq C_p \norm{x}_{p}.
	\end{align*}
	for all $x\in L^p(M)_+$.
\end{thm}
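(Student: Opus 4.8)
The plan is to reduce the maximal inequality for the ball-type averages $M_n$ to the noncommutative Dunford--Schwartz maximal ergodic theorem of Junge and Xu \cite{JX07}, applied to a single operator manufactured from $T_1$, exploiting that the whole sequence $(T_n)$, and hence $(M_n)$, is determined by $T_1$ through the recurrence in Eq.~\eqref{Eq:FreeGroupOperator}. So the proof splits into an ergodic-maximal input for $T_1$ and an algebraic transfer from $T_1$ to $M_n$.

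The first step is to obtain a maximal inequality for the Ces\`aro averages of $T_1$ from the hypothesis $T_1\in\mathfrak{L}_p$. Every positive Lamperti contraction on $L^p(M,\tau)$ reduces to the Dunford--Schwartz situation, and the Dunford--Schwartz operators form a convex set closed in the strong operator topology on $L^p(M,\tau)$; since $\mathfrak{L}_p$ is by definition the strong closure of the convex hull of positive Lamperti contractions, the relevant maximal estimate is stable under the convex combinations and strong limits defining $\mathfrak{L}_p$. Consequently there is a constant $C_p$ depending only on $p$ with $\norm{\sup_N^+ \frac{1}{N}\sum_{k=0}^{N-1}T_1^k x}_p\le C_p\norm{x}_p$ for all $x\in L^p(M)_+$, where \cite{JX07} supplies the inequality for the Dunford--Schwartz case.

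The second, decisive step is to pass from the Ces\`aro averages of $T_1$ to the averages $M_n$. The recurrence $T_1T_n=\lambda T_{n+1}+(1-\lambda)T_{n-1}$ presents $(T_n)$ as a nearest-neighbour walk on $\N\cup\{0\}$ driven by $T_1$, which is exactly the structure captured by the direct-sum device of Eq.~\eqref{Eq:DirectSumOperator} together with Lemmas~\ref{Lem:DiagonalOperatorTn}--\ref{Lem:TisContraction}. Following Bufetov's reduction \cite{Bu02} of such a second-order recurrence to the Ces\`aro averages of a single positive operator, I would build an operator $T$ of this type for which $\frac{1}{n}\sum_{k=0}^{n-1}T^k(x,\dots,x)$ recovers the weighted sub-averages (as in Lemma~\ref{Lem:DiagonalOperatorTn}), so that pairing against the stationary weights $(p_j)$, which sum to $1$, returns the average of the $T_r$. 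By Lemma~\ref{Lem:TisContraction} this $T$ is a positive contraction on the weighted space $\bigoplus_{i\in I}L^p(M,\tau)$; granting in addition that it is Dunford--Schwartz on the direct-sum algebra with its natural weighted trace, one applies the Junge--Xu inequality to $T$, specialises the $L^p(M,\ell^\infty)$-bound to the diagonal vector $(x,\dots,x)$, reads off the coordinates, and recombines using $\sum_{j\in I}p_j=1$ and the convexity of the $\sup^+$-norm to obtain $\norm{\sup_n^+ M_n x}_{L^p(M,\ell^\infty)}\le C_p\norm{x}_p$.

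I expect the transfer to be the main obstacle, not the maximal inequality in isolation. Two points need care: first, verifying that the auxiliary operator $T$ is genuinely Dunford--Schwartz, i.e. simultaneously contractive on $L^1$ and on $L^\infty$, which goes beyond the $L^p$-contractivity recorded in Lemma~\ref{Lem:TisContraction}; and second, checking that extracting the scalar bound for $M_n$ from the vector-valued bound for the Ces\`aro averages of $T$ is compatible with the $L^p(M,\ell^\infty)$-module structure, including the harmless shift between the indexing of $M_n$ and of the averages $A_n^{(j)}$. A further delicate point is the passage through $\mathfrak{L}_p$: while Dunford--Schwartz membership is preserved under convex combinations and strong limits, one must confirm that the constant furnished by \cite{JX07} is genuinely uniform over the class, so that it survives taking the strong closure.
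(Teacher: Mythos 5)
Your overall architecture (maximal input for the Ces\`aro averages of $T_1$, plus a transfer to $M_n$) matches the paper's, but both halves of your argument have genuine gaps, and the second half takes a route that is not the one that actually works here.

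First, the reduction of the hypothesis $T_1\in\mathfrak{L}_p$ to the Dunford--Schwartz maximal theorem of Junge--Xu fails. A positive Lamperti contraction on $L^p(M,\tau)$ is in general \emph{not} simultaneously contractive on $L^1$ and $L^\infty$, so it does not ``reduce to the Dunford--Schwartz situation''; the paper's remark following Theorem \ref{Thm:MaximalIneualityForFreeGp2} stresses precisely that $T_1$ falls outside the Dunford--Schwartz category (and outside the subinvariant-state setting as well). The correct input is \cite[Theorem 1.3]{HRW23}, which is a maximal ergodic inequality proved directly for operators in $\mathfrak{L}_p$; this is a substantial theorem, not a corollary of \cite{JX07}. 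Moreover, your claim that the maximal estimate ``is stable under the convex combinations and strong limits defining $\mathfrak{L}_p$'' is exactly the nontrivial point: the quantity $\norm{\sup_n^+\cdot}_p$ is not linear in the operator, so uniformity of the constant over the convex hull and its strong closure requires proof and cannot be asserted as a consequence.

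Second, the transfer from the Ces\`aro averages $S_N=\frac{1}{N+1}\sum_{r=0}^{N}T_1^r$ to $M_n$ is not done in the paper via the direct-sum operator of Eq.~\eqref{Eq:DirectSumOperator} (that device belongs to the free-semigroup subsection with a Markov measure, and your own caveat --- that the auxiliary $T$ would need to be Dunford--Schwartz --- runs into the same obstruction as above). The missing idea is an order-theoretic domination: by \cite[Theorem 6.7]{BS23} the recurrence $T_1T_n=\lambda T_{n+1}+(1-\lambda)T_{n-1}$ with $\lambda>1/2$ forces $M_n(x)\le C_w\,S_{3n}(x)$ for all $x\in L^p(M)_+$, with $C_w$ depending only on $\lambda$. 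Once this pointwise (positive-cone) domination is in hand, the maximal bound for $(M_n)$ follows immediately from the one for $(S_{3n})$, since for positive sequences $\norm{\sup_n^+ y_n}_p$ is monotone under $y_n\le z_n$. Without this domination your recombination step does not produce the stated inequality.
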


\begin{proof}
	For $n\geq 0$, denote 
	\begin{align*}
		S_n=\frac{1}{n+1}\sum_{r=0}^{n}T_1^r. 
	\end{align*}
	Recall that $L^p(M)$ is a ordered Banach space. 
	Notice that by \cite[Theorem 6.7]{BS23}, there exists a constant $C_w>0$ such that 
	\begin{align}\label{Eq:DominatedBySingleOp}
		M_n(x)\leq C_w S_{3n}(x), \quad\text{ for all } x\in L^p(M)_+.
	\end{align}
	Since $ 	T_1\in \mathfrak{L}_p $, apply \cite[Thm. 1.3]{HRW23} to $T_1$. Then there exists a constant $C_p^\prime>0$ $($depending only on $p)$
	such that 
	\begin{align}\label{Eq:MaximalInequalityForLamperti}
		\norm{{\sup_{n}}^{+} S_{3n}(x)}_{L^p(M,\ell_\infty)}\leq C_p^\prime \norm{x}_{p},
	\end{align} 
	for all $x\in L^p(M)_+$. 
	
	Set $C_p=C_p^\prime C_w>0$. Then, the result follows from Eqs. \eqref{Eq:DominatedBySingleOp} and \eqref{Eq:MaximalInequalityForLamperti}. This completes the proof.
\end{proof}

\begin{thm}\label{Thm:MaximalIneualityForFreeGp2}
	Let $M$ be a von Neumann algebra equipped with a $f.n.s.$ trace $\tau$. Fix $1<p, p^\prime<\infty$ such that ${1}/{p}+{1}/{p^\prime}=1$. For $n\geq 0$, let $T_n: L^p(M)\rightarrow L^p(M)$ be sequence of linear operator satisfying Eq. \eqref{Eq:FreeGroupOperator}. Assume that $T_1$ is 
	a positive Lamperti operator such that the adjoint operator $T^*: L_{p^\prime}(M)\rightarrow  L_{p^\prime}(M)$ is also Lamperti and $\sup_{n\geq 1}\norm{T_1^n}_{L^p(M)\rightarrow L^p(M)}=K<\infty$.
	Then, 
	\begin{align*}
		\norm{{\sup_{n}}^{+} M_n x}_{L^p(M,\ell_\infty)}\leq K C_p \norm{x}_{p}.
	\end{align*}
	for all $x\in L^p(M)_+$, where $C_p$ is a constant $($depending only on $p)$ appeared in Theorem \ref{Thm:MaximalIneualityForFreeGp1}.
\end{thm}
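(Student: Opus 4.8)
The plan is to run the proof of Theorem \ref{Thm:MaximalIneualityForFreeGp1} unchanged up to the point where the contractivity of $T_1$ was invoked, and to replace the single-operator maximal inequality for Lamperti contractions by its power-bounded counterpart; tracking the power bound is precisely what produces the extra factor $K$. Throughout, set $S_n=\frac{1}{n+1}\sum_{r=0}^{n}T_1^r$, the Ces\`aro averages of $T_1$ (not to be confused with the spherical averages).

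First I would reduce to a single-operator estimate. The order domination $M_n(x)\le C_w S_{3n}(x)$ for $x\in L^p(M)_+$ from \cite[Theorem 6.7]{BS23} is a purely order-theoretic comparison resting only on the positivity of $T_1$ and the recurrence in Eq. \eqref{Eq:FreeGroupOperator}; it therefore applies verbatim in the present setting, where contractivity is dropped. Since the $L^p(M,\ell_\infty)$-norm is monotone on positive sequences, this inequality reduces the theorem to the Ces\`aro maximal inequality $\norm{{\sup_{n}}^{+} S_{3n}x}_{L^p(M,\ell_\infty)}\le K C_p'\norm{x}_p$ for the single power-bounded operator $T_1$, where $C_p'$ is the constant of \cite[Thm. 1.3]{HRW23}.

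The hard part will be this last inequality. Unlike in Theorem \ref{Thm:MaximalIneualityForFreeGp1}, $T_1$ need not be a contraction, so \cite[Thm. 1.3]{HRW23}, which is tailored to positive Lamperti contractions (equivalently, to elements of $\mathfrak{L}_p$), does not apply directly. The tempting shortcut is to pass to the equivalent norm $q(x):=\sup_{k\ge 0}\norm{T_1^k x}_p$, which obeys $\norm{x}_p\le q(x)\le K\norm{x}_p$ and makes $T_1$ a $q$-contraction; the obstacle is that $q$ is not an $L^p$-norm and is incompatible with the factorization defining $L^p(M,\ell_\infty)$, so the vector-valued maximal machinery cannot be transported along it. To get around this I would instead exploit the hypotheses that $T_1$ is positive Lamperti with $T_1^*$ also Lamperti: by the structure theory of Lamperti (separation-preserving) maps on noncommutative $L^p$, $T_1$ factors as a positive weight composed with a Jordan $*$-morphism, and the power bound $K$ can be absorbed by renormalizing the weight to produce a genuine positive Lamperti contraction whose Ces\`aro maximal inequality is governed by \cite[Thm. 1.3]{HRW23}; this is expected to cost exactly the multiplicative constant $K$, giving $\norm{{\sup_{n}}^{+} S_{3n}x}_{L^p(M,\ell_\infty)}\le K C_p'\norm{x}_p$.

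Finally I would combine the two steps: by monotonicity of the vector-valued norm and $M_n(x)\le C_w S_{3n}(x)$,
\[
\norm{{\sup_{n}}^{+} M_n x}_{L^p(M,\ell_\infty)}\le C_w\,\norm{{\sup_{n}}^{+} S_{3n}x}_{L^p(M,\ell_\infty)}\le K C_w C_p'\,\norm{x}_p = K C_p\,\norm{x}_p,
\]
with $C_p=C_w C_p'$ the constant of Theorem \ref{Thm:MaximalIneualityForFreeGp1}, which is the claimed bound.
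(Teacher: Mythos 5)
Your first step --- the order domination $M_n(x)\le C_w S_{3n}(x)$ for $x\in L^p(M)_+$ via \cite[Theorem 6.7]{BS23}, followed by monotonicity of the $L^p(M,\ell_\infty)$-norm --- is exactly the reduction the paper performs in Theorem \ref{Thm:MaximalIneualityForFreeGp1}, and you are right that it only uses positivity of $T_1$ and the recurrence, so it survives the loss of contractivity. The divergence, and the gap, is in the second step. The paper's entire proof of Theorem \ref{Thm:MaximalIneualityForFreeGp2} is to replace \cite[Theorem 1.3]{HRW23} by \cite[Theorem 1.4]{HRW23}, which is precisely the maximal ergodic inequality for positive power-bounded Lamperti operators whose adjoint is also Lamperti, with the constant $K C_p'$ built in --- i.e.\ the exact single-operator statement your reduction requires. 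You instead try to manufacture that statement from \cite[Theorem 1.3]{HRW23} by ``renormalizing the weight'' in the Lamperti factorization of $T_1$ so as to absorb the power bound $K$ into a genuine contraction.

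That renormalization is asserted, not proved, and it is not a routine absorption: power-bounded operators are in general not similar to contractions, and even within the Lamperti class the powers $T_1^n$ involve cocycle-type products of the weight along the orbit of the underlying Jordan morphism, so producing a single dominating Lamperti \emph{contraction} whose Ces\`aro maximal function controls that of $T_1$ up to a factor $K$ requires a genuine construction (this is essentially the content of the proof of \cite[Theorem 1.4]{HRW23}, following Kan's analysis in the commutative case, and it is where the hypothesis that $T_1^*$ is also Lamperti actually gets used --- a hypothesis your sketch invokes but never exploits). Your phrase ``this is expected to cost exactly the multiplicative constant $K$'' is carrying the whole weight of the theorem. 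The repair is immediate: cite \cite[Theorem 1.4]{HRW23} directly in place of your renormalization paragraph, and the rest of your argument then closes as written.
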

\begin{proof}
	The proof is obtained by modifying the proof of Theorem \ref{Thm:MaximalIneualityForFreeGp1} by replacing the role of \cite[Theorem 1.3]{HRW23} with \cite[Theorem 1.4]{HRW23}. This completes the proof.
\end{proof}

\begin{rem}
	We remark  that to  obtain the maximal inequality of $(M_n)$ we only assume that   $T_1 \in \mathfrak{L}_p$. Indeed, note that neither  $T_1$ falls in Dunford-Schwartz category nor there exists a f.n state $\varphi$ such that 
	$\varphi \circ T_1 \leq \varphi$ and $T_1 \circ \sigma_t^\varphi = \sigma_t^\varphi \circ  T_1$ for all $ t \in R$. Although the proofs of the obtained results are not technical, but the this results are new compare to interms of the assumption on maps (see) results obtained in  
\end{rem}

\subsection{ Ergodic theorems for spherical averages for free semigroup   actions}
Throughout this section, we assume that $\varphi$ is a faithful normal state on a von Neumann algebra $M$. In this subsection we  discuss the convergence of averages of spherical averages of free semigroup action on noncommutative probability measure space $(M, \varphi)$. We consider $I=\{1,2,\ldots, m\}$. Further, let  $j\in I$, let $\alpha_j:M\rightarrow M$ be a normal linear map and unless stated otherwise, we assume that $(\alpha_j)_{ j \in I}$  fulfils the following conditions: 
\begin{enumerate}
	\item[(4.I)]$\alpha_j$ is a contraction on $M$;
\item[(4.II)] $\alpha_j$ is completely positive;
\item[(4.III)]$\varphi\circ \alpha_j\leq \varphi$;
\item[(4.IV)] $\sigma_t^\varphi\circ \alpha_j=\alpha_j\circ \sigma_t^\varphi$;
\item[(4.V)] $\alpha_j$ is symmetric with respect to $\varphi$, i.e., $\varphi(\alpha_j(y)^* x)=\varphi(y^*\alpha_j(x))$ for all $x,y\in M$.
\end{enumerate}

For $1\leq p<\infty$, since $\{D_{\varphi}^{\frac{1}{2p}}y D_{\varphi}^{\frac{1}{2p}}:y\in M\}$ is dense in Haarerup $L^p$-space $L^p(M,\varphi)$ , where $D_\varphi$ is the density operator associated with $\varphi$, the 
map 
\begin{align*}
	D_{\varphi}^{\frac{1}{2p}}y D_{\varphi}^{\frac{1}{2p}}\rightarrow D_{\varphi}^{\frac{1}{2p}}\alpha_j(y) D_{\varphi}^{\frac{1}{2p}}, \quad y\in M,
\end{align*}
extends to a positive bounded linear map from $L^p(M,\varphi)$ to $L^p(M,\varphi)$ $($see, \cite[Theorem 5.1]{HJX10}$)$. We denote the extension by $\alpha_j$ with slight abuse of notation. Note that $\norm{\alpha_j}_{L^p(M,\varphi)\rightarrow L^p(M,\varphi)}\leq 1$.\\
Before stating our next theorem we recall the following definition.
\begin{defn}\label{Defn:AUASconvergence}
	Let $M$ be a von Neumann algebra equipped with a faithful normal state $\varphi$. 
	\begin{enumerate}
		\item A sequence $\{x_n\}_{n\geq 1}\subseteq M$ said to be converges to $x\in M$ almost uniformly $($a.u. in short$)$ $($respectively, bilaterally almost uniformly $($b.a.u. in short$))$ if for every $\epsilon>0$, there exists a projection $e\in M$ such that $\varphi(1-e)\leq \epsilon$ and $\lim_{n\rightarrow\infty} \norm{(x_n-x) e}_\infty=0$ $($respectively, $\lim_{n\rightarrow\infty} \norm{e x_n e}_\infty=0)$.
		
		\item For $1\leq p<\infty$, a sequence $\{x_n\}_{n\geq 1}\subseteq L^p(M,\varphi)$ said to be converges to $x\in L^p(M,\varphi)$ almost surely $($a.s. in short$)$ $($respectively, bilaterally almost surely $($b.a.s. in short$))$ if for every $\epsilon>0$, there exists a projection $e\in M$ and a sequence $\{a_{n,k}\}\subseteq M$ such that  $\varphi(e^\perp)\leq \epsilon$ and 
		\begin{align*}
		& x_n-x=\sum_{k}a_{n,k} D^\frac{1}{p},\quad\text{ and }\lim_{n\rightarrow\infty} \norm{\sum_{k}(a_{n,k} e)}_\infty=0\\
		(\text{respectively,  }& x_n-x=\sum_{k} (D^\frac{1}{2p}a_{n,k} D^\frac{1}{2p}),\quad\text{ and }\lim_{n\rightarrow\infty} \norm{\sum_{k}(ea_{n,k} e)}_\infty=0),
		\end{align*}
		where the two series in the above expression converges in $L^p(M,\varphi)$ and $M$
		respectively.
	\end{enumerate}
\end{defn}

\begin{thm}\label{Thm:AUconvergenceInVNA}
	Let $(M, \varphi) $ be a noncommutative probability space. For each $ i \in I$, suppose   $\alpha_i:M\rightarrow M$  a  liner map satisfying $(4.I)$-$(4.III)$. Then 
	for every $i\in I$ and $x\in M$, 
	the sequence $\left( \frac{A_n^{(i)}(x)}{p_i}\right) $
	$($defined in Eq. \eqref{Eq:AverageOfIthAverage}$)$ converges in  a.u. to an element  $\hat{x}_i$ in $M$ as $n\rightarrow\infty$. Moreover, $T(\hat{x}_1,\ldots, \hat{x}_m)=(\hat{x}_1,\ldots, \hat{x}_m)$.
	Furthermore, the sequence $\{A_n(x)\}$ defined in Eq. \eqref{Eq:AverageOfAverage}, converges a.u. to an element $\hat{x}\in M$.
\end{thm}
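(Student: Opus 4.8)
The plan is to recognize the normalized partial averages $\bigl(A_n^{(j)}(x)/p_j\bigr)_{j\in I}$ as the Cesàro averages of a single operator and then feed that operator into an individual ergodic theorem. By Lemma~\ref{Lem:DiagonalOperatorTn} one has
\[
\frac{1}{n}\sum_{k=0}^{n-1}T^{k}(x,\ldots,x)=\Bigl(\frac{A_n^{(j)}(x)}{p_j}\Bigr)_{j\in I},
\]
where $T$ is the operator on $R=\bigoplus_{i\in I}M$ from Eq.~\eqref{Eq:DirectSumOperator} and $(x,\ldots,x)$ is the diagonal image of $x\in M$. Thus it suffices to prove that the Cesàro averages $\frac1n\sum_{k=0}^{n-1}T^kz$ converge in a.u.\ for every $z\in R$, and then read off the $j$-th coordinate.

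First I would equip $R$ with the faithful normal state $\Phi((x_i)_{i\in I})=\sum_{i\in I}p_i\varphi(x_i)$ and check that $T$ is a normal, completely positive, $\Phi$-subinvariant contraction on the noncommutative probability space $(R,\Phi)$. Normality and complete positivity are inherited from the $\alpha_j$ together with the nonnegativity of the coefficients $p_ip_{ij}/p_j$; the $\|\cdot\|_\infty$-contraction property on $R$ follows from $(4.I)$ and the stationarity identity $\sum_{i}p_ip_{ij}=p_j$, exactly as in the computation of Lemma~\ref{Lem:TisContraction} but carried out in the operator norm; and $\Phi\circ T\le\Phi$ follows from $(4.III)$ together with the row sums $\sum_j p_{ij}=1$, since for $z=(x_i)\ge 0$,
\[
\Phi(Tz)=\sum_{i,j}p_ip_{ij}\,\varphi(\alpha_j(x_i))\le\sum_{i,j}p_ip_{ij}\,\varphi(x_i)=\sum_i p_i\varphi(x_i)=\Phi(z).
\]

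Next I would apply the individual ergodic theorem for the single completely positive $\Phi$-subinvariant contraction $T$ on $(R,\Phi)$, which yields a.u.\ convergence of $\frac1n\sum_{k=0}^{n-1}T^kz\to\hat z$ for every $z\in R$, in particular for $z=(x,\ldots,x)$. Writing $\hat z=(\hat x_1,\ldots,\hat x_m)$ and taking the $j$-th coordinate gives the asserted a.u.\ convergence of $A_n^{(j)}(x)/p_j$ to $\hat x_j$. To identify the limit as $T$-invariant, I would use the telescoping identity
\[
T\Bigl(\frac1n\sum_{k=0}^{n-1}T^kz\Bigr)-\frac1n\sum_{k=0}^{n-1}T^kz=\frac1n\bigl(T^nz-z\bigr),
\]
whose right-hand side tends to $0$ in norm; since $T$ is normal and a.u.\ convergence of a uniformly bounded sequence forces weak-$*$ convergence, passing to the limit yields $T(\hat x_1,\ldots,\hat x_m)=(\hat x_1,\ldots,\hat x_m)$. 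Finally, from $A_n=\sum_{i\in I}A_n^{(i)}$ (Eq.~\eqref{Eq:SumOfCni}) and $A_n^{(i)}(x)=p_i\bigl(A_n^{(i)}(x)/p_i\bigr)$, the sequence $A_n(x)=\sum_{i\in I}p_i\bigl(A_n^{(i)}(x)/p_i\bigr)$ is a finite linear combination of a.u.-convergent sequences, hence converges a.u.\ (intersect the finitely many tail projections) to $\hat x:=\sum_{i\in I}p_i\hat x_i\in M$.

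The main obstacle is the individual ergodic theorem for $T$ in the state (non-tracial) setting under only $(4.I)$–$(4.III)$: the a.u.-convergence machinery developed in the preliminaries is built on $L^0(M,\tau)$ for a \emph{trace}, so it is not directly available for a mere faithful normal state $\Phi$, and without a modular commutation or symmetry hypothesis on the $\alpha_j$ one cannot simply interpolate $T$ across the Haagerup $L^p$-scale. The a.u.\ convergence of the Cesàro averages must therefore be extracted from a maximal ergodic inequality for $T$ (of Yeadon--Junge--Xu type) together with mean convergence on a weak-$*$ dense subset of $R$ and a Banach-principle argument; securing this maximal inequality in the state case is the delicate point of the proof.
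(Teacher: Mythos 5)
Your reduction is exactly the one the paper uses: embed $x$ diagonally in $N=\bigoplus_{i\in I}M$, equip $N$ with the faithful normal state $\psi\bigl((x_i)\bigr)=\sum_i p_i\varphi(x_i)$, observe via Lemma~\ref{Lem:DiagonalOperatorTn} that $\bigl(A_n^{(j)}(x)/p_j\bigr)_j$ is the Cesàro average of the single operator $T$, verify that $T$ is a normal positive contraction with $\psi\circ T\le\psi$ (your computations for the operator-norm contraction via $\sum_i p_ip_{ij}=p_j$ and for subinvariance via $\sum_j p_{ij}=1$ are both correct), and finally sum the coordinates using Eq.~\eqref{Eq:SumOfCni} to get the a.u.\ convergence of $A_n(x)$.

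The one step you leave open --- which you correctly flag as ``the delicate point'' --- is the individual ergodic theorem for a single normal positive contraction $T$ with a subinvariant faithful normal \emph{state} on a von Neumann algebra. This is not something the paper re-derives from a Yeadon--Junge--Xu maximal inequality plus a Banach-principle argument, as you propose; it is precisely K\"ummerer's non-commutative individual ergodic theorem (\cite[Theorem~4.5]{Ku78}), which applies verbatim to $T$ on $(N,\psi)$ under exactly the hypotheses you verified and already delivers a.u.\ convergence of $\frac1n\sum_{k=0}^{n-1}T^k z$ to a $T$-\emph{invariant} limit. In particular your separate telescoping argument for the invariance $T(\hat x_1,\ldots,\hat x_m)=(\hat x_1,\ldots,\hat x_m)$ is unnecessary (though harmless). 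Note also that since the theorem only concerns $x\in M$, the a.u.\ convergence lives entirely at the level of bounded operators and the state, so the worry about the $L^0(M,\tau)$ machinery being unavailable in the non-tracial setting does not arise here. As written, your proposal has a genuine gap at its central step; it becomes a complete proof, coinciding with the paper's, once K\"ummerer's theorem is invoked to close it.
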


\begin{proof}
	Consider the von Neumann algebra $N=\underset{ i \in I }{\oplus}M$ and define a faithful normal state on $N$ by
	\begin{align*}
		\psi(x)=\sum_{i\in I}p_i \varphi(x_i),
	\end{align*}
	where $x=(x_i)_{i\in I}\in N$.
	
	Consider the linear map
	$T: N\rightarrow N$ defined by
	\begin{align}\label{Eq:DirectSumMatrixMap}
		T(x_1,\ldots, x_m)=(y_1\ldots,y_m), \quad\text{ where } y_j=\sum_{i\in I}\frac{p_i p_{ij}}{p_j} \alpha_j (x_i), 
	\end{align}
	where $x=(x_i) \in N$. Note that the map $T$ defined in Eq. \eqref{Eq:DirectSumMatrixMap} is a normal positive contraction satisfy $\psi\circ T\leq \psi$.
	Thus, by a result of K\"{u}mmerer \cite[Theorem 4.5]{Ku78}, it follows that
	for each $x\in N$, the average $\frac{1}{n} \underset{j=0}{ \overset{n-1}{\sum}} T^j(x)$ converges in a.u. to a $T$-invariant element $\hat{x}\in N$. Thus, for $i\in X$, the sequence $\left( \frac{A_n^{(i)}(x)}{p_i}\right)$ defined in Eq. \eqref{Eq:AverageOfIthAverage}, converges in a.u. to an element $\hat{x}_i$. Consequently,
	$\hat{x}=(\hat{x}_1,\ldots, \hat{x}_m)$. Since $T(\hat{x})=\hat{x}$, we have 
	$T(\hat{x}_1,\ldots, \hat{x}_m)=(\hat{x}_1,\ldots, \hat{x}_m)$.
	
For the last part, since $A_n= \underset{i=1}{ \overset{m}{\sum}}  A_n^i$, so, the sequence $\left( A_n(x)\right) $ converges in  a.u. to an element $\hat{x}\in M$. This completes the proof.
\end{proof}

\begin{thm}\label{Thm:MultiParameterErgodicTheoremInLp}
Let $(M, \varphi) $ be noncommutative probability space. For each $i \in I$,  let   $\alpha_i: M\rightarrow M$ be a    liner map satisfying $(4.I)$-$(4.IV)$. Let $1\leq p<\infty$. Then we have the following results.
\begin{enumerate}
	\item 
 For every $i\in X$ and $x\in L^p(M,\varphi)$, 
	the sequence $\left( \frac{A_n^{(i)}(x)}{p_i}\right) $
	converges to an element $\hat{x}_i\in L^p(M,\varphi)$ as $n\rightarrow\infty$ in b.a.s.. 
	Moreover, we have  $T(\hat{x}_1,\ldots, \hat{x}_m)=(\hat{x}_1,\ldots, \hat{x}_m)$.
	
	\item  If $p>2$, the convergence in $(i)$ is a.s. as well.
	\item  For $p>1$, and $x\in L^p(M,\varphi)$, the sequence  $\left( \frac{A_n^{(i)}(x)}{p_i}\right) $ converges in $L^p$-norm.
	
	\item  The results in $(1)$-$(3)$ (Theorem \ref{Thm:MultiParameterErgodicTheoremInLp}) hold for the sequence $\{A_n(x)\}$ as well. 
	
\end{enumerate}

\end{thm}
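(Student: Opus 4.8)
The plan is to realize the normalised component averages $A_n^{(i)}/p_i$ as the Ces\`aro averages of a single operator on a direct-sum von Neumann algebra, and then to transfer the known individual and mean ergodic theorems for state-preserving maps in the Haagerup $L^p$-setting. First I would form $N=\underset{i\in I}{\oplus}M$ equipped with the faithful normal state $\psi(x)=\sum_{i\in I}p_i\varphi(x_i)$ and consider the operator $T$ of Eq.~\eqref{Eq:DirectSumMatrixMap}. Using $(4.\mathrm{I})$ together with $\sum_{i\in I}p_i p_{ij}=p_j$ one gets $\norm{T(x)_j}\le\max_{i}\norm{x_i}$, so $T$ is a $\norm{\cdot}_\infty$-contraction; $(4.\mathrm{II})$ gives positivity and normality; and $(4.\mathrm{III})$ with $\sum_{j\in I}p_{ij}=1$ gives
\begin{align*}
\psi(T(x))=\sum_{i,j\in I}p_i\,p_{ij}\,\varphi(\alpha_j(x_i))\le\sum_{i\in I}p_i\,\varphi(x_i)=\psi(x),\qquad x\in N_+,
\end{align*}
so that $\psi\circ T\le\psi$. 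Finally, since the modular group of $\psi$ on the direct sum is $\sigma^{\psi}_t=\underset{i\in I}{\oplus}\sigma^{\varphi}_t$, condition $(4.\mathrm{IV})$ yields $\sigma^{\psi}_t\circ T=T\circ\sigma^{\psi}_t$ for all $t$.

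These four properties are exactly the hypotheses under which $T$ extends to a positive contraction on the Haagerup space $L^p(N,\psi)$ (through \cite[Theorem 5.1]{HJX10}, as recalled before Definition~\ref{Defn:AUASconvergence}) and for which the individual ergodic theorem is available. I would apply that theorem to $T$ on $L^p(N,\psi)$: the averages $\frac1n\sum_{k=0}^{n-1}T^k$ converge in \emph{b.a.s.} for all $1\le p<\infty$ (the range $1<p<\infty$ by \cite{JX07}, the endpoint $p=1$ by \cite{Bik-Dip-neveu}, \cite{BS23}) and in \emph{a.s.} for $p>2$. By Lemma~\ref{Lem:DiagonalOperatorTn}$(2)$ applied to the diagonal element $\tilde x=(x,\ldots,x)$, these averages are precisely $\big(A_n^{(j)}(x)/p_j\big)_{j\in I}$; and since the coordinate projection $(z_j)_{j}\mapsto z_i$ maps $L^p(N,\psi)$ onto $L^p(M,\varphi)$ and carries a projection $f=(f_j)_j\in N$ with $\psi(f^\perp)$ small to $f_i\in M$ with $\varphi(f_i^\perp)\le\psi(f^\perp)/p_i$ small, it preserves both \emph{b.a.s.} and \emph{a.s.} convergence. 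This delivers $(1)$ and $(2)$ with limits $\hat x_i$, while the relation $T(\hat x_1,\ldots,\hat x_m)=(\hat x_1,\ldots,\hat x_m)$ is inherited from the fact that the ergodic limit of $T$ is $T$-fixed.

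For $(3)$, when $1<p<\infty$ the space $L^p(N,\psi)$ is reflexive and $T$ is a contraction, so the classical mean ergodic theorem gives norm convergence of $\frac1n\sum_{k=0}^{n-1}T^k\tilde x$ to a $T$-fixed point, and projecting onto the $i$-th coordinate yields the $L^p$-norm convergence of $A_n^{(i)}(x)/p_i$. Part $(4)$ is then immediate from $A_n=\sum_{i\in I}A_n^{(i)}$ (Eq.~\eqref{Eq:SumOfCni}): each $A_n^{(i)}=p_i\big(A_n^{(i)}/p_i\big)$ converges in the relevant mode, and a finite sum of \emph{b.a.s.} (respectively \emph{a.s.}, respectively norm) convergent sequences converges in the same mode. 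The principal obstacle I anticipate is not the reduction but confirming that the abstract individual ergodic theorem is genuinely available at the two endpoints for a map that is only state-preserving and modular-commuting (and \emph{not} assumed symmetric, i.e.\ $(4.\mathrm{V})$ is absent): the \emph{b.a.s.} statement at $p=1$ and the \emph{a.s.} statement for $p>2$ must be matched precisely against \cite{JX07}, \cite{Bik-Dip-neveu}, \cite{BS23}, and one must check that the coordinate projection genuinely respects the Haagerup-$L^p$ representation used in Definition~\ref{Defn:AUASconvergence}.
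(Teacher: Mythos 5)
Your proposal follows essentially the same route as the paper: form $N=\underset{i\in I}{\oplus}M$ with $\psi=\sum_i p_i\varphi(\cdot_i)$, verify that $T$ is a normal completely positive contraction with $\psi\circ T\le\psi$ commuting with $\sigma_t^\psi=\underset{i\in I}{\oplus}\sigma_t^\varphi$, apply the Junge--Xu individual ergodic theorems (for $p>1$ their Corollary 7.12, for $p=1$ the paper instead invokes the remark on p.~429 of \cite{JX07} rather than \cite{Bik-Dip-neveu}, \cite{BS23}), identify $L^p(N,\psi)$ with $\underset{i\in I}{\oplus}L^p(M,\varphi)$, and use the mean ergodic theorem for power-bounded operators on reflexive spaces (Lorch) for part $(3)$. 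The argument is correct and matches the paper's proof.
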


\begin{proof}
of $(1)-(2)$.	Let $N$, $\psi$ and $T$ be as in the proof of Theorem \ref{Thm:AUconvergenceInVNA}. Observe that 
	$T$ is completely positive such that $\psi\circ T\leq \psi$. Note that $\sigma_t^\psi= \underset{i\in I}{\oplus} \sigma_t^\varphi$, and hence $T\circ \sigma_t^\psi=\sigma_t^\psi\circ T$, $t\in\mathbb{R}$. Now for $p>1$, apply a result of Junge-Xu \cite[Corollary 7.12]{JX07} and for $p=1$, apply  a remark of Junge-Xu \cite[pp 429]{JX07} to the average $\frac{1}{n} \underset{  j=0}{\overset{n-1}{\sum}}T^j $ and then for all $  x \in L^p(N,\psi)$,  we obtain that the averaging sequence $\left(\frac{1}{n} \underset{  j=0}{\overset{n-1}{\sum}} T^j(x)\right)$ converges to a $T$-invariant element $\hat{x}$ in $b.a.s.$. Moreover, if $p>2$, the above convergence is in a.s.

	It is known that $L^p(N,\psi)$ is identified with $\underset{i\in I}{\oplus}  L^p(M,\varphi)$ with norm defined as  
	$$\norm{(x_1,x_2,\ldots,x_n)}=(\sum_{i\in I}p_i\norm{x_i}_{p}^p)^{1/p},$$
	where $(x_1,x_2,\ldots,x_n)\in  \underset{i\in I}{\oplus} L^p(M,\varphi)$ \cite[Remark 30]{Te81}. For $x\in L^p(M,\varphi)$, we consider  $(x,\ldots,x)\in L^p(N,\psi)$. Then  apply  the discussion in the above paragraph and  it follows that if $x\in L^p(M,\varphi)$, 
	then the sequence $\left( \frac{A_n^{(i)}(x)}{p_i}\right) $, defined in Eq. \eqref{Eq:AverageOfIthAverage}, converges b.a.s. to an 
	element $\hat{x}_i\in L^p(M,\varphi)$. Moreover, if $p>2$, the above convergence is in a.s. Furthermore, we have $$T(\hat{x}_1,\ldots, \hat{x}_m)=(\hat{x}_1,\ldots, \hat{x}_m).$$

	\emph{ Proof. of $(3)$}. For $p>1$, apply Lorch's erogodic theorem $($see \cite[Theorem 2]{Lo39}, \cite[Theorem 1.2, pp 73]{Kr85}$)$ to $T$ $(T$ is being a power bounded operator on a reflexive Banach space$)$ and obtain  the convergence of $\left( \frac{A_n^{(i)}(x)}{p_i}\right) $ in $L^p(M,\varphi)$ in norm. 
	
The proof of part $(3)$ follows as we have $A_n=\underset{i \in I}{ \sum} A_n^i$. This completes the proof.
	%
	%
	%
	%
\end{proof}

\begin{thm}\label{Thm:MultiParameterMaximalInequalityInLp}
	Let $(M, \varphi) $ be noncommutative probability space. For each $ i \in I$, let $\alpha_i:M\rightarrow M$,  be a liner map satisfying $(4.I)$-$(4.V)$ and suppose  $1<p<\infty$. Then there exist constants $C_p^{(i)}>0$, $i\in I$, and $C_p>0$ $($depending only on $p)$ such that 
	\begin{align*}
		\norm{{\sup}_{n}^{+} A_n^{(i)}(x)}_{L^p(M,\ell_\infty)}&\leq C_p^{(i)} \norm{x}_p, \quad x\in L^p(M,\varphi)\\
		\norm{{\sup}_{n}^{+} A_n(x)}_{L^p(M,\ell_\infty)}&\leq C_p \norm{x}_p, \quad x\in L^p(M,\varphi).
	\end{align*}
\end{thm}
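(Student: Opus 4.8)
The plan is to reduce both maximal inequalities to a single maximal ergodic inequality for the Ces\`aro averages of the amplified operator $T$, exactly along the lines of the proofs of Theorems \ref{Thm:AUconvergenceInVNA} and \ref{Thm:MultiParameterErgodicTheoremInLp}. Set $N=\underset{i\in I}{\oplus}M$, equip it with the faithful normal state $\psi(x)=\sum_{i\in I}p_i\varphi(x_i)$, and let $T:N\to N$ be the operator of Eq. \eqref{Eq:DirectSumMatrixMap}. Conditions $(4.\mathrm{I})$--$(4.\mathrm{IV})$ already guarantee, as recorded in those proofs, that $T$ is a normal completely positive contraction with $\psi\circ T\le\psi$ and $T\circ\sigma_t^\psi=\sigma_t^\psi\circ T$. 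The new ingredient is $(4.\mathrm{V})$, which I would use to compute the $\psi$-adjoint of $T$: the $\varphi$-symmetry of each $\alpha_j$ gives, for $x,y\in N$,
\begin{align*}
	\psi(y^*T(x))=\sum_{i,j\in I}p_ip_{ij}\,\varphi(y_j^*\alpha_j(x_i))=\sum_{i,j\in I}p_ip_{ij}\,\varphi(\alpha_j(y_j)^*x_i),
\end{align*}
whence $T^*(y)_i=\sum_{j\in I}p_{ij}\,\alpha_j(y_j)$. This $T^*$ is again completely positive, commutes with $\sigma_t^\psi$, and satisfies $\psi\circ T^*\le\psi$ (using $\sum_{i}p_ip_{ij}=p_j$ and $\varphi\circ\alpha_j\le\varphi$). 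Thus $(4.\mathrm{V})$ promotes $T$ to a Dunford--Schwartz operator on the Haagerup space $L^p(N,\psi)$, which is precisely the hypothesis needed for the strong maximal ergodic inequality --- in contrast with the mere convergence statement of Theorem \ref{Thm:MultiParameterErgodicTheoremInLp}, for which $(4.\mathrm{V})$ was not required.

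With $T$ a Dunford--Schwartz operator commuting with the modular group, I would invoke the Junge--Xu maximal ergodic inequality \cite[Cor. 7.12]{JX07} for $\frac1n\sum_{k=0}^{n-1}T^k$ on $L^p(N,\psi)$, $1<p<\infty$: there is a constant $C_p>0$ with
\begin{align*}
	\norm{{\sup_n}^{+}\,\tfrac1n\sum_{k=0}^{n-1}T^k X}_{L^p(N,\ell_\infty)}\le C_p\,\norm{X}_{L^p(N,\psi)},\qquad X\in L^p(N,\psi)_+ .
\end{align*}
For $x\in L^p(M,\varphi)_+$ I would specialize to the diagonal vector $\tilde x=(x,\ldots,x)$, for which $\norm{\tilde x}_{L^p(N,\psi)}^p=\sum_{i\in I}p_i\norm{x}_p^p=\norm{x}_p^p$, and apply Lemma \ref{Lem:DiagonalOperatorTn}$(2)$ to identify $\frac1n\sum_{k=0}^{n-1}T^k\tilde x=\big(A_n^{(i)}(x)/p_i\big)_{i\in I}$.

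The passage back to the individual coordinates is then a direct computation with the order characterization of the $L^p(\cdot,\ell_\infty)$-norm for positive sequences: if $A\in L^p(N,\psi)_+$ dominates $\frac1n\sum_{k=0}^{n-1}T^k\tilde x$ for every $n$, then its $i$-th component $A_i\in L^p(M,\varphi)_+$ dominates $A_n^{(i)}(x)/p_i$, and $\norm{A_i}_p\le p_i^{-1/p}\norm{A}_{L^p(N,\psi)}$ since $\norm{A}_{L^p(N,\psi)}^p=\sum_{k\in I}p_k\norm{A_k}_p^p\ge p_i\norm{A_i}_p^p$. Taking the infimum over such $A$ and using homogeneity of the norm gives
\begin{align*}
	\norm{{\sup_n}^{+}A_n^{(i)}(x)}_{L^p(M,\ell_\infty)}\le p_i^{1/p'}\,C_p\,\norm{x}_p=:C_p^{(i)}\,\norm{x}_p .
\end{align*}
Summing over $i\in I$, the relation $A_n=\sum_{i\in I}A_n^{(i)}$ of Eq. \eqref{Eq:SumOfCni} and the triangle inequality for the $L^p(M,\ell_\infty)$-norm yield the second inequality with $C_p=\sum_{i\in I}C_p^{(i)}$. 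Finally, positivity of all the maps lets me reduce the general case to $x\in L^p(M,\varphi)_+$ by the standard four-term decomposition, as in the proof of Lemma \ref{Lem:BUEMonPositive}.

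I expect the main obstacle to be the verification in the first paragraph that $(4.\mathrm{V})$ genuinely places $T$ in the class covered by the Junge--Xu strong maximal inequality: one must confirm that the computed $\psi$-adjoint $T^*$ is simultaneously completely positive, sub-$\psi$-invariant, and modular-covariant, so that $T$ is a bona fide Dunford--Schwartz operator on the Haagerup $L^p(N,\psi)$ rather than merely an $L^p$-contraction. The coordinate-projection estimate and the summation over $i\in I$ are then routine once the amplified maximal inequality is in hand.
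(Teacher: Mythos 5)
Your proposal is correct and follows essentially the same route as the paper: amplify to $N=\underset{i\in I}{\oplus}M$ with the state $\psi$, verify that the operator $T$ of Eq.~\eqref{Eq:DirectSumMatrixMap} satisfies the Junge--Xu conditions (7.I)--(7.IV), apply their maximal ergodic inequality to $\frac1n\sum_{k=0}^{n-1}T^k$, and project back to coordinates via Lemma~\ref{Lem:DiagonalOperatorTn}; you merely spell out the adjoint computation and the coordinate estimates that the paper leaves implicit. One small correction: the maximal ergodic inequality you need is \cite[Theorem 7.4]{JX07} (as the paper cites), not \cite[Cor.~7.12]{JX07}, which is the associated individual ergodic theorem.
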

\begin{proof}
	Let $N$, $\psi$ and $T$ be as in the proof of Theorem \ref{Thm:AUconvergenceInVNA}. Then $T$ satifies the conditions (7.I)-(7.IV) of Junge-Xu \cite{JX07}. Now apply maximal ergodic inequality of Junge-Xu \cite[Theorem 7.4]{JX07} to the average of the operator $T$, the result follows.
\end{proof}

Now we will establish the invariance of the limit that  obtained in  Theorem \ref{Thm:MultiParameterErgodicTheoremInLp} . We  follow similar  approach as in the works of \cite{GKS07} and \cite{Bu00}. We point out that the   main technique was first introduced by \cite{Gui69}. Before stating the main result of this section, let us recall an elementary result that is commonly known.

\begin{lem}\label{Lem:StrictlyConvex}
	Let $ X$ be a strictly convex normed linear space. Then the following hold:
	\begin{enumerate}
		\item  If $\alpha: X\rightarrow  X$ is a contraction, and $x,y\in  X$ such that  $\norm{x}=\norm{y}=\norm{\alpha(x)}=\norm{\alpha(y)}$ and $\alpha(x)=\alpha(y)$, then $x=y$.
	
	\item Let $x, x_1,\ldots, x_n\in  X$ be such that $x=\underset{i =1}{ \overset{n}{\sum}} c_i x_i$ for some $c_1,\ldots, c_n>0$ such that $\underset{i =1}{ \overset{n}{\sum}}  c_i=1$. If $\norm{x}=\norm{x_1}=\cdots=\norm{x_n}$, then $x=x_1=\cdots=x_n$.	
	\end{enumerate}
\end{lem}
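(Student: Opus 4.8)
The plan is to reduce both statements to the standard geometric characterization of strict convexity: if $\norm{u}=\norm{v}=1$ and $\lambda\in(0,1)$ satisfy $\norm{\lambda u+(1-\lambda)v}=1$, then $u=v$; equivalently, the open segment joining two distinct points of the unit sphere lies inside the open unit ball. I would recall this characterization at the outset and then isolate, in each part, the equality case of the triangle inequality.

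For part (1), I would first dispose of the trivial case $\norm{x}=0$, which forces $x=y=0$, and otherwise set $r=\norm{x}>0$. Using linearity of $\alpha$ together with the hypothesis $\alpha(x)=\alpha(y)$, I compute $\alpha\big(\tfrac{x+y}{2}\big)=\tfrac{\alpha(x)+\alpha(y)}{2}=\alpha(x)$, so that $\norm{\alpha(\tfrac{x+y}{2})}=\norm{\alpha(x)}=r$. Since $\alpha$ is a contraction, the triangle inequality gives
\[
r=\norm{\alpha\big(\tfrac{x+y}{2}\big)}\le \norm{\tfrac{x+y}{2}}\le \tfrac{1}{2}\big(\norm{x}+\norm{y}\big)=r,
\]
forcing $\norm{\tfrac{x+y}{2}}=r=\norm{x}=\norm{y}$. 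Normalizing by $r$ and applying strict convexity then yields $x=y$.

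For part (2), I would again first treat the degenerate case where the common value $r=\norm{x}$ is zero (all vectors vanish and there is nothing to prove) and the case $n=1$ (immediate), and then assume $r>0$ and $n\ge 2$. Fixing an index $i$, I set $z_i=\tfrac{1}{1-c_i}\sum_{j\ne i}c_j x_j$ (legitimate since $0<c_i<1$), so that $x=c_i x_i+(1-c_i)z_i$ is a genuine convex combination. The triangle inequality gives $\norm{z_i}\le r$, and then
\[
r=\norm{x}=\norm{c_i x_i+(1-c_i)z_i}\le c_i\norm{x_i}+(1-c_i)\norm{z_i}\le r,
\]
so every inequality is an equality; in particular $\norm{z_i}=r$ and $x$ is a nontrivial convex combination of the two sphere points $x_i$ and $z_i$, both of norm $r$. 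Strict convexity then forces $x_i=z_i$, whence $x=x_i$. Since $i$ was arbitrary, $x=x_1=\cdots=x_n$.

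I do not anticipate a genuine obstacle: both statements are routine consequences of strict convexity once the equality case of the triangle inequality is isolated. The only points requiring care are the linearity of $\alpha$ in part (1) (which lets the midpoint pass through $\alpha$, and is the standing convention for the operators in this paper) and the bookkeeping of the degenerate norm-zero and $n=1$ cases; the substantive content is entirely captured by the two-point strict-convexity characterization recalled at the start.
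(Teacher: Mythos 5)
Your proposal is correct and, for part (1), follows essentially the same argument as the paper: the paper runs the identical computation $\norm{x}=\norm{\alpha(x)}=\norm{\alpha(\tfrac{x+y}{2})}\leq\norm{\tfrac{x+y}{2}}$ by contradiction, whereas you phrase it as a squeeze forcing $\norm{\tfrac{x+y}{2}}=\norm{x}=\norm{y}$ and then invoke strict convexity. For part (2) the paper simply declares the fact standard and omits the proof, and your argument (peeling off $x_i$ against the renormalized remainder $z_i$ and using the equality case of the triangle inequality plus strict convexity) is a correct instance of exactly that standard argument.
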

\begin{proof} \emph { of $(1)$.} We prove this result by contradiction.	Without any  loss of generality, one can assume that $\norm{x}=\norm{y}=1$. Suppose $x\neq y$. Since $ X$ is strictly convex,  we have $\norm{\frac{x+y}{2}}<1$. Note that
\begin{align*}
	1=\norm{x}=\norm{\alpha(x)}
	&=\norm{\frac{\alpha(x)+\alpha(y)}{2}}\quad (\text{since }\alpha(x)=\alpha(y))\\
	&=\norm{\alpha(\frac{x+y}{2})}
	\leq \norm{\frac{x+y}{2}}
	<1.
\end{align*}
This is a contradiction to the hypothesis. Thus, $x=y$. \\

\emph{ Proof. of $(2)$. } The proof of this fact is standard and well-known. Thus, we omit the details. This completes the proof.
\end{proof}

\begin{thm}
Let $(M, \varphi) $ be noncommutative probability space. For each $i \in I$,  let   $\alpha_i: M\rightarrow M$ be a  liner map satisfying $(4.I)$-$(4.IV)$  and suppose  $2\leq p<\infty$. For $x\in L^p(M,\varphi)$, let $\hat{x}_i\in L^p(M,\varphi)$, $i\in I$, be as in Theorem \ref{Thm:MultiParameterErgodicTheoremInLp}, that satisfies the condition
\begin{align*}
	T(\hat{x}_1,\ldots, \hat{x}_m)=(\hat{x}_1,\ldots, \hat{x}_m).
\end{align*} 
If $\mu$ is a strictly irreducible $\sigma_m$-invariant Markov measure on $I$, then
 $$\hat{x}_1=\cdots=\hat{x}_m:=\hat{x} \text{ and }\alpha_i(\hat{x})=\hat{x}, \text{ for all }i\in I.$$
\end{thm}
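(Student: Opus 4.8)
The plan is to exploit the strict convexity of the $L^p$-direct sum together with the fixed-point equation $T(\hat x_1,\dots,\hat x_m)=(\hat x_1,\dots,\hat x_m)$. Writing $Q_{ji}=\frac{p_ip_{ij}}{p_j}$, the $j$-th coordinate of that equation reads $\hat x_j=\alpha_j(z_j)$, where $z_j=\sum_{i\in I}Q_{ji}\hat x_i$. Since $(p_i)$ is stationary, $\sum_i Q_{ji}=1$ and $\sum_j p_jQ_{ji}=p_i$, so each $z_j$ is a genuine convex combination of the $\hat x_i$. The ambient space $\oplus_{i\in I}L^p(M,\varphi)$ with $\norm{(x_i)}_p^p=\sum_i p_i\norm{x_i}_p^p$ is strictly convex for $p>1$, each $\alpha_j$ is an $L^p(M,\varphi)$-contraction, and $T$ is a positive contraction (as in Lemma \ref{Lem:TisContraction}); these facts drive the whole argument.

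First I would pin down the norms. Using that $T$ fixes $\hat x$, that $\hat x_j=\alpha_j(z_j)$ with $\alpha_j$ a contraction, and the convexity of $t\mapsto t^p$, I would run the chain
\begin{align*}
\norm{\hat x}_p^p=\sum_j p_j\norm{\alpha_j(z_j)}_p^p\le \sum_j p_j\norm{z_j}_p^p\le \sum_j p_j\sum_i Q_{ji}\norm{\hat x_i}_p^p=\sum_i p_i\norm{\hat x_i}_p^p=\norm{\hat x}_p^p.
\end{align*}
Since every $p_j>0$, equality throughout forces, for each $j$, both $\norm{\alpha_j(z_j)}_p=\norm{z_j}_p$ and $\norm{z_j}_p^p=\sum_i Q_{ji}\norm{\hat x_i}_p^p$. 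Setting $a_i=\norm{\hat x_i}_p^p$ and combining these with $\norm{\hat x_j}_p=\norm{\alpha_j(z_j)}_p$ gives $a_j=\sum_i Q_{ji}a_i$, i.e.\ $a=Qa$. As $Q$ is stochastic and, by strict irreducibility (which I read as $p_{ij}>0$ for all $i,j$, equivalently full support of $\mu$ on $I^{\N}$), one has $Q_{ji}>0$ for all $i,j$, the Perron--Frobenius theorem forces $a$ to be constant; hence $\norm{\hat x_i}_p=\norm{z_j}_p=:c$ for all $i,j$.

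Next I would invoke strict convexity through Lemma \ref{Lem:StrictlyConvex}(2). For each fixed $j$, $z_j=\sum_i Q_{ji}\hat x_i$ is a convex combination with positive coefficients summing to $1$ of elements all of norm $c=\norm{z_j}_p$, so the lemma yields $z_j=\hat x_i$ for every $i$ with $Q_{ji}>0$. Because $Q_{ji}>0$ for all $i,j$, this reads $z_j=\hat x_i$ for all $i$; thus all the $\hat x_i$ coincide, say with common value $\hat x$, and $z_j=\hat x$ for every $j$. The fixed-point equation then collapses to $\hat x=\hat x_j=\alpha_j(z_j)=\alpha_j(\hat x)$, giving $\alpha_i(\hat x)=\hat x$ for all $i\in I$, as claimed.

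The main obstacle is not the algebra but the two places where soft estimates must be upgraded to rigidity: turning the contraction/convexity inequalities into exact equalities (which needs $p_j>0$ and the strict convexity of $t\mapsto t^p$, $p>1$), and then extracting the identity $z_j=\hat x_i$ from a norm-saturated convex combination. It is equally essential that strict irreducibility be taken as $p_{ij}>0$ for all $i,j$: under mere irreducibility the same reasoning only delivers the weaker relations $\hat x_j=\alpha_j(\hat x_i)$ along the edges where $p_{ij}>0$, and these need not force the $\hat x_i$ to agree (for instance for a deterministic cycle), so full positivity of the transition matrix is precisely what closes the argument.
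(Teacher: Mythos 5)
Your norm-rigidity computation is sound and in fact a little slicker than the paper's (the paper takes a maximizing index $j$ and squeezes, whereas you sum against the stationary distribution and read off $a=Qa$; either way one only needs irreducibility of $P$ to conclude that all the $\norm{\hat x_i}_p$, and the $\norm{\alpha_j(\hat x_i)}_p$ along edges with $p_{ij}>0$, coincide). The genuine gap is in the last step: you read ``strictly irreducible'' as $p_{ij}>0$ for all $i,j$, and your conclusion $z_j=\hat x_i$ for all $i$ via Lemma \ref{Lem:StrictlyConvex}(2) uses this full positivity in an essential way. That is not the intended hypothesis. As the paper's proof makes explicit, strict irreducibility means that the matrix $PP^t$ is irreducible, i.e.\ for each pair $i,j$ there is some $n$ with $((PP^t)^n)_{ij}>0$; this is strictly weaker than full positivity of $P$ (and in the companion free-group setting, where $p_{i,-i}=0$, full positivity simply fails, so your reading would exclude the motivating examples). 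Your closing remark that ``full positivity of the transition matrix is precisely what closes the argument'' is therefore incorrect: the argument closes under the weaker $PP^t$ condition, but by a mechanism you have omitted.

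That missing mechanism is part (1) of Lemma \ref{Lem:StrictlyConvex}, which you never invoke. When $(PP^t)_{ij}>0$ one finds a common successor $k$ with $p_{ik},p_{jk}>0$; writing $\hat x_k=\sum_m c_m\alpha_k(\hat x_m)$ with $c_i,c_j>0$ and all terms of equal norm, Lemma \ref{Lem:StrictlyConvex}(2) gives $\alpha_k(\hat x_i)=\alpha_k(\hat x_j)=\hat x_k$, and then the equal-norm cancellation property of a contraction on a strictly convex space (Lemma \ref{Lem:StrictlyConvex}(1)) lets you strip off $\alpha_k$ and conclude $\hat x_i=\hat x_j$. Iterating along powers of $PP^t$ identifies all the $\hat x_i$, after which the fixed-point equation collapses to $\alpha_j(\hat x)=\hat x$ exactly as you say. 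To repair your proof, replace the ``$Q_{ji}>0$ for all $i,j$'' step by this two-stage argument; as written, your proof establishes the theorem only under a hypothesis substantially stronger than the one stated.
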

\begin{proof}
First suppose $\hat{x}_i\in L^p(M,\varphi)$, $i\in I$, be such that $T(\hat{x}_1,\ldots, \hat{x}_m)=(\hat{x}_1,\ldots, \hat{x}_m)$. In this case, we prove that there exists $l>0$ such that
$\norm{\hat{x}_j}_p=l$, for all $j\in I$, and $\norm{\alpha_j (\hat{x}_i)}_p=l$ if $p_{ij}>0$,  for $i,j\in I$. \\
Indeed, by Eq. \eqref{Eq:DirectSumOperator} and from the hypothesis, we have
\begin{align}\label{Eq:LimitEquality}
	\hat{x}_j=\sum_{i\in I}\frac{p_i p_{ij}}{p_j} \alpha_j (\hat{x}_i),\quad j\in I.
\end{align}
Let $j\in I$ be such that $\norm{\hat{x}_j}_p=\max_{i\in I}	\norm{\hat{x}_i}_p$. By Eq. \eqref{Eq:LimitEquality}, we have
\begin{align*}
	\norm{\hat{x}_j}_p
	&\leq\sum_{i\in I}\frac{p_i p_{ij}}{p_j} \norm{\alpha_j (\hat{x}_i)}_p\\
	&\leq \sum_{i\in I}\frac{p_i p_{ij}}{p_j} \norm{ \hat{x}_i}_p \quad(\text{since }\alpha_j\text{ are contraction})\\
	&\leq \norm{\hat{x}_j}_p \sum_{i\in I}\frac{p_i p_{ij}}{p_j}\quad(\text{since }\norm{\hat{x}_j}_p\geq	\norm{\hat{x}_i}_p,\forall i\in I)\\
	&=\norm{\hat{x}_j}_p.
\end{align*}
Thus, equality in the above equation is everywhere. Hence,
\begin{align*}
	\norm{\hat{x}_j}_p=	\norm{ (\hat{x}_i)}_p=\norm{\alpha_j (\hat{x}_i)}_p, \quad \text{ for all }i\in I \text{ such that }\ p_{ij}\neq 0.
\end{align*}	

Similarly, by Eq. \eqref{Eq:LimitEquality}, one has
\begin{align}\label{Eq:LimitEquality2}
	\hat{x}_j
    \nonumber &=\sum_{i\in I}\frac{p_i p_{ij}}{p_j} \alpha_j (\hat{x}_i)\\
	\nonumber &=\sum_{i\in I}\frac{p_i p_{ij}}{p_j} \alpha_j \left(\sum_{k\in I}\frac{p_k p_{ki}}{p_i}\alpha_i(\hat{x}_k)\right)\\
	 &=\sum_{k\in I}\frac{p_{k}}{p_j} \left(\sum_{i\in I}p_{ki}p_{ij}
	 \alpha_j(\alpha_i(\hat{x}_k))\right).
\end{align}
Observe that $(p_1,\ldots, p_m)P^2=(p_1,\ldots, p_m)$. Thus, repeat the same argument as presented in the previous paragraph and obtain  $\norm{\hat{x}_j}_p=\norm{\hat{x}_i}_p$, if 
$p_{ij}^{(2)}\neq 0$. Since $P$ is irreducible, for each $i\in I$, there exists $n\in \mathbb{N}$ such that $p_{ij}^{(n)}\neq 0$. Now apply the same argument as before, we have
\begin{align*}
	\norm{\hat{x}_j}_p=\norm{\hat{x}_i}_p, \forall i,j\in I.\\
\end{align*}

Now  we show that if $(PP^t)_{ij}>0$, then $\hat{x}_i=\hat{x}_j$.
Indeed, since $(PP^t)_{ij}>0$, there exists $k\in I$ such that $p_{ik}>0$ and $p_{jk}>0$. Note that $\hat{x}_k$ is the convex combinations of $\alpha_k (\hat{x}_m)$, i.e., 
\begin{align*}
\hat{x}_k=\sum_{m\in I}c_m \alpha_k (\hat{x}_m),
\end{align*}
where $c_m=\frac{p_m p_{mk}}{p_k}\geq 0$, $m\in I$.
Since $c_i,c_j>0$ and $L^p(M,\varphi)$ is strictly convex for $2\leq p<\infty$, by Lemma \ref{Lem:StrictlyConvex}$(ii)$, one has
\begin{align*}
	\hat{x}_k=\alpha_k(\hat{x}_i)=\alpha_k(\hat{x}_j).
\end{align*}
Moreover, by first part
\begin{align*}
	\norm{\hat{x}_k}=\norm{\alpha_k(\hat{x}_i)}=\norm{\alpha_k(\hat{x}_j)}=\norm{\hat{x}_i}=\norm{\hat{x}_j}
\end{align*}
Now apply Lemma \ref{Lem:StrictlyConvex}$(i)$ and obtain  $	\hat{x}_i=\hat{x}_j$. This proves the claim.\\
 Finally,  we show that
\begin{align*}
	\hat{x}_1=\cdots=\hat{x}_m.
\end{align*}
Indeed, since $\mu$ is strictly irreducible, there exists $n\in\mathbb{N}$ such that $((PP^t)^n)_{ij}>0$. By applying the similar argument as previous case, we have $\hat{x}_1=\cdots=\hat{x}_m$. This completes the proof.
\end{proof}



\begin{thebibliography}{CAGPP20}
	

\bibitem[${\text{AAB}}^+$10]{anantharaman2010ergodic}
Claire Anantharaman, Jean-Philippe Anker, Martine Babillot, Aline Bonami, Bruno
Demange, Sandrine Grellier, Fran\c{c}ois Havard, Philippe Jaming, Emmanuel
Lesigne, Patrick Maheux, Jean-Pierre Otal, Barbara Schapira, and Jean-Pierre
Schreiber, \emph{Th\'{e}or\`emes ergodiques pour les actions de groupes},
Monographies de L'Enseignement Math\'{e}matique [Monographs of L'Enseignement
Math\'{e}matique], vol.~41, L'Enseignement Math\'{e}matique, Geneva, 2010,
With a foreword in English by Amos Nevo. \MR{2643350}








\bibitem[AD06]{Ad06}
Claire Anantharaman-Delaroche, \emph{On ergodic theorems for free group actions
	on noncommutative spaces}, Probab. Theory Related Fields \textbf{135} (2006),
no.~4, 520--546. \MR{2240699}

\bibitem[AK63]{AK63}
V.~I. Arno{$l^\prime$}d and A.~L. Krylov, \emph{Uniform distribution of points
	on a sphere and certain ergodic properties of solutions of linear ordinary
	differential equations in a complex domain}, Dokl. Akad. Nauk SSSR
\textbf{148} (1963), 9--12. \MR{150374}

\bibitem[BL76]{BL76}
J\"oran Bergh and J\"orgen L\"ofstr\"om, \emph{Interpolation spaces. {A}n
	introduction}, Grundlehren der Mathematischen Wissenschaften, vol. No. 223,
Springer-Verlag, Berlin-New York, 1976. \MR{482275}

\bibitem[BS23a]{BS23}
Panchugopal Bikram and Diptesh Saha, \emph{On the non-commutative ergodic
	theorem for semigroup and free group actions}, arXiv:2307.00542 (2023).

\bibitem[BS23b]{Bik-Dip-neveu}
Panchugopal Bikram and Diptesh Saha, \emph{On the non-commutative
	{$\text{N}$}eveu decomposition and ergodic theorems for amenable group
	action}, J. Funct. Anal. \textbf{284} (2023), no.~1, Paper No. 109706, 45.
\MR{4500241}

\bibitem[BS24a]{PgD24maximal-inequality}
Panchugopal Bikram and Diptesh Saha, \emph{Maximal inequality associated to
	doubling condition for state preserving actions}, arXiv:2407.05642 (2024).

\bibitem[BS24b]{pg-ds-ergo-orlicz}
\bysame, \emph{Weighted subsequential ergodic theorems on orlicz space},
Operators and Matrices \textbf{18} (2024), no.~3, 735--751.

\bibitem[Buf00]{Bu00}
A.~I. Bufetov, \emph{Operator ergodic theorems for actions of free semigroups
	and groups}, Funktsional. Anal. i Prilozhen. \textbf{34} (2000), no.~4,
1--17, 96. \MR{1818281}

\bibitem[Buf02]{Bu02}
Alexander~I Bufetov, \emph{Convergence of spherical averages for actions of
	free groups}, Annals of mathematics \textbf{155} (2002), no.~3, 929--944.

\bibitem[CAGPP20]{CGP20}
Jos\'e{}~M. Conde-Alonso, Adri\'an~M. Gonz\'alez-P\'erez, and Javier Parcet,
\emph{Noncommutative strong maximals and almost uniform convergence in
	several directions}, Forum Math. Sigma \textbf{8} (2020), Paper No. e57, 39.
\MR{4179649}

\bibitem[Cal53]{calderon1953general}
Alberto~P Calderon, \emph{A general ergodic theorem}, Annals of Mathematics
(1953), 182--191.

\bibitem[CDN78]{Conze1978}
J.-P. Conze and N.~Dang-Ngoc, \emph{Ergodic theorems for noncommutative
	dynamical systems}, Inventiones Mathematicae \textbf{46} (1978), no.~1,
1--15. \MR{500185}

\bibitem[CL06]{CL06}
V.~I. Chilin and S.~N. Litvinov, \emph{Uniform equicontinuity for sequences of
	homomorphisms into the ring of measurable operators}, Methods Funct. Anal.
Topology \textbf{12} (2006), no.~2, 124--130. \MR{2238034}

\bibitem[CL17]{CL17}
Vladimir Chilin and Semyon Litvinov, \emph{Individual ergodic theorems in
	noncommutative {O}rlicz spaces}, Positivity \textbf{21} (2017), no.~1,
49--59. \MR{3612983}

\bibitem[CLS05]{CLS05}
Vladimir Chilin, Semyon Litvinov, and Adam Skalski, \emph{A few remarks in
	non-commutative ergodic theory}, J. Operator Theory \textbf{53} (2005),
no.~2, 331--350. \MR{2153152}

\bibitem[CW22]{cadilhac2022noncommutative}
L{\'e}onard Cadilhac and Simeng Wang, \emph{Noncommutative maximal ergodic
	inequalities for amenable groups}, arXiv preprint arXiv:2206.12228 (2022).

\bibitem[DDdP92]{DD92}
Peter~G. Dodds, Theresa~K. Dodds, and Ben de~Pagter, \emph{Fully symmetric
	operator spaces}, Integral Equations Operator Theory \textbf{15} (1992),
no.~6, 942--972. \MR{1188788}

\bibitem[FK86]{FK86}
Thierry Fack and Hideki Kosaki, \emph{Generalized {$s$}-numbers of
	{$\tau$}-measurable operators}, Pacific J. Math. \textbf{123} (1986), no.~2,
269--300. \MR{840845}

\bibitem[GKS07]{GKS07}
G.~Ya. Grabarnik, A.~A. Katz, and L.~A. Shwartz, \emph{On non-commutative
	ergodic type theorems for free finitely generated semigroups}, Vladikavkaz.
Mat. Zh. \textbf{9} (2007), no.~1, 38--47. \MR{2434622}

\bibitem[GL00]{GL00}
Michael Goldstein and Semyon Litvinov, \emph{Banach principle in the space of
	{$\tau$}-measurable operators}, Studia Math. \textbf{143} (2000), no.~1,
33--41. \MR{1814479}

\bibitem[GL20]{GL20}
Stanis\l~aw Goldstein and Louis Labuschagne, \emph{Notes on noncommutative
	{$L^p$} and {O}rlicz spaces}, Wydawnictwo Uniwersytetu \L\'odzkiego,
\L\'od\'z, 2020. \MR{4507021}

\bibitem[Gri86]{grigorchuk1986individual}
Rostislav~I Grigorchuk, \emph{Individual ergodic theorem for actions of free
	groups}, Proceedings of the Tambov workshop in the theory of functions, 1986,
pp.~3--15.

\bibitem[Gui69]{Gui69}
Yves Guivarc'h, \emph{G\'en\'eralisation d'un th\'eor\`eme de von {N}eumann},
C. R. Acad. Sci. Paris S\'er. A-B \textbf{268} (1969), A1020--A1023.
\MR{251191}

\bibitem[Haa75]{Ha75}
Uffe Haagerup, \emph{The standard form of von {N}eumann algebras}, Math. Scand.
\textbf{37} (1975), no.~2, 271--283. \MR{407615}

\bibitem[Haa79]{Ha77}
\bysame, \emph{{$L\sp{p}$}-spaces associated with an arbitrary von {N}eumann
	algebra}, Alg\`ebres d'op\'erateurs et leurs applications en physique
math\'ematique ({P}roc. {C}olloq., {M}arseille, 1977), Colloq. Internat.
CNRS, vol. 274, CNRS, Paris, 1979, pp.~175--184. \MR{560633}

\bibitem[Hia21]{Hi21}
Fumio Hiai, \emph{Lectures on selected topics in von {N}eumann algebras}, EMS
Series of Lectures in Mathematics, EMS Press, Berlin, [2021] \copyright 2021.
\MR{4331436}

\bibitem[HJX10]{HJX10}
Uffe Haagerup, Marius Junge, and Quanhua Xu, \emph{A reduction method for
	noncommutative {$L_p$}-spaces and applications}, Trans. Amer. Math. Soc.
\textbf{362} (2010), no.~4, 2125--2165. \MR{2574890}

\bibitem[HLP88]{HLP29}
G.~H. Hardy, J.~E. Littlewood, and G.~P\'olya, \emph{Inequalities}, Cambridge
Mathematical Library, Cambridge University Press, Cambridge, 1988, Reprint of
the 1952 edition. \MR{944909}

\bibitem[HLW21]{Hong2021}
Guixiang Hong, Ben Liao, and Simeng Wang, \emph{Noncommutative maximal ergodic
	inequalities associated with doubling conditions}, Duke Mathematical Journal
\textbf{170} (2021), no.~2, 205--246. \MR{4202493}

\bibitem[HRW23]{HRW23}
Guixiang Hong, Samya~Kumar Ray, and Simeng Wang, \emph{Maximal ergodic
	inequalities for some positive operators on noncommutative {$L_p$}-spaces},
J. Lond. Math. Soc. (2) \textbf{108} (2023), no.~1, 362--408. \MR{4611833}

\bibitem[HS08]{HS08}
Gra\.zyna Horbaczewska and Adam Skalski, \emph{The {B}anach principle for ideal
	convergence in the classical and noncommutative context}, J. Math. Anal.
Appl. \textbf{342} (2008), no.~2, 1332--1341. \MR{2445279}

\bibitem[Hu08]{Hu08}
Ying Hu, \emph{Maximal ergodic theorems for some group actions}, Journal of
Functional Analysis \textbf{254} (2008), no.~5, 1282--1306.

\bibitem[Hu09]{Hu09}
\bysame, \emph{Noncommutative extrapolation theorems and applications},
Illinois J. Math. \textbf{53} (2009), no.~2, 463--482. \MR{2594639}

\bibitem[Jun02]{Ju02}
Marius Junge, \emph{Doob's inequality for non-commutative martingales}, J.
Reine Angew. Math. \textbf{549} (2002), 149--190. \MR{1916654}

\bibitem[JX07]{JX07}
Marius Junge and Quanhua Xu, \emph{Noncommutative maximal ergodic theorems}, J.
Amer. Math. Soc. \textbf{20} (2007), no.~2, 385--439. \MR{2276775}

\bibitem[K\"78]{Ku78}
Burkhard K\"ummerer, \emph{A non-commutative individual ergodic theorem},
Invent. Math. \textbf{46} (1978), no.~2, 139--145. \MR{482260}

\bibitem[Kre85]{Kr85}
Ulrich Krengel, \emph{Ergodic theorems}, De Gruyter Studies in Mathematics,
vol.~6, Walter de Gruyter \& Co., Berlin, 1985, With a supplement by Antoine
Brunel. \MR{797411}

\bibitem[Lan76]{Lance1976}
E.~Christopher Lance, \emph{Ergodic theorems for convex sets and operator
	algebras}, Inventiones Mathematicae \textbf{37} (1976), no.~3, 201--214.
\MR{428060}

\bibitem[Lin01]{Lindenstrauss2001}
Elon Lindenstrauss, \emph{Pointwise theorems for amenable groups}, Inventiones
Mathematicae \textbf{146} (2001), no.~2, 259--295. \MR{1865397}

\bibitem[Lit12]{Li12}
Semyon Litvinov, \emph{Uniform equicontinuity of sequences of measurable
	operators and non-commutative ergodic theorems}, Proc. Amer. Math. Soc.
\textbf{140} (2012), no.~7, 2401--2409. \MR{2898702}

\bibitem[LM01]{LM01}
Semyon Litvinov and Farrukh Mukhamedov, \emph{On individual subsequential
	ergodic theorem in von {N}eumann algebras}, Studia Math. \textbf{145} (2001),
no.~1, 55--62. \MR{1828992}

\bibitem[Lor39]{Lo39}
Edgar~R. Lorch, \emph{Means of iterated transformations in reflexive vector
	spaces}, Bull. Amer. Math. Soc. \textbf{45} (1939), 945--947. \MR{1460}

\bibitem[Nev94]{Ne94}
Amos Nevo, \emph{Harmonic analysis and pointwise ergodic theorems for
	noncommuting transformations}, J. Amer. Math. Soc. \textbf{7} (1994), no.~4,
875--902. \MR{1266737}

\bibitem[NS94]{NS94}
Amos Nevo and Elias~M. Stein, \emph{A generalization of {B}irkhoff's pointwise
	ergodic theorem}, Acta Math. \textbf{173} (1994), no.~1, 135--154.
\MR{1294672}

\bibitem[Ose65]{Os65}
V.~I. Oseledec, \emph{Markov chains, skew products and ergodic theorems for
	``general''\ dynamic systems}, Teor. Verojatnost. i Primenen. \textbf{10}
(1965), 551--557. \MR{189123}

\bibitem[Pau02]{Pa02}
Vern Paulsen, \emph{Completely bounded maps and operator algebras}, Cambridge
Studies in Advanced Mathematics, vol.~78, Cambridge University Press,
Cambridge, 2002. \MR{1976867}

\bibitem[Rot62]{Ro62}
Gian-Carlo Rota, \emph{An ``{A}lternierende {V}erfahren'' for general positive
	operators}, Bull. Amer. Math. Soc. \textbf{68} (1962), 95--102. \MR{133847}

\bibitem[Saa20]{St20}
\c~Serban~Valentin Str\u~atil\u a, \emph{Modular theory in operator algebras},
second ed., Cambridge-IISc Series, Cambridge University Press, Delhi, 2020.
\MR{4411372}

\bibitem[SaaZ79]{SZ19}
\c~Serban Str\u~atil\u a and L\'aszl\'o Zsid\'o, \emph{Lectures on von
	{N}eumann algebras}, Editura Academiei, Bucharest; Abacus Press, Tunbridge
Wells, 1979, Revision of the 1975 original, Translated from the Romanian by
Silviu Teleman. \MR{526399}

\bibitem[Tak03]{Ta03}
M.~Takesaki, \emph{Theory of operator algebras. {II}}, Encyclopaedia of
Mathematical Sciences, vol. 125, Springer-Verlag, Berlin, 2003, Operator
Algebras and Non-commutative Geometry, 6. \MR{1943006}

\bibitem[Ter81]{Te81}
M.~Terp, \emph{{$L_p$} spaces associated with von neumann algebras}, Notes,
Math. Institute, Copenhagen Univ. (1981).

\bibitem[Wal97]{Walk:1997}
Trent~E. Walker, \emph{Ergodic theorems for free group actions on von {N}eumann
	algebras}, J. Funct. Anal. \textbf{150} (1997), no.~1, 27--47. \MR{1473624}

\bibitem[Wat88]{Wa88}
Keiichi Watanabe, \emph{Dual of noncommutative {$L^p$}-spaces with {$0<p<1$}},
Math. Proc. Cambridge Philos. Soc. \textbf{103} (1988), no.~3, 503--509.
\MR{932675}

\bibitem[Yea77]{Yeadon1977}
F.~J. Yeadon, \emph{Ergodic theorems for semifinite von {N}eumann algebras.
	{I}}, Journal of the London Mathematical Society. Second Series \textbf{16}
(1977), no.~2, 326--332. \MR{487482}

\bibitem[Yea80]{Yeadon1980}
\bysame, \emph{Ergodic theorems for semifinite von {N}eumann algebras. {II}},
Mathematical Proceedings of the Cambridge Philosophical Society \textbf{88}
(1980), no.~1, 135--147. \MR{569639}

\end{thebibliography}


\bibliographystyle{amsalpha}

\end{document}